\let\oldtocsection=\tocsection
\let\oldtocsubsection=\tocsubsection
\let\oldtocsubsubsection=\tocsubsubsection
\renewcommand{\tocsection}[2]{\hspace{0em}\oldtocsection{#1}{#2}}
\renewcommand{\tocsubsection}[2]{\hspace{2em}\oldtocsubsection{#1}{#2}}
\renewcommand{\tocsubsubsection}[2]{\hspace{4em}\oldtocsubsubsection{#1}{#2}}
\newtheorem{thm}{Theorem}[section]
\newtheorem{prop}[thm]{Proposition}
\newtheorem{lemma}[thm]{Lemma}
\newtheorem{cor}[thm]{Corollary}
\newtheorem{conj}[thm]{Conjecture}
 \newtheorem{defn}[thm]{Definition}
\newtheorem{ques}[thm]{Question}
\newtheorem*{ques*}{Question}
\theoremstyle{remark}
\newtheorem{remark}[thm]{Remark}
\newtheorem{exa}[thm]{Example}
\numberwithin{equation}{section}
\def\imod#1{\allowbreak\mkern5mu{\operator@font mod}\,\,#1}
\def\makeop#1{\expandafter\def\csname#1\endcsname
  {\mathop{\rm #1}\nolimits}\ignorespaces}
\DeclareMathOperator{\res}{Res}
\newcommand{\one}{{\mathbb 1}}
\newcommand{\Z}{\mathbb Z}
\newcommand{\Q}{\mathbb Q}
\newcommand{\C}{\mathbb C}
                  \newcommand{\G}{\mathbb G}
\newcommand{\OO}{\mathcal O}
\DeclareMathOperator{\spec}{Spec}
 \newcommand{\rat}{\mathbb Q}
\newcommand{\integ}{\mathbb Z}
\newcommand{\real}{\mathbb R}
\newcommand{\cx}{\mathbb C}
\newcommand{\gp}{\mathbb G}
\newcommand{\ff}{\mathbb F}
\newcommand{\ang}[1]{{{\langle #1 \rangle}}}
\def\inv{^{-1}}
\def\units{^\times}
\def\ab{{\rm ab}}
\DeclareMathOperator{\aut}{Aut}
\DeclareMathOperator{\gal}{Gal}
\def\iso{\cong}
\DeclareMathOperator{\GL}{GL}
\DeclareMathOperator{\gu}{GU}
\DeclareMathOperator{\su}{SU}
\DeclareMathOperator{\fix}{Fix}
\DeclareMathOperator{\mt}{MT}
\DeclareMathOperator{\smt}{sMT}
\DeclareMathOperator{\U}{U}
\def\iso{\cong}
\def\til{\widetilde}
\def\integ{{\mathbb Z}}
\def\cx{{\mathbb C}}
\def\rat{{\mathbb Q}}
\def\gp{{\mathbb G}}
\def\ff{{\mathbb F}}
\def\ELL{{\mathbb L}}
\renewcommand{\bar}[1]{{\overline{#1}}}
\newcommand{\set}[1]{\left\{#1\right\}}
\title{On the essential torsion finiteness of abelian varieties over torsion fields}
\author{Jeffrey D. Achter}
\address{Colorado State University, Department of Mathematics,
 Fort Collins, CO 80523,
 USA}
\email{j.achter@colostate.edu}
\author{Lian Duan}
\address{ShanghaiTech University, Institute of Mathematical Sciences, No.393 Middle Huaxia Road, Pudong New District, Shanghai 201210, China}
\email{duanlian@shanghaitech.edu.cn}
\author{Xiyuan Wang}
\address{The Ohio State University, Department of Mathematics,
100 Math Tower, 231 W 18th Ave, Columbus, OH 43210, USA}
\email{wang.15476@osu.edu}
\thanks{Research of the first-named author was supported in part by a grant fom the Simons Foundation (637075).}
\begin{document}

\begin{abstract}
    The classical Mordell-Weil theorem implies that an abelian variety $A$ over a number field $K$ has only finitely many $K$-rational torsion points. This finitude of torsion still holds even over the cyclotomic extension $K^{\cyc}=K\Q^{\mathrm{ab}}$ by a result of Ribet. In this article, we consider the finiteness of torsion points of an abelian variety $A$ over the infinite algebraic extension $K_B$ obtained by adjoining the coordinates of all torsion points of an abelian variety $B$. Assuming the Mumford-Tate conjecture, and up to a finite extension of the base field $K$, we give a necessary and sufficient condition for the finiteness of $A(K_B)_{\rm tors}$ in terms of Mumford--Tate groups. 
              We give a complete answer when both abelian varieties have dimension at most three, or when both have complex multiplication.
    \end{abstract}

\maketitle

\tableofcontents

\section{Introduction}

Suppose $A$ is an abelian variety defined over a number field $K$.  The celebrated Mordell-Weil theorem states that for any number field $L$ containing $K$, the subgroup $A(L)_{\tors}$ of torsion points of $A$ defined over $L$ is finite (e.g., \cite[Appendix II]{Mumford-AV}). At the opposite extreme, over the algebraic closure $\overline{K}$ of $K$, using the geometry of $A$ one easily sees that the geometric torsion group $A(\overline{K})_{\tors}$ is infinite. Then it is natural to ask whether the finiteness property of the torsion subgroup $A(L)_{\tors}$ is still preserved for various infinite algebraic extensions $L/K$. This kind of question can be traced back at least to \cite{Maz72}, in which Mazur asked whether the group $A(K^{\cyc,p})$, where $K^{\cyc,p} = K(\cup_n \zeta_{p^n})$ is the field obtained by adjoining all $p$-power roots of unity to $K$, is still finitely generated. The torsion part $A(K^{\cyc,p})_{\rm tors}$ of this group is then proved to be finite by Imai \cite{Imai75} and Serre \cite{Serre-l-adic-repn} independently. Their results are then generalized by Ribet in his article \cite[Appendix]{Katz-Lang-Ribet-finite-thm-geo-CFT}. Let $K^{\rm{cyc}}:= \cup_p K^{\cyc,p} = K(\cup_{n} \zeta_n)$ be the infinite extension of $K$ obtained by adjoining all roots of unity.  Then Ribet showed that for every abelian variety $A$ defined over the number field $K$,  one has 
\begin{equation}\label{Eqn: Rebit_torsion_cyclic_finite}
	|A(K^{\rm{cyc}})_{\rm{tors}}|<\infty.
\end{equation} 
Zarhin then further generalized this result \cite{Zarhin-End-Tor-AV} by showing that if $A$ is a simple abelian variety over its ground field $K$, then over the maximal abelian extension $K^{\rm ab}$ of $K$, the torsion group $A(K^{\rm ab})_{\rm tors}$ is finite if and only if $A$ is not of CM-type over $K$, i.e., if and only if the $K$-endomorphism algebra $\End(A)_\rat := \End(A)\otimes_{\Z}\Q$ is not a number field of degree $2\dim A$. As a cohomological generalization of Ribet's result, R\"{o}ssler and Szamuely \cite{Rossler-Szamuely-torsion-finite} proved that for any projective, smooth, and geometrically connected variety $X$ over a number field $K$, the groups $H^i_{\text{\'et}}(\overline{X}, \rat/\integ (j))^{\Gal(\overline{K}/K^{\rm cyc})}$ are finite for all odd positive integers $i$ and all integers $j$. In contrast, when $K$ is a $p$-adic field, then the analog of Imai and Serre's result is generalized by Ozeki in \cite{Ozeki09}. In addition, an analogue of Zarhin's result is proved for Drinfeld modules by Li \cite{Li-DM-torsion-finite}.  Quite recently, Lombardo studied a problem which, while perhaps superficially different, turns out to be closely related \cite{Lombardo-iso-Kummerian}; we discuss his work at the end of this introduction. 

In this paper, we focus on the generalization of \eqref{Eqn: Rebit_torsion_cyclic_finite} in another direction. Notice that by the Kronecker–Weber theorem, the cyclotomic extension $K^{\cyc} =K({\G}_{m, {\rm tors}})$ is exactly the extension of $K$ obtained by adjoining all the geometric torsion points of the algebraic torus $\G_{m}$. Due to the fact that there is no nontrivial isogeny, or even nonconstant geometric morphism, between $\G_m$ and an abelian variety, one is naturally led to ask the following question: 

\begin{ques}\label{Question: main_question}
Suppose two abelian varieties $A$ and $B$ are defined over a number field $K$; assume that over $\bar K$ they share no common nontrivial isogeny factor. Let $K_{B}$ denote the infinite extension of $K$ obtained by adjoining all the geometric torsion points of $B$. Is the torsion group of $A$ over $K_B$  finite, i.e., is
\[
|A(K_B)_{\rm tors}|<\infty?
\]
\end{ques}

We answer this question in the present paper, up to a finite extension of the base field and under the Mumford--Tate conjecture.  We state our results after introducing a few definitions.

\begin{defn}\label{Defn: ess_tor_fini}
Given two abelian varieties $A$ and $B$ defined over a number field $K$, we say that $A$ is \emph{torsion finite} for $B$ over $K$ if $A(K_B)_{\tors}$ is
finite. Otherwise, we say that $A$ is \emph{torsion infinite} for $B$ over $K$. 

Moreover, if there is a finite extension $L/K$ such that $A(L_B)_{\tors}$ is infinite, we say $A$ is \emph{potentially torsion infinite} for $B$. If such $L$ does not exist, we will say that $A$ is \emph{essentially torsion finite} for $B$. 

\end{defn}

Although not stated in this language, Serre gave a positive answer to Question~\ref{Question: main_question} in \cite[Th\'eorem\`e 6 and 7]{Serre-1972} when $A$ and $B$ are both elliptic curves which are not geometrically isogenous. In fact, Serre proved that for such $A$ and $B$ the image of the adelic representation induced by $A\times B$ equals the product of the images induced by $A$ and $B$, up to a finite index; the claim readily follows. Our strategy is inspired by Serre's work. However, one of the advantages of working with elliptic curves, as opposed to higher dimensional abelian varieties, is the open image theorem \cite{Serre-l-adic-repn,Serre-1972}. This theorem, together with its analogue for CM elliptic curves \cite{Serre-Tate-good-red-AV}, classifies the $\ell$-adic representation images of elliptic curves in terms of  $\rat$-algebraic groups. With the help of these algebraic groups, the answer to Question~\ref{Question: main_question} is essentially (but nontrivially) a consequence of Goursat's lemma. 

 The open image theorem for higher-dimensional abelian varieties is not known in general -- indeed, it cannot hold for an abelian variety which is not Hodge maximal.
 Nonetheless, the Mumford--Tate conjecture claims that, for an abelian variety $A$ over a sufficiently large number field, its $\ell$-adic Galois representation images are still classified by a $\rat$-algebraic group, the Mumford--Tate group $\mt(A)$, which is defined in terms of the Hodge structure  $H_1(A(\C), \rat)$. (For a quick review of the Mumford--Tate group and the related conjecture, see Section~\ref{Sect: MT_conj}.  For an abelian variety $A$ defined over a subfield $K\subset \cx$, we will often abuse notation and write $H_1(A,\rat)$ for the homology group $H_1((A\times_{\spec K} \spec \cx)(\cx)^{\mathrm{an}},\rat)$, endowed with its Hodge structure, and define $H^1(A,\rat)$ in an analogous fashion.)

In this article, assuming the Mumford--Tate conjecture, using Galois theory, and generalizing Serre's idea to algebraic groups beyond $\GL_2$, we are able to prove a criterion for the essential torsion finiteness of pairs of abelian varieties. 
 
\begin{thm}[see Theorem~\ref{T:A abs simp} for a more detailed version]\label{Thm: main_thm_2}Suppose $A$ and $B$ are two  absolutely simple abelian varieties defined over a number field $K$, and suppose that the Mumford--Tate conjecture holds for $A$ and $B$. 
 
Then $A$ is potentially torsion infinite for $B$ if and only if 
\begin{equation}
\label{E:dim MT}
    \dim \mt(A\times B) = \dim \mt(B).
\end{equation}
When this holds, for each prime $\ell$ there exists a finite extension $L_\ell/K$ such that
\[
A[\ell^\infty](L_{\ell, B}) = A_{\ell^\infty}:= A[\ell^\infty](\bar K).
\]
\end{thm}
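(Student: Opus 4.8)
The statement to prove is an equivalence — $A$ potentially torsion infinite for $B$ iff $\dim\mt(A\times B)=\dim\mt(B)$ — together with the stronger conclusion that when this holds, for each prime $\ell$ a finite extension of $K$ already captures all of $A[\ell^\infty]$ inside $K_B$. The natural framework is to translate everything into $\ell$-adic Galois images. Let me think about how to organize this.

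First, the $\ell$-adic picture. Fix $\ell$. Over a suitable finite extension of $K$ the Mumford–Tate conjecture (assumed for $A$ and $B$, hence also for $A\times B$ since $\mt$ of a product is a subgroup of the product of the $\mt$'s, and the MT conjecture for a product follows) says the Zariski closure of the image $G_{A\times B,\ell}$ of $\gal(\bar K/K)$ acting on $V_\ell(A\times B)$ is $\mt(A\times B)\otimes\Q_\ell$, and likewise for $A$ and for $B$ separately. So $G_{A\times B,\ell}\subset G_{A,\ell}\times G_{B,\ell}$ is an open subgroup (after a finite extension, a finite-index subgroup of the product of the Lie-algebra images). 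The condition $A(K_B)_{\tors}$ being infinite means that, for infinitely many $\ell$ — or for a single $\ell$, if we're content with the "$[\ell^\infty]$" refinement — the field $K_B$, which is the fixed field of $\ker(\gal(\bar K/K)\to \aut T(B))$, allows infinitely many $A$-torsion points to be rational. Passing to a single $\ell$: $A[\ell^\infty](K_B)$ is infinite iff the image of $\gal(\bar K/K_{B})$ in $\aut V_\ell(A)$ fails to be open (equivalently, has strictly smaller Lie algebra), because a closed subgroup of $\GL$ with the same Lie algebra as the full image has only finitely many fixed points on $V_\ell/T_\ell$.

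The Lie-algebra / Goursat mechanism. Let $\mathfrak g_A,\mathfrak g_B,\mathfrak g_{AB}$ be the Lie algebras of the $\ell$-adic images (over $\bar\Q_\ell$ after base change, these are the Lie algebras of the Mumford–Tate groups). We have $\mathfrak g_{AB}\subset\mathfrak g_A\oplus\mathfrak g_B$, and the two projections are surjective. The group $\gal(\bar K/K_B)$ is, up to finite index, the stabilizer of the situation where the $B$-component is trivial — more precisely, the Lie algebra of the image of $\gal(\bar K/K_B)$ acting on $V_\ell(A)$ is $\mathfrak g_{AB}\cap(\mathfrak g_A\oplus 0)$, the kernel of the projection $\mathfrak g_{AB}\to\mathfrak g_B$. (This requires care: $K_B$ is the field cut out by the torsion of $B$, not just the $\ell$-power torsion; but since we only want $A[\ell^\infty]$, it suffices to work with the field cut out by $T_\ell(B)$, which sits inside $K_B$, so the relevant group $\gal(\bar K/K_B)$ is contained in the kernel on $V_\ell(B)$, and after a finite extension we may assume it equals that kernel — this is where a finite extension $L_\ell/K$ gets introduced and where one uses that the finitely-many "extra" primes' torsion of $B$ is handled by enlarging $K$.) Then $A[\ell^\infty](K_B)$ is infinite iff $\mathfrak g_{AB}\cap(\mathfrak g_A\oplus 0)\subsetneq \mathfrak g_A\oplus 0$, i.e., iff the projection $\mathfrak g_{AB}\to\mathfrak g_B$ is an isomorphism (not merely surjective), i.e., iff $\dim\mathfrak g_{AB}=\dim\mathfrak g_B$, i.e., iff $\dim\mt(A\times B)=\dim\mt(B)$. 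In that case $\gal(\bar K/K_B)$ acts through a finite quotient on $V_\ell(A)$, so after passing to the finite extension $L_\ell$ that trivializes this quotient we get $A[\ell^\infty](L_{\ell,B})=A_{\ell^\infty}$, exactly the claimed refinement. Conversely if $\dim\mt(A\times B)>\dim\mt(B)$ then $\mathfrak g_{AB}\cap(\mathfrak g_A\oplus 0)$ is a nonzero algebraic Lie algebra, so for \emph{every} finite extension $L$ the image of $\gal(\bar K/L_B)$ in $\GL(V_\ell(A))$ is Zariski-dense in a positive-dimensional group, hence has finitely many fixed points on $A[\ell^\infty]$, and thus $A$ is not potentially torsion infinite. (One must sum over $\ell$, or at least observe that for fixed $\ell$ the "torsion infinite via $\ell$" and "$\dim$" conditions match; combining the two directions over all $\ell$ gives the full equivalence as stated.)

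**The main obstacle.** The genuinely delicate point is the passage between the \emph{arithmetic} object $K_B$ — whose Galois group is the image in $\aut T(B)=\prod_\ell\aut T_\ell(B)$, a group that can be much smaller than $\prod_\ell(\text{image in }\aut T_\ell(B))$ — and the \emph{Lie-theoretic}/Mumford–Tate picture, which is naturally $\ell$-by-$\ell$. Making the Goursat argument rigorous requires knowing that the adelic image of $\gal(\bar K/K)$ on $A\times B$ is, up to finite index, the product over $\ell$ of the $\ell$-adic images, \emph{and} controlling the "fibered product over finite groups" part — this is precisely the subtlety Serre had to confront for $\GL_2$ and that the paper flags as "essentially (but nontrivially) a consequence of Goursat's lemma." Concretely, one needs: (i) independence of the $\ell$-adic representations of a product of abelian varieties after a finite base extension (a theorem, e.g., along the lines of work building on Serre and on Hindry–Ratazzi/Commelin, or deducible from the MT conjecture inputs together with Faltings/Bogomolov), and (ii) that the finite "glueing" quotients don't conspire to keep infinitely many $A$-torsion points rational over $K_B$ — but this second worry is exactly why the theorem is phrased with "\emph{potentially}" torsion infinite and why the refined conclusion only asks for $A[\ell^\infty]$ after a finite extension $L_\ell$ depending on $\ell$: enlarging $K$ kills any fixed finite glueing data. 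So after those preparations, the proof is: (1) reduce mod a finite extension to make $\mt=$ Lie image ($\ell$-adically) and to make the adelic image a product up to finite index; (2) identify $\gal(\bar K/K_B)$'s image on $V_\ell(A)$ with $\ker(\mathfrak g_{AB}\twoheadrightarrow\mathfrak g_B)$ up to finite index; (3) apply the $\GL$-fact that finitely-many-fixed-points is equivalent to positive-dimensional Zariski closure; (4) read off the dimension equivalence. I expect step (1), the independence/adelic-product input, to be where the real work (and the dependence on prior literature) sits.
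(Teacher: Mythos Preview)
Your proposal has two genuine gaps, and the first is precisely the difficulty the paper flags as its main technical contribution.

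\textbf{The serious gap: varying $\ell$.} In arguing the contrapositive of the hard direction, you show that if $\dim\mt(A\times B)>\dim\mt(B)$ then for each prime $\ell$ the image of $\gal(\bar K/L_B)$ on $V_\ell(A)$ has positive-dimensional Zariski closure, and conclude that $A[\ell^\infty](L_B)$ is finite. Even granting this, it does \emph{not} follow that $A(L_B)_{\tors}$ is finite: one could have $A[\ell](L_B)\neq 0$ for infinitely many primes $\ell$, each contributing only finitely many points. Your parenthetical (``sum over $\ell$'') and your suggestion that the word ``potentially'' absorbs this are both mistaken---passing to a finite extension of $K$ changes the $\ell$-adic images only by finite index and says nothing about mod-$\ell$ fixed vectors as $\ell$ varies. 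The paper handles this via the Cadoret--Moonen/Hindry--Ratazzi theorem (Theorem~\ref{Thm: Cadoret_Moonen}) that $[G_{A\times B}(\ff_\ell):\Gamma_{A\times B/K,\ell}]$ is bounded independently of $\ell$, together with the ``bounded subgroups'' machinery of \S\ref{Sect: repn_alg_gp}: if a subgroup of $H_{A,B}(\ff_\ell)$ of uniformly bounded index fixes a nonzero vector in $A[\ell]$ for infinitely many $\ell$, then $H_{A,B}^\circ$ already fixes a nonzero vector in $H_1(A,\rat)$. This passage from mod-$\ell$ data for varying $\ell$ to a statement about the $\rat$-group is the real work, and your Lie-algebra argument for a single $\ell$ does not touch it.

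\textbf{The smaller gap: absolute simplicity.} Your chain ``$A[\ell^\infty](K_B)$ infinite $\iff$ $\mathfrak h:=\mathfrak g_{AB}\cap(\mathfrak g_A\oplus 0)\subsetneq\mathfrak g_A$ $\iff$ $\mathfrak g_{AB}\to\mathfrak g_B$ is an isomorphism'' is unjustified on both arrows. A proper ideal of $\mathfrak g_A$ can act without fixed vectors (e.g.\ $\mathfrak{sl}\subset\mathfrak{gl}$), and a proper ideal need not be zero. What makes the argument work is that $H_{A,B}^\circ$ is \emph{normal} in $\mt(A)$, so $H_1(A,\rat)^{H_{A,B}^\circ}$ is an $\mt(A)$-subrepresentation; since $A$ is absolutely simple, $H_1(A,\rat)$ is an irreducible $\mt(A)$-representation, so this fixed space is either $0$ or everything. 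A nonzero fixed vector then forces $H_{A,B}^\circ$ to act trivially and hence to be zero-dimensional. You never invoke absolute simplicity, and without it the implication fails.
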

Insofar as (the image of) the action of Galois on torsion points is constrained by the Mumford--Tate group, it is not surprising that a relation on Mumford--Tate groups can force a resonance among torsion fields.  What is perhaps more interesting is that just the presence of infinite torsion -- for example, if $A[\ell](K_B)$ is nontrivial for an infinite, but sparse, set of primes -- is enough to constrain the relation between $\mt(A\times B)$ and $\mt(B)$.  In particular, we will see that the existence of $\ell$-torsion for infinitely many $\ell$ forces the presence of $\ell$-torsion for $\ell$ in a set of positive density.

The Mumford--Tate group $\mt(A)$ is canonically an extension of $\G_m$ by the Hodge group, or special Mumford--Tate group, $\smt(A)$.  Ichikawa \cite{Ichikawa-Hg-gp-AV} and Lombardo \cite{Lombardo-Hg-gp-pair-AV-2016} have investigated conditions under which 
\begin{equation}
\label{E:smt is product}
\smt(A\times B) = \smt(A) \times \smt(B).
\end{equation}
(For example, this holds if $A$ and $B$ satisfy a certain "odd relative dimension" condition and at least one is  \emph{not} of Type IV in the Albert classification.)  When $A$ and $B$ satisfy \eqref{E:smt is product}, we have
   \[
\dim \mt(A\times B) = \dim(\mt(A))+\dim(\mt(B))-1 > \max\set{\dim \mt(A), \dim \mt(B)}.
\]
Theorem \ref{Thm: main_thm_2} then immediately implies that $A$ and $B$ are mutually essentially torsion finite. (See the last part of Section~\ref{Sect: potentially-infinite} for more details.) 

Taken together with our main theorem, Ichikawa and Lombardo's results often imply a positive answer to our main question \ref{Question: main_question}, except when both $A$ and $B$ are of Type IV. Thus Question \ref{Question: main_question} is particularly interesting 
when both $A$ and $B$ are of Type IV, such as when both $A$ and $B$ have complex multiplication (CM) over $\bar K$.
 
In fact, there do exist examples where Question~\ref{Question: main_question} has a negative answer. For instance, the Jacobian of a certain genus $4$ curve \cite[Example~6.1]{Shioda-av-Fermat-type-1981/82} decomposes into a product of a potentially CM elliptic curve and a simple potentially CM abelian threefold. However, one can check that the elliptic curve is torsion infinite for the threefold (see \cite[Theorem~1.2]{Lombardo-iso-Kummerian}). In addition, Lombardo~\cite{Lombardo-iso-Kummerian} constructed infinitely many pairs of nonisogenous CM abelian varieties for which the answer to Question~\ref{Question: main_question} is again negative.
As a complement to Lombardo's work, we give a sufficient and necessary condition for answering our main question for CM pairs, as follows.

Let $A$ be an isotypic abelian variety over a number field $K$ with CM by a CM field $E$, and suppose that $K$ contains $E$. (Recall that an abelian variety is said to be isotypic if it is isogenous to some power of a simple abelian variety.) We will see in Section \ref{Sect: CM} that there is a surjection of algebraic tori
\[
\xymatrix{
T^K := \Res_{K/\rat}\G_m \ar@{->>}[r] & \mt(A)
}
\]
which induces an inclusion of character groups
\[
\xymatrix{
X^*(\mt(A)) \ar@{^(->}[r] & X^*(T^K).
}
\]
In fact, $\mt(A)$ only depends on the CM type of $A$.
With this reminder, we can state a version of our main theorem for abelian varieties with complex multiplication. For a torus $T$, let $X^*(T)_\rat = X^*(T)\otimes \rat$.

\begin{thm}[see Theorem~\ref{Thm: main_thm_CM_pair} for a more detailed version]\label{Thm: main_thm_3}
    Let $A_1$ and $A_2$ be isotypic potentially CM abelian varieties over a sufficiently large number field $K$, with respective Mumford--Tate groups $T_1$ and $T_2$.  Using the inclusions $X^*(T_i) \hookrightarrow X^*(T^K)$, either:
    \begin{enumerate}[label=(\alph*)]
    \item $X^*(T_1)_\rat \subset X^*(T_2)_\rat$.  Then $A_1$ is potentially torsion infinite for $A_2$.
    
    Moreover, if $X^*(T_1) \subset X^*(T_2)$ and if $A_1$ is simple with nondegenerate CM (\ref{S:nondegenerate}), then $A_1(K_{A_2})_{\tors} = A_1(\bar K)_{\tors}$.
    
    \item $X^*(T_1)_\rat \not\subset X^*(T_2)_\rat$.  Then $A_1$ is essentially torsion finite for $A_2$.
    \end{enumerate}
\end{thm}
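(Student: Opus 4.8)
The plan is to derive Theorem~\ref{Thm: main_thm_3} from the dimension criterion already proved in Theorem~\ref{Thm: main_thm_2} (in the precise form of Theorem~\ref{T:A abs simp}). That criterion applies unconditionally in the present setting: a simple abelian variety with potential CM is absolutely simple, and the Mumford--Tate conjecture is known for CM abelian varieties --- hence also for a product such as $A_1\times A_2$ --- because in that case the $\ell$-adic monodromy groups coincide with the Mumford--Tate tori. So first I would reduce to the simple case. If $A_i$ is isogenous to $B_i^{m_i}$ with $B_i$ simple, then $\mt(A_i)=\mt(B_i)=T_i$ (tensoring with a trivial Hodge structure does not change the image of the Deligne torus), the Galois modules $A_i[n]$ and $B_i[n]^{\oplus m_i}$ generate the same extension of $K$ inside $\bar K$, so $K_{A_i}=K_{B_i}$, and therefore $A_1(K_{A_2})_{\tors}$ is finite --- respectively, equals $A_1(\bar K)_{\tors}$ --- if and only if the corresponding statement holds with $B_1$ and $B_2$ in place of $A_1$ and $A_2$. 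After this reduction $A_1$ and $A_2$ may be assumed simple.

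The next step is to translate the dimension criterion into the character-lattice condition. Put $T_{12}:=\mt(A_1\times A_2)$. The canonical surjections fit into $T^K\twoheadrightarrow T_{12}\twoheadrightarrow T_i$, compatibly with $T^K\twoheadrightarrow T_i$, and $T_{12}\hookrightarrow T_1\times T_2$ is a closed immersion; chasing these through character lattices inside $X^*(T^K)$ shows that $X^*(T_{12})=X^*(T_1)+X^*(T_2)$ there. Hence $\dim T_{12}=\rank\bigl(X^*(T_1)+X^*(T_2)\bigr)$ while $\dim T_2=\rank X^*(T_2)$, and since $T_{12}\twoheadrightarrow T_2$ forces $\dim T_{12}\ge\dim T_2$ we get
\[
\dim\mt(A_1\times A_2)=\dim\mt(A_2)\iff X^*(T_1)_\rat\subset X^*(T_2)_\rat .
\]
By Theorem~\ref{T:A abs simp} the left-hand side is exactly the condition that $A_1$ be potentially torsion infinite for $A_2$. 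This proves the first assertions of both~(a) and~(b), and shows the criterion is necessary and sufficient.

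For the sharper statement in~(a) I would argue at the level of torsion fields. Assume now $X^*(T_1)\subset X^*(T_2)$ as sublattices of $X^*(T^K)$ and that $A_1$ is simple with nondegenerate CM. Then the surjection $T^K\twoheadrightarrow T_1$ factors as $T^K\twoheadrightarrow T_2\overset{q}{\twoheadrightarrow} T_1$, where $q$ is the (surjective) $\rat$-morphism realizing $X^*(T_1)\hookrightarrow X^*(T_2)$. By the Main Theorem of complex multiplication, recalled in Section~\ref{Sect: CM}, for $K$ sufficiently large the Galois action on $A_i(\bar K)_{\tors}$ is the composition of a fixed reciprocity map $\Gal(\bar K/K)\to T^K(\A_f)$ with the projection $T^K(\A_f)\to T_i(\A_f)$; hence $\rho_{A_1}=q\circ\rho_{A_2}$, so $\ker\rho_{A_2}\subset\ker\rho_{A_1}$ and therefore $K_{A_1}\subset K_{A_2}$. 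This gives $A_1(\bar K)_{\tors}=A_1(K_{A_1})_{\tors}\subset A_1(K_{A_2})_{\tors}\subset A_1(\bar K)_{\tors}$, whence equality. The nondegeneracy and simplicity hypotheses are needed here to make this integral argument legitimate: they guarantee that the passage between $X^*(T_i)$ and $X^*(T_i)_\rat$ loses nothing (the lattices are saturated) and that the $\ell$-adic images are full for almost all $\ell$ after enlarging $K$, so that the factorization $T^K\twoheadrightarrow T_2\twoheadrightarrow T_1$ is compatible with the genuine, not merely Zariski-dense, images of Galois.

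The main obstacle I anticipate is exactly this last point. Theorem~\ref{T:A abs simp} controls torsion fields only up to finite extension, whereas the ``moreover'' in~(a) is the sharp assertion that \emph{every} geometric torsion point of $A_1$ is already defined over $K_{A_2}$. Upgrading the former to the latter requires pinning down the actual image of $\Gal(\bar K/K)$ in $T_i(\A_f)$ and verifying that the character-lattice inclusion $X^*(T_1)\subset X^*(T_2)$ really forces $K_{A_1}\subset K_{A_2}$ rather than mere commensurability --- which is where the explicit class field theory of CM torsion together with the nondegeneracy hypothesis must be brought to bear. The remaining ingredients --- the isotypic reduction, the rank bookkeeping, and the appeal to Theorem~\ref{T:A abs simp} --- are routine.
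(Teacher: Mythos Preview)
Your treatment of the main dichotomy in (a) and (b) matches the paper's: reduce to simple $A_i$ (as in Remark~\ref{R:A isotypic}), invoke Theorem~\ref{T:A abs simp}, and translate the condition $\dim T_{12} = \dim T_2$ into $X^*(T_1)_\rat \subset X^*(T_2)_\rat$ via the computation \eqref{E:HAB for tori reprise} of $X^*(H_{12})$.

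For the ``moreover'' clause your approach has a genuine gap, and it lies where you suspect but for a more concrete reason than you give. The claim that $\rho_{A_i}$ factors as a fixed reciprocity map into $T^K$ followed by the projection $T^K \to T_i$ is not what the Main Theorem of CM delivers: the Serre--Tate formula \eqref{Eqn: Serre-Tate} is $\widetilde\rho_{A_i/K,\ell^\infty}(a) = \varepsilon_i(a)\,N_{K,\Phi_i,\ell}(a_\ell^{-1})$, and the factor $\varepsilon_i$ depends on $A_i$ through its Frobenius elements, not only on the CM type. So $\rho_{A_1} = q \circ \rho_{A_2}$ does not follow; the two representations may differ by a finite-order twist, and that twist is exactly the obstruction you are trying to rule out. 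Your interpretation of the nondegeneracy hypothesis (lattice saturation, surjectivity for almost all $\ell$) is also off the mark.

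The paper's argument for the sharp statement runs instead through the Goursat kernel. The integral inclusion $X^*(T_1) \subset X^*(T_2)$ forces $H_{12}$ to be the \emph{trivial} group scheme (not just finite) by \eqref{E:HAB for tori reprise}. The role of nondegeneracy is then Lemma~\ref{L:MT nondegenerate}: since $\mt(A_1)$ is already a maximal torus in $\GSp_{2\dim A_1}$, the $\ell$-adic monodromy of $A_1$ is automatically connected over any field where its CM is defined --- no enlargement of $K$ is needed on $A_1$'s account. Once the Galois image on $A_1[N]$ sits inside $T_1(\integ/N)$, the image of $\gal(K_{A_2})$ is trapped in $H_{12}(\integ/N) = \{1\}$. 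When $A_1$ has only potential CM, the paper shows the CM is nonetheless defined over $K_{A_2,N}$ for $N \ge 3$: the surjection $T_2 \twoheadrightarrow T_1$ gives an inclusion of reflex fields $E_1^* \subset E_2^*$, and $K_{A_2,N} \supset E_2^*$ by \cite{silverberg-FoD} and \cite[Prop.~7.11]{MilCM} (simplicity of $A_1$ is used here). This reduces to the case already handled.
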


Theorem~\ref{Thm: main_thm_3}  is unconditional because the Mumford--Tate conjecture is known for CM abelian varieties (see Lemma~\ref{Lem: MTC_CM_Ab.var}).

We briefly compare this result to Zarhin's work \cite{Zarhin-End-Tor-AV}.
Suppose $B$ has complex multiplication over $K$ but $A$ does not even have potential complex multiplication.  Note that $K_B$ is an abelian extension of $K$.   Zarhin's result implies that $A(K^{\operatorname{ab}})_{\tors}$ is finite; \emph{a fortiori}, $A$ is essentially torsion finite for $B$.  However, if $A$ is also of CM type, then Theorem~\ref{Thm: main_thm_3} gives finer information on whether $A$ is essentially torsion finite for $B$.

A morphism $A \to B$ induces a map of homology groups $H_1(A,\rat) \to H_1(B,\rat)$, and thus a morphism of Tannakian categories $\ang{H_1(A,\rat)} \to \ang{H_1(B,\rat)}$ and ultimately of Mumford--Tate groups $\mt(B) \to \mt(A)$.  More generally, a correspondence between $A^m$ and $B^n$ induces a relation between $\mt(A)$ and $\mt(B)$; and the class of such a correspondence is a Hodge class on $A^m \times B^n$.  

In Section~\ref{Sect: extra cycles}, we will see that if the CM abelian variety $A$ is torsion infinite for the CM abelian variety $B$, then there is a nonempty $\rat$-vector space of interesting Hodge classes on some product $A^m \times B^n$; perhaps not surprisingly, we call such a class a torsion infinite class.  These Hodge classes are "extra", in the sense that they are not in the span of classes pulled back from $A^m$ and $B^n$.  Conversely, we show that the presence of such a class implies that $A$ is torsion infinite for $B$.

Of course, the Hodge conjecture predicts that torsion infinite classes are actually the classes of cycles on $A^m \times B^n$.  It would be interesting to see, even in special cases, if one can geometrically realize torsion infinite classes.

In addition to the above applications, thanks to the work of Moonen and Zarhin on the Hodge groups of abelian varieties of low dimension  \cite{Moonen-Zarhin-Hg-cls-av-low-dim}, one can compare the Mumford--Tate groups of every possible pair of absolutely simple abelian varieties up to dimension $3$. As a consequence, we give a positive answer to Question~\ref{Question: main_question} for most pairs of such abelian varieties. Precisely, following the classification in their paper, we prove:

\begin{thm}[also Theorem~\ref{Thm: torsion_finite_low_dim}]\label{Thm: main_thm_4}

Suppose $A$ and $B$ are absolutely simple abelian varieties over a common number field and assume that they are  non-isogenous over $\C$. Suppose that $\dim A \le \dim B \le 3$.  Then $A$ and $B$ are mutually essentially torsion finite except for the following cases:
\begin{enumerate}[label=(\alph*)]
\item $A$ is a CM elliptic curve and $B$ is a CM abelian threefold. 
Then $B$ is  essentially torsion finite for $A$; and $A$ is potentially torsion infinite for $B$ exactly when there is an embedding of $\rat$-algebras $\End^0(A)\hookrightarrow \End^0(B)$. 

    \item  $A$ is a CM elliptic curve and $B$ is an abelian threefold of type IV but not CM.  Then  $B$ is  essentially torsion finite for $A$; and $A$ is potentially torsion infinite for $B$ exactly when there is an isomorphism of $\rat$-algebras $\End^0(A)\iso  \End^0(B)$.
    \item $A$ and $B$ are both CM abelian threefolds. 
\end{enumerate}
\end{thm}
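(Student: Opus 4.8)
The strategy is to reduce the theorem to a finite computation in the theory of Mumford--Tate groups. By Theorem~\ref{Thm: main_thm_2}, for absolutely simple non-isogenous $A$ and $B$ with the Mumford--Tate conjecture known, $A$ is potentially torsion infinite for $B$ if and only if $\dim\mt(A\times B)=\dim\mt(B)$; the Mumford--Tate conjecture holds for all abelian varieties of dimension at most $3$ (this is part of the Moonen--Zarhin package one invokes), so the criterion applies unconditionally here. Thus the whole proof becomes: go through Moonen and Zarhin's classification of Hodge groups of absolutely simple abelian varieties of dimension $\le 3$, and for each unordered pair $(A,B)$ with $\dim A\le\dim B\le 3$, decide whether $\dim\smt(A\times B)$ equals $\dim\smt(A)$ or $\dim\smt(B)$ (equivalently, whether $\mt(A\times B)\to\mt(B)$ or $\mt(A\times B)\to\mt(A)$ is an isogeny onto its image of full dimension). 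Since $\mt(A\times B)\subset\mt(A)\times\mt(B)$ surjects onto each factor, the only way to get infinite torsion is for one projection to be an isogeny, and then Theorem~\ref{Thm: main_thm_2} supplies the finite extension making the relevant $\ell^\infty$-torsion rational.

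First I would enumerate the relevant building blocks: an elliptic curve $E$ has $\mt(E)=\GL_2$ if non-CM and $\mt(E)=T^K$-quotient (a $2$-dimensional torus) if CM; an abelian surface is either Hodge-generic ($\smt=\Sp_4$, Type I or II) or of Type IV (CM, $\mt$ a torus of dimension $\le 3$) — but a $2$-dimensional CM abelian variety cannot be simple and arise here with an elliptic curve giving infinite torsion unless the CM fields are compatible; and abelian threefolds split into the Hodge-generic case ($\smt=\Sp_6$, dimension $21$), the non-generic Type I/II cases, the Type III case, and the Type IV cases (both CM, and the non-CM "fake CM" threefolds with $\End^0$ an imaginary quadratic field). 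For each such $B$ and each elliptic curve or surface or threefold $A$, the key inequality $\dim\smt(A\times B)\ge\max\{\dim\smt(A),\dim\smt(B)\}$ always holds, with equality to $\dim\smt(B)$ only in the exceptional Type IV situations listed in (a), (b), (c). In the generic and Type I/II/III cases one checks, using the results of Ichikawa and Lombardo quoted in the introduction (the "odd relative dimension" criterion and the fact that at least one factor is not of Type IV), that $\smt(A\times B)=\smt(A)\times\smt(B)$, so $\dim\mt(A\times B)=\dim\mt(A)+\dim\mt(B)-1>\dim\mt(B)$, forcing essential torsion finiteness by Theorem~\ref{Thm: main_thm_2}.

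The remaining work is the Type IV analysis, which is exactly the content of Theorem~\ref{Thm: main_thm_3} together with the "fake CM" threefold case. When both $A$ and $B$ are CM (case (c), or $A$ a CM elliptic curve and $B$ a CM threefold in case (a)), I would translate the condition $\dim\mt(A\times B)=\dim\mt(B)$ into the character-lattice language of Theorem~\ref{Thm: main_thm_3}: $A$ is potentially torsion infinite for $B$ exactly when $X^*(\mt(A))_\rat\subset X^*(\mt(B))_\rat$ inside $X^*(T^K)_\rat$. For $A$ a CM elliptic curve with $\End^0(A)=F$ an imaginary quadratic field, $X^*(\mt(A))_\rat$ is essentially the $\rat$-span of the CM type of $F$; this sits inside $X^*(\mt(B))_\rat$ precisely when $F$ embeds into $\End^0(B)$ compatibly with the CM structure — giving the clean statement "$\End^0(A)\hookrightarrow\End^0(B)$" in (a). For the non-CM Type IV threefold $B$ in case (b), one has an exact description of $\mt(B)$ (the CM field is degree $2$ over the imaginary quadratic $\End^0(B)$, and $\mt(B)$ is a quotient of $T^K$ cut out by an appropriate reflex condition), and the same lattice comparison shows $A$ (a CM elliptic curve) is potentially torsion infinite for $B$ exactly when $\End^0(A)\iso\End^0(B)$ as $\rat$-algebras — an isomorphism, not just an embedding, because the rank bookkeeping is tight. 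Case (c) I would leave as an open/descriptive item exactly as stated, since a uniform clean criterion requires the full CM machinery of Theorem~\ref{Thm: main_thm_3} and does not collapse to a one-line endomorphism condition.

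The main obstacle is not any single deep input — Theorem~\ref{Thm: main_thm_2}, Moonen--Zarhin, and Ichikawa--Lombardo do the heavy lifting — but rather the bookkeeping: one must be careful that the Moonen--Zarhin classification is of \emph{Hodge} (special Mumford--Tate) groups while Theorem~\ref{Thm: main_thm_2} is phrased in terms of $\mt$, so each "$\smt(A\times B)=\smt(A)\times\smt(B)$" must be upgraded to the correct statement about $\dim\mt$ using the central $\G_m$; and one must verify that in every non-exceptional pair at least one of $A$, $B$ is not of Type IV (or that the relative-dimension hypothesis of Lombardo is met), so that the product decomposition of Hodge groups genuinely applies. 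The one place where genuine case-work bites is distinguishing, among Type III and the various non-generic Type I/II threefolds, which pairs could \emph{a priori} have a coincidence of Mumford--Tate dimensions; here I would argue by a dimension count (the Hodge group of a Hodge-generic threefold has dimension $21$, far larger than anything an elliptic curve or surface contributes, and the non-generic non-Type-IV threefold Hodge groups are still large enough — e.g. Type III gives $\smt$ a form of $\GL_2$ or larger of dimension $\ge 3$ over a quaternion-related base — that no projection from $\smt(A\times B)$ can be an isogeny onto $\smt(B)$ unless $A$ contributes nothing, which an absolutely simple $A$ never does).
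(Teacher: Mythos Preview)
Your overall strategy is right, and it tracks the paper's argument closely: reduce to the Hodge-group classification via Theorem~\ref{Thm: main_thm_2}, invoke Ichikawa--Lombardo and Moonen--Zarhin to get $\smt(A\times B)=\smt(A)\times\smt(B)$ wherever possible, and handle the Type~IV residue separately. But there is a genuine gap.

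You assert that ``in every non-exceptional pair at least one of $A$, $B$ is not of Type IV (or the relative-dimension hypothesis of Lombardo is met)''. This is false. The pair $[(3,\mathrm{IV}(1,1)),(3,\mathrm{IV}(1,1))]$ --- two absolutely simple non-CM Type~IV threefolds, each with $\End^0$ an imaginary quadratic field and $\smt\cong\U_F(3)$ --- is not covered by (a), (b), or (c), yet both factors are Type~IV. Ichikawa--Lombardo does not apply, and Theorem~\ref{Thm: main_thm_3} does not apply since neither is CM. The paper handles this case by a direct argument with unitary groups: assuming $A_1$ is potentially torsion infinite for $A_2$, the isogeny $G_{12}\to G_2$ forces an isomorphism of Lie algebras $\mathfrak{gu}_{F_1}(3)\cong\mathfrak{gu}_{F_2}(3)$, hence $F_1\cong F_2$; then one analyzes the possible structures of $sG_{12}$ (either $\su_F(3)\times\U_F(1)$ or $\U_F(3)$) and shows that in either case the representations $H_1(A_i,\rat)$ are isomorphic as $sG_{12}$-modules, forcing $A_1$ and $A_2$ to be isogenous. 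This is the most delicate step in the whole proof, and your outline skips it entirely. Similarly, the mixed pair $[(3,\mathrm{IV}(1,1)),(3,\mathrm{IV}(3,1))]$ (non-CM Type~IV threefold with a CM threefold) is also both-Type-IV and outside (a)--(c); the paper dispatches it via Moonen--Zarhin's Lemma~3.6 by observing there is no isogeny from the one-dimensional center of $\U_F(3)$ onto the three-dimensional torus $\smt(A_2)$.

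Separately, your treatment of case (b) is based on a misconception: for a non-CM Type~IV threefold $B$, $\mt(B)$ is the unitary similitude group $\gu_F(3)$, not a torus, so there is no ``lattice comparison'' available. The paper instead uses Corollary~\ref{C: TF cm elliptic curve} together with Moonen--Zarhin's Proposition~(3.8): the product formula $\smt(A\times B)=\smt(A)\times\smt(B)$ fails exactly when $\End^0(A)$ embeds in $\End^0(B)$, and since $\End^0(B)$ is itself imaginary quadratic this embedding is necessarily an isomorphism.
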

\noindent (In (c), the essential torsion finiteness depends on the CM-types of $A$ and $B$ as in Theorem~\ref{Thm: main_thm_CM_pair}.)

Again, this result is unconditional since the Mumford--Tate conjecture is known to hold  for simple abelian varieties of dimensions less than $4$ \cite{Moonen-Zarhin-Hg-cls-av-low-dim}.

 This paper is structured as follows.  In Section~\ref{Sect: preliminaries}, we collect some basic results on representations of algebraic groups.  In particular, we introduce the notion of a collection of subgroups of bounded index (of the $\ff_\ell$-points of a group scheme over $\integ[1/N]$); this allows us to infer information about a representation of an algebraic group from data about the behavior of abstract subgroups of its finite-field-valued points.  In Section \ref{Sect: tor_pts_on_ab.vars}, we establish notation and review facts (and conjectures)  concerning the Galois representations attached to abelian varieties.  We finally turn to the torsion-finiteness question itself in Section~\ref{SS:MT pair}, establishing our main result (Theorem~\ref{Thm: main_thm_2}) in Section~\ref{Sect: potentially-infinite}.  The paper concludes with a detailed analysis of CM (\S \ref{Sect: CM}) and low-dimensional (\S \ref{Sect: low dim}) pairs of abelian varieties, and of certain extra Hodge classes which are the hallmark of torsion-infinite pairs of CM abelian varieties (\S \ref{Sect: extra cycles}).

 It turns out that while we were working out these results, Lombardo studied a similar problem with somewhat stronger restrictions \cite{Lombardo-iso-Kummerian}.  Two abelian varieties $A$ and $B$ over a number field $K$ are said to be \emph{strongly iso-Kummerian} if for each positive integer $d$ we have 
\begin{equation}\label{Eqn: iso-Kummerian}
K_{A,d} = K_{B,d}
 \end{equation}
i.e., if the $d$-torsion points of $A$ and $B$ generate the same extension of $K$. Using the theory of the (special) Mumford--Tate group and assuming the Mumford--Tate conjecture, Lombardo proves that condition \eqref{Eqn: iso-Kummerian} puts a strong restriction on the Hodge groups of $A$, $B$ and $A\times B$.  This constraint forces $A$ to have the same isogeny factors as $B$ when either $\dim A\leq 3$ and $\dim B\leq 3$ \cite[Theorem~1.2]{Lombardo-iso-Kummerian}; or every simple factor of $A$ or $B$ has dimension $\leq 2$, is of odd relative dimension and not of type IV \cite[Theorem~1.4]{Lombardo-iso-Kummerian}. As a complement, by studying certain simple CM types on cyclic CM fields, Lombardo also constructs infinitely many non-isogenous iso-Kummerian pairs \cite[Theorem~1.1]{Lombardo-iso-Kummerian}. In spite of the obvious similarities, our work is differs from Lombardo's in its emphasis and results.
\begin{enumerate}
    \item Condition~\eqref{Eqn: iso-Kummerian} is much stronger than our (potentially) torsion infinite condition. In fact, \eqref{Eqn: iso-Kummerian} forces $K_A=K_B$, so $K_AK_B/K_B$ is a trivial extension. However, even if $A$ is torsion-infinite for $B$,  $K_AK_B/K_B$ can still be infinite  (Example~\ref{Exa: torsion_infinite-11}).
    
    \item In assumption~\eqref{Eqn: iso-Kummerian}, by taking $d=\ell^n$ and letting $n\to \infty$, one can directly deduce $A[\ell^{\infty}](K_{B,\ell^{\infty}})=A_{\ell^\infty}$. However, if one only assumes that $A$ is torsion-infinite for $B$, it is possible that the subgroup $A[\ell^{\infty}](K_{B,\ell^{\infty}})$ is finite for every $\ell$, but nontrivial for infinitely many $\ell$. One of our main contributions in this paper is to rule out this possibility (under the Mumford--Tate conjecture, as usual). 

    \item Lombardo shows that if $A$ and $B$ are strongly iso-Kummerian, then the natural projections $\mt(A\times B)\to \mt(A)$ and $\mt(A\times B) \to \mt(B)$ are isogenies \cite[Lemma~3.2]{Lombardo-iso-Kummerian}.  We are able to deduce this conclusion from the weaker hypothesis that $A$ and $B$ are mutually potentially torsion infinite (Corollary~\ref{C:mutually torsion infinite}).

\end{enumerate}

\subsection*{Acknowledgments} We thank Yuan Ren for bringing this interesting question to our attention. The second author thanks Ken Ribet for useful suggestions; 
the third author thanks Stefan Patrikis for many enlightening discussions; and we all thank Davide Lombardo for helpful comments on a draft of this paper.

This paper benefited greatly from a referee's extraordinarily close reading; we're grateful for their efforts.

\section{Preliminaries}\label{Sect: preliminaries}

\subsection{Reminders on algebraic groups}\label{Sect: Goursat's_lemma}

We collect some standard, useful facts on algebraic groups.

\begin{lemma}[Goursat's Lemma]\label{L:goursat}
  Let $G_1$, $G_2$, and $G_{12}$ be either abstract groups or algebraic groups over a field.  Suppose $G_{12}$ is endowed with an inclusion $\iota\colon G_{12}\hookrightarrow G_1\times G_2$ such that $\pi_i \circ \iota$ is surjective for $i=1,2$:
\begin{equation}\label{D:goursat}
\xymatrix{
G_{12} \ar@{^(->}[r]^\iota & G_1 \times G_2 \ar@{->>}[dl]^{\pi_1} \ar@{->>}[dr]_{\pi_2} \\
G_1 && G_2
}.
\end{equation}
Let $M_{12} = \ker(\pi_2\circ\iota)$, and let $H_{12} \iso M_{12}$ be
the image of $M_{12}$ under the isomorphism $G_1 \times \set e \iso G_1 $; define $M_{21}$ and $H_{21}$ analogously.

Then under the composite map
\[
  \xymatrix{
    G_{12} \ar@{^(->}[r] & G_1 \times G_2 \ar[r] &
    \frac{G_1}{H_{12}}\times \frac{G_2}{H_{21}},
  }
\]
$G_{12}$ is the inverse image in $G_1 \times G_2$ of the graph of an isomorphism $\frac{G_1}{H_{12}} \to
\frac{G_2}{H_{21}}$.
\end{lemma}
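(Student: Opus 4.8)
The plan is to work directly with the definitions, treating the abstract-group and algebraic-group cases uniformly (in the algebraic case all quotients and images below are legitimate algebraic subgroups/quotients, since $H_{12}$ and $H_{21}$ are normal, closed, and the image of a homomorphism of algebraic groups is closed). First I would check that $M_{12}=\ker(\pi_2\circ\iota)$, viewed inside $G_1\times G_2$ via $\iota$, consists precisely of the elements of the form $(g,e)$ with $g\in H_{12}$, so that $H_{12}$ is identified with a subgroup of $G_1$; and similarly $H_{21}\subset G_2$. The key point is that $H_{12}$ is \emph{normal} in $G_1$: given $g_1\in G_1$, surjectivity of $\pi_1\circ\iota$ lets me lift $g_1$ to some $(g_1,g_2)\in\iota(G_{12})$, and then conjugating an element $(h,e)$ with $h\in H_{12}$ by $(g_1,g_2)$ gives $(g_1 h g_1^{-1}, e)\in\iota(G_{12})$, whence $g_1 h g_1^{-1}\in H_{12}$. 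The same argument shows $H_{21}\trianglelefteq G_2$, so the quotients $G_1/H_{12}$ and $G_2/H_{21}$ make sense.

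Next I would construct the isomorphism $\varphi\colon G_1/H_{12}\to G_2/H_{21}$. Define a relation on $G_1/H_{12}\times G_2/H_{21}$ by declaring $(\bar g_1,\bar g_2)$ related when $(g_1,g_2)\in\iota(G_{12})$; I must check this is well defined (independent of coset representatives) and is the graph of a map. Well-definedness on the first coordinate is immediate from the definition of $H_{12}=M_{12}$: changing $g_1$ to $g_1 h$ with $h\in H_{12}$ multiplies $(g_1,g_2)$ by $(h,e)\in\iota(G_{12})$, staying in $\iota(G_{12})$; symmetrically for the second coordinate. That $\varphi$ is everywhere defined follows from surjectivity of $\pi_1\circ\iota$ (every $\bar g_1$ is hit), and single-valuedness follows because if $(g_1,g_2)$ and $(g_1,g_2')$ both lie in $\iota(G_{12})$ then their ratio $(e,g_2^{-1}g_2')\in\iota(G_{12})$ lies in $M_{21}$, so $g_2^{-1}g_2'\in H_{21}$ and $\bar g_2=\bar g_2'$. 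Symmetrically $\varphi$ is injective and surjective, and it is a group homomorphism because $\iota(G_{12})$ is a subgroup; hence $\varphi$ is an isomorphism. In the algebraic-group setting one checks $\varphi$ is a morphism by exhibiting it, after the faithfully flat base change $G_1\to G_1/H_{12}$, as induced by the composite $G_{12}\hookrightarrow G_1\times G_2\twoheadrightarrow G_2/H_{21}$, which kills $H_{12}\times\{e\}$; descent then gives the morphism $\varphi$, and the inverse is obtained the same way.

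Finally I would verify that $\iota(G_{12})$ equals the preimage of the graph $\Gamma_\varphi$ under $q\colon G_1\times G_2\to G_1/H_{12}\times G_2/H_{21}$. The inclusion $\iota(G_{12})\subset q^{-1}(\Gamma_\varphi)$ is the definition of $\varphi$. For the reverse inclusion, take $(g_1,g_2)$ with $\varphi(\bar g_1)=\bar g_2$; by definition of $\varphi$ there is some $(g_1,g_2')\in\iota(G_{12})$ with $\bar g_2'=\bar g_2$, so $g_2=g_2' h$ for some $h\in H_{21}$, and then $(g_1,g_2)=(g_1,g_2')\cdot(e,h)\in\iota(G_{12})$ since $(e,h)\in M_{21}\subset\iota(G_{12})$. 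In the algebraic setting one notes $q$ is faithfully flat, so comparing the two closed subgroup schemes on points (over all field extensions, or just scheme-theoretically via the same coset computation) suffices. I do not expect a genuine obstacle here; the only mild subtlety is the bookkeeping in the algebraic-group case to see that $H_{12}$, $H_{21}$ are closed normal subgroups and that $\varphi$ descends to a morphism, but this is routine given that images of algebraic-group homomorphisms are closed and quotients by closed normal subgroups exist.
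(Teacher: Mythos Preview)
Your proof is correct and is precisely the standard argument the paper has in mind; the paper itself gives no proof beyond citing Ribet for the abstract-group case and remarking that the algebraic-group case follows by the same pointwise verification. You have simply written out in full the details the paper omits, including the extra care (normality of $H_{12}$, descent for $\varphi$) needed in the algebraic-group setting.
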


\begin{proof}
This is standard; see, e.g., \cite[Lemma~5.2.1]{Ribet-Gal-repn-AV-RM} for the case of abstract
groups.  The constructions of $H_{ij}$ and $M_{ij}$ also make sense in the category of algebraic groups, and the asserted properties may be verified pointwise, as in \cite{Ribet-Gal-repn-AV-RM}.
\end{proof}

\begin{remark}
  \label{R:goursat product}
Under the hypotheses of Lemma \ref{L:goursat}, suppose $H_{12} =
G_{1}$.  Then clearly $H_{21} = G_2$, and thus $G_{12} \cong G_1 \times
G_2$.  At the opposite extreme, if $H_{12}$ and $H_{21}$ are trivial, then, up to a choice of isomorphism $G_1 \iso G_2$, $\iota: G_{12}\hookrightarrow G_1\times G_2$ is the diagonal embedding.
\end{remark}

\begin{lemma}\label{Lem: dim_and_rank}
Assume $G_1$, $G_2$ and $G_{12}$ are reductive groups over a
field $K$ of characteristic zero and satisfy a diagram \eqref{D:goursat}. Then the following are equivalent:
\begin{enumerate}[label=(\alph*)]
    \item $\dim G_{12}=\dim G_2$; 
    \item $\rank G_{12}=\rank G_2$; and
\item the surjection $G_{12} \twoheadrightarrow G_2$ is an isogeny.
\end{enumerate}
\end{lemma}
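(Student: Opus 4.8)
The plan is to prove the cycle of equivalences $(c) \Rightarrow (a) \Rightarrow (b) \Rightarrow (c)$, keeping everything in the category of reductive groups over a field of characteristic zero, where dimension, rank, and the structure theory of isogenies behave well. First I would observe that an isogeny is a surjection with finite kernel; so $(c) \Rightarrow (a)$ is immediate, since a surjective morphism of algebraic groups with finite kernel does not change dimension: $\dim G_{12} = \dim \ker + \dim G_2 = \dim G_2$. For $(a) \Rightarrow (b)$, I would use that for a reductive (indeed, any connected) group over a field of characteristic zero, the rank equals the dimension of a maximal torus, and that for a surjection $G_{12} \twoheadrightarrow G_2$ the image of a maximal torus of $G_{12}$ contains a maximal torus of $G_2$; combined with the fact that a surjection of reductive groups of the same dimension must have finite kernel — which follows because the kernel $N := \ker(G_{12}\twoheadrightarrow G_2)$ is a normal subgroup with $\dim N = \dim G_{12} - \dim G_2 = 0$ — one sees the surjection is already an isogeny, hence rank is preserved. (In fact this shows $(a)$ and $(c)$ are trivially equivalent; the content is relating these to $(b)$.)

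The substantive direction is $(b) \Rightarrow (c)$: assuming $\rank G_{12} = \rank G_2$, I want to conclude that the surjection $\pi \colon G_{12} \twoheadrightarrow G_2$ is an isogeny, i.e.\ that $N = \ker \pi$ is finite. The key point is that $N$ is a normal subgroup of the reductive group $G_{12}$, so its identity component $N^\circ$ is a connected normal subgroup and hence reductive; if $N$ were infinite, $N^\circ$ would be a positive-dimensional reductive group, which therefore contains a nontrivial torus $S$. One then argues that $S$ can be enlarged to a maximal torus $T_{12}$ of $G_{12}$ with $S \subset T_{12}$, and that $\pi(T_{12})$ is a maximal torus of $G_2$ (the image of a maximal torus under a surjection of reductive groups is maximal); but $\pi$ kills $S$, so $\dim \pi(T_{12}) \le \dim T_{12} - \dim S < \dim T_{12}$, giving $\rank G_2 < \rank G_{12}$, a contradiction. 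Hence $N$ is finite and $\pi$ is an isogeny.

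The main obstacle — really the only place one must be careful — is the claim that a connected normal subgroup of a reductive group is reductive, and relatedly that a positive-dimensional connected algebraic group over a field of characteristic zero contains a nontrivial torus unless it is unipotent; so to run the torus argument cleanly I would instead invoke that $N^\circ$, being normal in the reductive $G_{12}$, cannot contain a nontrivial connected unipotent subgroup (its unipotent radical, being characteristic in $N^\circ$ hence normal in $G_{12}$, would lie in the trivial unipotent radical of $G_{12}$), so $N^\circ$ is reductive, and a positive-dimensional reductive group has positive rank. An alternative, perhaps slicker, route that avoids structure theory: pass to an algebraically closed field, choose a maximal torus $T_{12} \subseteq G_{12}$; then $\pi(T_{12})$ lies in some maximal torus $T_2$ of $G_2$, and conversely, since $\pi$ is surjective, one shows $\pi(T_{12}) = T_2$ using that $G_2$ is generated by the conjugates of any maximal torus together with unipotent elements lifted from $G_{12}$ — this gives a surjection $T_{12} \twoheadrightarrow T_2$ of tori, which under $(b)$ is a surjection of tori of equal dimension, hence an isogeny, so $\ker\pi \cap T_{12}$ is finite; then a diagram chase using that every element of $N$ is conjugate into the (now finite) $N\cap T_{12}$-coset structure, or simply that $\dim N = \dim\ker(T_{12}\to T_2) = 0$ after re-deriving $\dim G_{12} = \dim G_2$ from rank equality via the Weyl-dimension formula applied to both, closes the argument. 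Either way, once $N$ is known to be finite, $(c)$ follows, completing the cycle and hence the proof.
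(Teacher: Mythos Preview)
Your argument is correct and rests on the same structural fact the paper uses: the identity component $N^\circ$ of the kernel is a connected normal subgroup of the reductive group $G_{12}$, hence itself reductive, so it has positive dimension if and only if it has positive rank. The paper packages this more economically by passing to the induced exact sequence of Lie algebras $0 \to \operatorname{Lie}(N) \to \operatorname{Lie}(G_{12}) \to \operatorname{Lie}(G_2) \to 0$; since $\operatorname{Lie}(N)=\operatorname{Lie}(N^\circ)$ is reductive, both dimension and rank are read off additively from this sequence, and each of (a), (b) is seen to be equivalent to $\operatorname{Lie}(N)=0$, i.e.\ to (c). This handles all three equivalences at once, bypassing your cycle of implications and the group-level bookkeeping about images of maximal tori under surjections. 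Your ``alternative route'' at the end is superfluous, and the appeal to a Weyl dimension formula is misplaced, but the main line of argument is sound.
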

\begin{proof}
The short exact sequence of algebraic
groups 
\[\xymatrix{
0 \ar[r] &M_{12} \ar[r]&  G_{12}\ar[r]^{\pi_2\circ \iota}&  G_2 \ar[r]& 0}
\]
induces a corresponding exact sequence on Lie algebras. Since
$K$ has characteristic zero, the rank and dimension of a reductive
group can be read off from its Lie algebra.  Note that $\operatorname{Lie}(M_{12}) = \operatorname{Lie}(M_{12}^\circ)$ and that $M_{12}^\circ$, being a connected normal (Lemma \ref{Lem: component is normal}) subgroup of a reductive group, is also reductive (e.g., \cite[Cor.~21.53]{MilAlg-gp}).  In particular, either (a)
or (b) holds if and only if $\operatorname{Lie}(M_{12})= (0)$, i.e.,
$\dim M_{12}=0$ and thus $\pi_2\circ
\iota$ is an isogeny.
\end{proof}

\begin{lemma}
\label{Lem: component is normal}
  Let $G$ be a connected algebraic group, and let $M\subset G$ be a normal
  algebraic subgroup.  Then $M^\circ$ is normal in $G$.
\end{lemma}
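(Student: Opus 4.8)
The plan is to exploit the standard fact that the identity component of an algebraic group is a \emph{characteristic} subgroup, together with the principle that a characteristic subgroup of a normal subgroup is itself normal. Concretely: since $M$ is normal in $G$, for every point $g$ of $G$ the conjugation map $c_g\colon M\to M$, $m\mapsto gmg^{-1}$, is well-defined and is an automorphism of $M$ as an algebraic group. Any automorphism of $M$ permutes the (finitely many) connected components of $M$ and fixes the one containing the identity, so $c_g(M^\circ)=M^\circ$ for every $g$; this is precisely the statement that $M^\circ$ is normal in $G$.

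To make this argument at the level of group schemes rather than merely on points, I would take the conjugation morphism $\alpha\colon G\times M\to M$ — which exists because $M$ is normal in $G$ — and restrict it to $G\times M^\circ$. Since $G$ is connected and has the rational point $e$, it is geometrically connected, and likewise $M^\circ$, so $G\times M^\circ$ is connected. Now $M^\circ$ is open and closed in $M$ (there are only finitely many components), hence $\alpha^{-1}(M^\circ)$ is open and closed in $G\times M^\circ$ and contains $(e,e)$; by connectedness $\alpha^{-1}(M^\circ)=G\times M^\circ$. Thus $\alpha$ factors through the closed subgroup $M^\circ$, i.e. $gM^\circ g^{-1}\subseteq M^\circ$ for all $g$, and applying this with $g^{-1}$ in place of $g$ gives the reverse inclusion. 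Hence $M^\circ$ is stable under $G$-conjugation, and being a subgroup it is normal in $G$.

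This is entirely routine and there is no genuine obstacle; the only mild point requiring care is the passage from the pointwise claim to the claim about group schemes, which is handled by the openness/closedness and connectedness observations above (alternatively one may simply cite the corresponding statement in a standard reference such as \cite{MilAlg-gp}).
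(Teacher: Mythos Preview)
Your proposal is correct and takes essentially the same approach as the paper: both arguments use that $G\times M^\circ$ is connected and that $M^\circ$ is open and closed in $M$ to conclude that conjugation by $G$ carries $M^\circ$ into $M^\circ$. The paper phrases this in terms of the image of the conjugation map (a connected set containing $e$, hence lying in $M^\circ$), while you phrase it in terms of the preimage $\alpha^{-1}(M^\circ)$ being clopen and nonempty in a connected space; these are the same argument.
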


\begin{proof}
  Since $G$ and $M^\circ$ are connected, the image of $M^\circ$ under
  conjugation by $G$ is connected and contains the identity element of $G$.  Since this image is a subgroup of $M$, which is normal in $G$, it
  is contained in $M^\circ$, and thus $M^\circ$ is stable under
  conjugation by $G$.
\end{proof}

Finally, when studying CM abelian varieties in Section \ref{Sect: application}, we will need to work with algebraic tori.

Let $K$ be a perfect field.  An algebraic torus $T/K$ is an algebraic group such that $T_{\bar K} \iso \mathbb G_{m,\bar K}^{\oplus \dim T}$.  Let $X^*(T)$ be the (absolute) character group $X^*(T) = \Hom(T_{\bar K}, \mathbb G_{m,\bar K})$, and let $X^*(T)_\rat  = X^*(T)\otimes\rat$; then $T\mapsto X^*(T)$ gives a contravariant equivalence between the category of algebraic tori over $K$  and the category of finite free $\integ$-modules with a continuous action by the absolute Galois group $\gal(K) := \gal(\bar K/K)$.  This extends to a contravariant equivalence between the category of $K$-groups of multiplicative type and the category of finitely generated $\integ$-modules with continuous $\gal(K)$ action.  We have $\dim T = \rank_{\integ}X^*(T) = \dim_{\rat} X^*(T)_\rat$.  If $\alpha\colon S \to T$ is a morphism of algebraic tori, then $\dim \ker(\alpha) = \dim_\rat X^*(T)_\rat/ \alpha^*X^*(S)_\rat$, and $\alpha$ has connected kernel if and only if $X^*(T)/\alpha^*X^*(S)$ is torsion-free.

If $F/\rat$ is a finite extension, we let $T^F$ denote $\operatorname{Res}_{F/\rat}\gp_{m,F}$, the Weil restriction of the multiplicative group, and let $T^{F,1}$ denote the norm one torus $\operatorname{Res}_{F/\rat}^{(1)}\gp_{m,F}$, which is the kernel of the norm map $\operatorname{N}_{F/\rat}\colon T^F \to \gp_m$.

\subsection{Representations of algebraic groups}\label{Sect: repn_alg_gp}

\subsubsection{Fixed spaces}
\label{S:fixed spaces}

Let $G/K$ be an algebraic group over a field.  Let $V/K$ be a
finite-dimensional representation of $G$, i.e., a finite-dimensional
vector space $V$ equipped with a morphism $G \to \GL_V$ of algebraic groups. The schematic fixed space of $V$ under $G$ is 
\[
V^G = \set{v\in V: g\cdot v_R=v_R \ (\text{in }V_R) \text{ for all $K$-algebras $R$ and all $g\in {G}(R)$}},
\]
where $v_R$ is the image of $v$ under $V\hookrightarrow V\otimes_KR$ \cite[\S 4i]{MilAlg-gp}.

We define the na\"ive fixed space as
\[
V^{G(K)} = \set{ v \in V: g\cdot v = v\text{ for all } g\in G(K)}.
\]
More generally, if $\Gamma\subset G(K)$ is an abstract subgroup, the subspace fixed by $\Gamma$ is
\[
  V^\Gamma = \set{ v \in V: g\cdot v = v\text{ for all }g \in \Gamma}.
\]

\begin{lemma}
We have $V^G \subseteq V^{G(K)}$, with equality if $K$ is infinite and
$G/G^\circ$ is a split \'etale group.
\end{lemma}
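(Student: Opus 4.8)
The plan is to prove the two inclusions (really, the inclusion $V^G \subseteq V^{G(K)}$ always, and the reverse under the stated hypotheses) separately.

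First, the easy containment $V^G \subseteq V^{G(K)}$. A vector $v \in V^G$ is fixed by $g \cdot v_R = v_R$ for \emph{all} $K$-algebras $R$ and all $g \in G(R)$; specializing to $R = K$ gives exactly the defining condition for membership in $V^{G(K)}$. So this direction is immediate and requires no hypotheses on $K$ or on $G/G^\circ$.

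For the reverse inclusion $V^{G(K)} \subseteq V^G$ under the hypothesis that $K$ is infinite and $G/G^\circ$ is split \'etale, I would argue as follows. Fix $v \in V^{G(K)}$. Consider the ``stabilizer of $v$'' as a closed subscheme $H := \Stab_G(v) \subseteq G$, which represents the functor $R \mapsto \set{g \in G(R) : g \cdot v_R = v_R}$; that this is a closed subgroup scheme is standard (e.g. \cite[\S 4i]{MilAlg-gp}). The statement $v \in V^G$ is precisely the statement $H = G$. Now the hypothesis $v \in V^{G(K)}$ says $G(K) \subseteq H(K)$. I would first handle the connected component: $H^\circ \subseteq G^\circ$ is a closed subgroup scheme, and since $K$ is infinite, $G^\circ(K)$ is Zariski-dense in $G^\circ$ when $G^\circ$ is smooth; more carefully, over a perfect field or using that we only need the reduced structure, one reduces to showing that a smooth connected group over an infinite field has dense rational points, which is classical (Rosenlicht), so $H^\circ = G^\circ$. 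Then, using that $G/G^\circ$ is a split \'etale (hence constant) finite group scheme, every connected component of $G$ has a $K$-point; since each such $K$-point lies in $H(K) \subseteq H$, and $H \supseteq G^\circ$ meets every component, we get $H = G$, i.e. $v \in V^G$.

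The main obstacle is the density argument in the connected case: passing from ``$G(K)$ fixes $v$'' to ``the schematic stabilizer equals $G$'' requires knowing $G^\circ(K)$ is Zariski-dense in $G^\circ$, which can fail for non-smooth group schemes (e.g. in positive characteristic, $\alpha_p$ has no nontrivial rational points) — so one should either assume $G$ is smooth, work up to the reduced subscheme, or invoke that the relevant statement is really about the smooth algebraic groups arising in the paper's applications (where $K$ has characteristic zero, so every group scheme is smooth). The role of the split \'etale hypothesis on $\pi_0(G)$ is exactly to guarantee enough $K$-points to hit every connected component; without it, $G$ could have components with no rational point, and then $G(K)$ would see only part of $G$. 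I would flag that in characteristic zero both subtleties evaporate, which is the only case needed in the sequel.
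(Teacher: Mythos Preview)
Your proof is correct and follows essentially the same approach as the paper: both reduce the equality to the Zariski density of $G(K)$ in $G$ under the stated hypotheses. The paper asserts this density in one line; you unpack it via the stabilizer subscheme $H = \Stab_G(v)$, identifying the two ingredients (density of $G^\circ(K)$ in $G^\circ$ via unirationality, and a $K$-point in each component from the split hypothesis on $\pi_0(G)$), and your caveats about smoothness in positive characteristic go slightly beyond the paper's terse justification. One small wording quibble: what your density argument directly yields is $G^\circ \subseteq H$ (since $H$ is closed and contains the dense subset $G^\circ(K)$), from which $H^\circ = G^\circ$ then follows; the order of implications in your write-up is slightly inverted.
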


\begin{proof}
The first statement is trivial; for the second, use the fact that
under the stated hypotheses, $G(K)$ is Zariski dense in $G$.
\end{proof}

\begin{lemma}
\label{Lem: fixed_subsp_sub_repn}
Let $\rho\colon G \to \GL_V$ be a morphism of algebraic groups over $K$,
and let $M\subset G$ be a normal algebraic subgroup.  Then $V^M$ is
stable under $G$, and thus is a sub-$G$-representation of $G$.
\end{lemma}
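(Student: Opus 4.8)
The plan is to argue on $R$-points and to extract everything from the elementary identity $m(gv)=g\bigl((g^{-1}mg)v\bigr)$ combined with the normality of $M$.

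First I would recall that the schematic fixed space $V^M$ is the kernel of $\rho_M-\varepsilon\colon V\to V\otimes_K\mathcal{O}(M)$, where $\rho_M$ is the $\mathcal{O}(M)$-coaction on $V$ (the $G$-coaction composed with the surjection $\mathcal{O}(G)\twoheadrightarrow\mathcal{O}(M)$) and $\varepsilon(v)=v\otimes 1$. Since $K$ is a field, every $K$-algebra $R$ is flat over $K$, so $(-)\otimes_K R$ is exact and hence $(V^M)\otimes_K R=(V\otimes_K R)^{M_R}$ for every $R$; that is, forming $V^M$ commutes with base change (this also follows from the formalism in \cite{MilAlg-gp}). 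To prove that $V^M$ is a sub-$G$-representation of $V$ it therefore suffices to check $G$-stability of $V^M\subseteq V$ on $R$-points: for every $K$-algebra $R$ and every $g\in G(R)$ the automorphism $\rho(g)$ of $V_R:=V\otimes_K R$ should carry the subspace $(V^M)\otimes_K R=(V_R)^{M_R}$ into itself.

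To do this, fix such $R$, $g\in G(R)$ and $v\in (V_R)^{M_R}$, let $R'$ be any $R$-algebra and $m\in M(R')$, and write $g',v'$ for the images of $g,v$ in $G(R')$, $V_{R'}$. Because $M$ is normal in $G$, conjugation by $g'$ carries $M(R')$ into itself, so $(g')^{-1}mg'\in M(R')$; and since $v\in (V_R)^{M_R}$, this element fixes $v'$. Therefore
\[
m\cdot(g'\cdot v')=g'\cdot\bigl((g')^{-1}mg'\cdot v'\bigr)=g'\cdot v'.
\]
As $g'\cdot v'$ is precisely the image of $\rho(g)v$ under $V_R\to V_{R'}$, this shows that $\rho(g)v$ is fixed by $M(R')$ for all $R'$ and all $m\in M(R')$, i.e.\ $\rho(g)v\in (V_R)^{M_R}$. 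Restricting the $G$-action on $V$ to $V^M$ then exhibits $V^M$ as a sub-$G$-representation of $V$, as claimed.

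I do not expect a genuine obstacle here: the mathematical content is just the conjugation identity together with the definition of a normal subgroup scheme. The only point needing a moment's care is that $V^M$ is the \emph{schematic} fixed space, so the argument must be run through the functor of points and must use that $V^M$ is compatible with base change — which, as noted, is automatic over a field.
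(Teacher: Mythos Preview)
Your proof is correct. The core idea is the same as the paper's---the conjugation identity $m\cdot(gv)=g\cdot((g^{-1}mg)\cdot v)$ together with normality of $M$---but you handle the schematic nature of $V^M$ differently. The paper passes to the algebraic closure of $K$, so that $G(K)$ and $M(K)$ become Zariski dense in $G$ and $M$; then $V^M$ coincides with the na\"ive fixed space $V^{M(K)}$, and it suffices to check stability under each $g\in G(K)$ as for abstract groups. You instead stay over $K$ and argue through the functor of points, using flatness of every $K$-algebra to identify $(V^M)\otimes_K R$ with $(V_R)^{M_R}$ and then running the conjugation argument for arbitrary $R$-points. Your route is slightly more self-contained (no appeal to density of rational points over an algebraically closed field), while the paper's is shorter once one grants that reduction; both amount to the same elementary observation.
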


\begin{proof}
It suffices to verify this after passage to the algebraic closure of
$K$, so we may and do assume that $G(K)$ is dense in $G$, and that
$M(K)$ is dense in $M$. It now suffices to show that, for each $g\in
G(K)$, $gW \subset W$.  Since $W$ is fixed by the normal subgroup $M$, $gW$ is fixed by
$gMg\inv = M$, and so $gW \subset V^M = W$.
\end{proof}

\begin{lemma}
\label{L:fixed by index ell subgroup}
    Let $V/\ff_\ell$ be a finite-dimensional vector space, and let $\mathtt G$ be an abstract group equipped with a representation $\rho: \mathtt G \to \GL_V(\ff_\ell)$.  Let $\mathtt H \trianglelefteq \mathtt G$ be a normal subgroup such that $V^{\rho(\mathtt H)}\supsetneq (0)$ and $[\mathtt G: \mathtt H]$ is a power of $\ell$.  Then $V^{\rho(\mathtt G)}\supsetneq (0)$.
\end{lemma}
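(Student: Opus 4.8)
The statement is a finite-group-theoretic fact: a nonzero $\ff_\ell$-representation of a group $\mathtt G$ has nonzero $\mathtt G$-fixed vectors provided a normal subgroup $\mathtt H$ of $\ell$-power index already has nonzero fixed vectors. The natural approach is to reduce to the action of the $\ell$-group $\mathtt G/\mathtt H$ on the fixed space of $\mathtt H$, and then invoke the classical fact that a finite $\ell$-group acting on a nonzero $\ff_\ell$-vector space always fixes a nonzero vector. So first I would set $W = V^{\rho(\mathtt H)}$, which is nonzero by hypothesis.

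The key point is that $W$ is stable under the action of $\mathtt G$: since $\mathtt H$ is normal in $\mathtt G$, for $g \in \mathtt G$ and $w \in W$ we have, for every $h \in \mathtt H$, $\rho(h)\rho(g)w = \rho(g)\rho(g^{-1}hg)w = \rho(g)w$ because $g^{-1}hg \in \mathtt H$; hence $\rho(g)w \in W$. (This is the abstract-group analogue of Lemma \ref{Lem: fixed_subsp_sub_repn}.) Therefore $\rho$ restricts to a representation of $\mathtt G$ on $W$ on which $\mathtt H$ acts trivially, i.e., the action factors through the quotient group $\bar{\mathtt G} := \mathtt G/\mathtt H$, giving $\bar\rho : \bar{\mathtt G} \to \GL_W(\ff_\ell)$.

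By hypothesis $[\mathtt G : \mathtt H]$ is a power of $\ell$, so $\bar{\mathtt G}$ is a finite $\ell$-group. Now I would apply the standard orbit-counting argument: $\bar{\mathtt G}$ acts on the finite set $W$ (which has $\ell^{\dim W}$ elements), the orbit of each vector has size a power of $\ell$, and the fixed points $W^{\bar\rho(\bar{\mathtt G})}$ form the union of the size-one orbits; reducing the orbit equation $|W| = \sum |\text{orbits}|$ modulo $\ell$ gives $|W^{\bar\rho(\bar{\mathtt G})}| \equiv |W| \equiv 0 \pmod \ell$. Since $0$ is always a fixed vector, $|W^{\bar\rho(\bar{\mathtt G})}|$ is a positive multiple of $\ell$, hence at least $\ell$, so there is a nonzero fixed vector $v$. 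Finally, $v \in W^{\bar\rho(\bar{\mathtt G})} = W^{\rho(\mathtt G)} \subseteq V^{\rho(\mathtt G)}$, which is therefore nonzero, as desired.

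I do not anticipate any real obstacle here; the only thing to be careful about is the bookkeeping that "fixed by $\mathtt G$ acting on $W$" coincides with "fixed by $\mathtt G$ acting on $V$ and lying in $W$", which is immediate, and that the factorization through $\mathtt G/\mathtt H$ is legitimate, which is exactly the $\mathtt H$-stability of $W$ combined with $\mathtt H$ acting trivially on $W$ by definition of $W$. The substantive input is entirely the classical $\ell$-group fixed-point lemma.
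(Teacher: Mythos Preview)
Your proof is correct and follows essentially the same approach as the paper: pass to a $\mathtt G$-stable subspace on which $\mathtt H$ acts trivially, factor the action through the $\ell$-group $\mathtt G/\mathtt H$, and invoke the standard fact that a finite $\ell$-group acting on a nonzero $\ff_\ell$-vector space has a nonzero fixed vector. The only cosmetic differences are that the paper takes $W=\ff_\ell[\mathtt G]w$ for a single $w\in V^{\rho(\mathtt H)}$ rather than the full fixed space, and phrases the fixed-point step via Sylow (the image lies in a unipotent subgroup of $\GL_W$) rather than the orbit-counting argument you use.
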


\begin{proof}
    Choose some $w \in V^{\rho(\mathtt H_\ell)}\smallsetminus \set 0$, and let $W = \ff_\ell[\mathtt G]w$ be the subspace spanned by its $\mathtt G$-orbit.  The representation $\rho_W$ of $\mathtt G$ on $W\subseteq V$ factors as $\mathtt G \to \mathtt G/\mathtt H \to \GL_W(\ff_\ell)$.  Since $\mathtt G/\mathtt H$ is an $\ell$-group, by the Sylow theorem, $\rho_W(\mathtt G)$ is contained in the $\ff_\ell$-points of a maximal unipotent subgroup $U$ of $\GL_W$.  (Differently put, after a suitable choice of basis, the image of $\mathtt G$ in $\GL_W$ is contained in the $\ff_\ell$-points of the group $U$ of unipotent upper-triangular matrices.)  Since $U$ has a nontrivial fixed vector, so does $\mathtt G$.
\end{proof}

 \subsubsection{Bounded subgroups}\label{Sect: bounded_subgps}
Let $H/\integ[1/N]$ be a smooth affine algebraic group scheme with geometrically connected fibers.  Suppose that for each
$\ell\nmid N$, an abstract group $\mathtt H_\ell \subset H(\ff_\ell)$
is specified.

\begin{defn}
  \label{D:bounded}
  The collection
$\set{\mathtt H_\ell}_\ell$ is bounded (in $H$, independently of
$\ell$) if there exists some finite $B$ such that, for each $\ell$,
\begin{equation}
  \label{E:defbounded}
[ H(\ff_\ell):\mathtt H_\ell]< B.
\end{equation}
Equivalently,  there is some positive $C = 1/B$ such that
$\#\mathtt H_\ell  > C \#H(\ff_\ell)$.
\end{defn}

\begin{lemma}\label{L:ffindexgroups}
  Let $\alpha\colon H \to G$ be a surjective morphism of smooth algebraic
  groups over $\integ[1/N]$, with $G$ connected.  Then $\set{\alpha(H(\ff_\ell))}_\ell$ is
  bounded in $G$.
\end{lemma}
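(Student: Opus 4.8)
The plan is to bound $[G(\ff_\ell) : \alpha(H(\ff_\ell))]$ uniformly in $\ell$ using Lang's theorem and the structure of the kernel. First I would set $K = \ker(\alpha)$, a normal subgroup scheme of $H$ over $\integ[1/N]$; after possibly enlarging $N$ we may assume $K$ is flat (indeed smooth away from finitely many primes) over $\integ[1/N]$, so the fibers $K_{\ff_\ell}$ are algebraic groups of dimension $d := \dim H - \dim G$ with a uniformly bounded number of geometric connected components and uniformly bounded component group. The exact sequence $1 \to K \to H \to G \to 1$ of smooth group schemes gives, for each $\ell \nmid N$, an exact sequence of pointed sets in flat (equivalently étale, since everything is smooth) cohomology over $\ff_\ell$:
\[
H(\ff_\ell) \xrightarrow{\ \alpha\ } G(\ff_\ell) \longrightarrow H^1(\ff_\ell, K).
\]
Hence $[G(\ff_\ell) : \alpha(H(\ff_\ell))] \leq \# H^1(\ff_\ell, K)$ (the fiber of the connecting map over the base point is the image of $\alpha$, and the number of nonempty fibers is at most $\#H^1$). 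So it suffices to bound $\# H^1(\ff_\ell, K)$ independently of $\ell$.

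Next I would bound $\# H^1(\ff_\ell, K)$. Let $K^\circ$ be the fiberwise-connected component; over $\ff_\ell$ we have $1 \to K^\circ_{\ff_\ell} \to K_{\ff_\ell} \to \pi_0(K_{\ff_\ell}) \to 1$, giving
\[
H^1(\ff_\ell, K^\circ_{\ff_\ell}) \to H^1(\ff_\ell, K_{\ff_\ell}) \to H^1(\ff_\ell, \pi_0(K_{\ff_\ell})).
\]
For the connected piece, Lang's theorem says $H^1(\ff_\ell, K^\circ_{\ff_\ell}) = 0$ (a connected algebraic group over a finite field has trivial $H^1$; see, e.g., \cite[Thm.~21.2.1]{MilAlg-gp} or Serre's \emph{Galois Cohomology}). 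For the component group $\pi_0(K_{\ff_\ell})$, which is a finite étale $\ff_\ell$-group scheme of order bounded independently of $\ell$ (this order is controlled by the generic fiber $K_\rat$ once $N$ is large enough), $H^1(\ff_\ell, \pi_0(K_{\ff_\ell})) = H^1(\gal(\bar\ff_\ell/\ff_\ell), \pi_0(K_{\ff_\ell})(\bar\ff_\ell))$ is the group of coinvariants of the Frobenius action, which has order at most $\#\pi_0(K_{\ff_\ell})(\bar\ff_\ell)$. Therefore $\# H^1(\ff_\ell, K_{\ff_\ell})$ is bounded by a constant depending only on $K_\rat$, hence only on $\alpha$ and $N$. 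Taking $B$ to be this constant (plus $1$, to be safe) gives $[G(\ff_\ell) : \alpha(H(\ff_\ell))] < B$ for all $\ell \nmid N$, which is exactly the assertion that $\set{\alpha(H(\ff_\ell))}_\ell$ is bounded in $G$ in the sense of Definition~\ref{D:bounded}.

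The main obstacle is the bookkeeping needed to make "uniformly in $\ell$" precise: one must check that after inverting finitely many more primes, $K = \ker(\alpha)$ is a smooth (or at least flat with smooth connected part) group scheme over $\integ[1/N']$ with $\pi_0(K)$ finite étale of constant order, so that the Lang-theorem input and the component-group count apply at every residue characteristic simultaneously. This is standard spreading-out (constructibility of fiber dimension and of $\#\pi_0$, generic smoothness in the flat topology), but it is the step where care is required; everything after it is the short cohomological argument above. A cleaner alternative, if one prefers to avoid flat cohomology of a possibly non-smooth kernel, is to factor $\alpha$ through $H/K^{\circ}$: since $H \to H/K^\circ$ is surjective with \emph{connected} kernel, Lang gives $H(\ff_\ell) \twoheadrightarrow (H/K^\circ)(\ff_\ell)$, and then $(H/K^\circ) \to G$ has finite étale kernel $\pi_0(K)$ of bounded order, so the cokernel of $(H/K^\circ)(\ff_\ell) \to G(\ff_\ell)$ injects into $H^1(\ff_\ell, \pi_0(K))$, again of bounded size; this reduces the whole lemma to the two clean facts (Lang's theorem, and $\#H^1$ of a bounded finite module over $\ff_\ell$ is bounded).
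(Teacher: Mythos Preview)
Your proposal is correct and follows essentially the same route as the paper: both take the long exact sequence in Galois cohomology over $\ff_\ell$ attached to $1 \to \ker\alpha \to H \to G \to 1$, kill $H^1$ of the connected part of the kernel via Lang's theorem, and bound $H^1$ of the finite \'etale component group uniformly in $\ell$. The only cosmetic difference is that the paper invokes the exact formula $\#H^1(\ff_\ell,D)=\#D(\ff_\ell)$ (citing Platonov--Rapinchuk) rather than your cruder upper bound, and does not dwell on the spreading-out bookkeeping you flag; your added discussion of enlarging $N$ and the alternative factorization through $H/K^\circ$ is sound but not needed beyond what the paper uses.
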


\begin{proof}
Let $P = \ker(\alpha)$.  Its formation commutes with base change, and it is the extension of a finite group scheme $D$ 
by a connected group $P^\circ$.  Taking $\ff_\ell$-points, we have
\[
\xymatrix{
1 \ar[r] & P(\ff_\ell) \ar[r] & H(\ff_\ell) \ar[r]^{\alpha(\ff_\ell)} & G(\ff_\ell) \ar[r] & H^1(\ff_\ell, P).
}
\]
It suffices to show that $H^1(\ff_\ell,P)$ is bounded independently of
$\ell$.   By Lang's theorem $H^1(\ff_\ell, P^\circ)$ is trivial, and so
it suffices to show that $H^1(\ff_\ell, D)$ is bounded. Now use the fact that $D$ is finite and $\#H^1(\ff_\ell,D) = \#D(\ff_\ell)$ (\cite[p.~290]{PlatonovRapinchuk}).
\end{proof}

\begin{lemma}
\label{L:abstractboundedindex}
Let
\[
\xymatrix{ 0 \ar[r] & P \ar[r] & H \ar[r]^\alpha & G \ar[r] & 0}
\]
be an exact sequence of algebraic groups over $\integ[1/N]$.  Suppose that $\set{\mathtt H_\ell}_\ell$ has bounded index in $H$.  Then
\begin{enumerate}[label=(\alph*)]
\item $\set{\alpha(\mathtt H_\ell)}_\ell$ has bounded index in $G$, and
\item $\set{ (\ker \alpha|_{\mathtt H_\ell})\cap P^\circ(\ff_\ell)}_\ell$ has bounded index in $P^\circ$.
\end{enumerate}
\end{lemma}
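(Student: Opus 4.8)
The plan is to reduce both statements to two elementary facts about finite-index subgroups of abstract groups --- that images under surjective homomorphisms and intersections with subgroups do not increase the index --- together with Lemma~\ref{L:ffindexgroups}. Throughout, fix a bound $B$ as in Definition~\ref{D:bounded}, so that $[H(\ff_\ell):\mathtt H_\ell] < B$ for all $\ell \nmid N$.

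For (a), I would factor the index through the image of $\alpha$ on $\ff_\ell$-points:
\[
[G(\ff_\ell):\alpha(\mathtt H_\ell)] = [G(\ff_\ell):\alpha(H(\ff_\ell))]\cdot[\alpha(H(\ff_\ell)):\alpha(\mathtt H_\ell)].
\]
The first factor is bounded independently of $\ell$ by Lemma~\ref{L:ffindexgroups}, applied to the surjection $\alpha\colon H\to G$ (with $G$ connected). For the second factor, $\alpha(\ff_\ell)$ restricts to a surjection $H(\ff_\ell)\twoheadrightarrow\alpha(H(\ff_\ell))$ carrying $\mathtt H_\ell$ onto $\alpha(\mathtt H_\ell)$; since a surjective homomorphism sends the cosets of a subgroup onto the cosets of its image, this factor is at most $[H(\ff_\ell):\mathtt H_\ell]<B$. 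Multiplying, $[G(\ff_\ell):\alpha(\mathtt H_\ell)]$ is bounded.

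For (b), first observe that since taking $\ff_\ell$-points commutes with the formation of kernels, $\ker\bigl(\alpha(\ff_\ell)\colon H(\ff_\ell)\to G(\ff_\ell)\bigr) = P(\ff_\ell)$, so $\ker(\alpha|_{\mathtt H_\ell}) = \mathtt H_\ell\cap P(\ff_\ell)$ and hence $(\ker\alpha|_{\mathtt H_\ell})\cap P^\circ(\ff_\ell) = \mathtt H_\ell\cap P^\circ(\ff_\ell)$, a subgroup of $P^\circ(\ff_\ell)$ (here $P^\circ$ is a connected group scheme over $\integ[1/N]$, after possibly enlarging $N$, as in the proof of Lemma~\ref{L:ffindexgroups}). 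The inclusion $P^\circ(\ff_\ell)\hookrightarrow H(\ff_\ell)$ then induces an injection of left-coset spaces $P^\circ(\ff_\ell)/\bigl(\mathtt H_\ell\cap P^\circ(\ff_\ell)\bigr)\hookrightarrow H(\ff_\ell)/\mathtt H_\ell$, whence $[P^\circ(\ff_\ell):\mathtt H_\ell\cap P^\circ(\ff_\ell)]\le[H(\ff_\ell):\mathtt H_\ell]<B$.

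There is no serious obstacle here: the genuine input is Lemma~\ref{L:ffindexgroups}, and everything else is bookkeeping with finite-index subgroups. The only points worth a moment's attention are the identification of $\ker\alpha$ on $\ff_\ell$-points with $P(\ff_\ell)$ and the fact that $P^\circ$ is a well-behaved connected group scheme over $\integ[1/N]$ --- both already in hand from the preceding lemma.
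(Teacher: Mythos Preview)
Your proof is correct and follows essentially the same approach as the paper's: both rely on Lemma~\ref{L:ffindexgroups} for (a) and elementary finite-index bookkeeping for the rest. The only cosmetic difference is that the paper phrases everything in terms of cardinalities and the constant $C$, whereas you work directly with indices; your argument for (b) is in fact slightly more direct, bypassing the paper's intermediate step through $P(\ff_\ell)$ and the torsor count over components.
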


\begin{proof}
Suppose $\#\mathtt H_\ell> C_{\mathtt H}\cdot  \#H(\ff_\ell)$ and (using Lemma \ref{L:ffindexgroups}) $\#\alpha(H(\ff_\ell)) > C_\alpha \cdot \#G(\ff_\ell)$ for $\ell \gg 0$. We then have the easy estimates
\[
    \#\alpha(\mathtt H_\ell) = \frac{\#\mathtt H_\ell}{\#\ker \alpha|_{\mathtt H_\ell}} \ge \frac{\# \mathtt H_\ell}{\#P(\ff_\ell)}> C_{\mathtt H} \frac{\#H(\ff_\ell)}{\#P(\ff_\ell)} = C_{\mathtt H} \cdot  \# \alpha(H(\ff_\ell)) {>} C_{\mathtt H}C_\alpha \cdot  \#G(\ff_\ell),
\]
This proves (a).

Let $\mathtt P_\ell = \ker \alpha|_{\mathtt H_\ell}$.  Then
\[
\#\mathtt P_\ell = \frac{\#\mathtt H_\ell}{\# \alpha(\mathtt H_\ell)} \ge \frac{\#\mathtt H_\ell}{\#\alpha(H(\ff_\ell))} 
> C_{\mathtt H}\frac{\#H(\ff_\ell)}{\#\alpha(H(\ff_\ell))} = C_{\mathtt H} \#P(\ff_\ell).
\]
Let $P'$ be any irreducible component of $P_{\ff_\ell}$.  Then $\mathtt P_\ell\cap P'(\ff_\ell)$, if nonempty, is a torsor under $\mathtt P_\ell \cap P^\circ(\ff_\ell)$, and so 
\[
\# (\mathtt P_\ell \cap P^\circ(\ff_\ell)) \ge \frac 1{[P:P^\circ]}\#\mathtt P_\ell.
\]
\end{proof}

 \subsubsection{Representations of connected groups}

Let $G/\integ[1/N]$ be a smooth affine algebraic group with connected fibers.  Let $V$
be a free $\integ[1/N]$-module of rank $n$, and let $\rho\colon G \to \GL_V$ be a
representation.  For a field $k$ equipped with a ring map
$\integ[1/N]\to k$, let $r_k(G,\rho)$ be the multiplicity of the trivial
representation of $G(k)$:
\begin{equation}\label{Eqn: defn_r_k}
    r_k(G,\rho) = \dim_k (V\otimes k)^{G(k)}.   \end{equation}
  
Let $V_\ell = V\otimes \ff_\ell$, and let $r_\ell(G,\rho) = r_{\ff_\ell}(G,\rho)$.  Note that, when $\ell \gg 0$, by
specialization, we always have $r_\rat(G,\rho) \leq r_\ell(G,\rho)$.

If $g\in G(k)$, let $m(g,\rho) = m_k(g,\rho)$ be the multiplicity of
$1$ as a root of the characteristic polynomial of $\rho(g)$.  Let
$G_{\rho, \ge m}$ be the locus of those $g$ for which $m(g,\rho) \ge
m$.  (Schematically, $G_{\rho, \ge 1}$ may be constructed by pulling
back the composite morphism
\[
  \xymatrix{
    G \ar[r]^\rho & \GL_V \ar[r]^{\text{charpoly}} & \mathbb
    G_a^n \ar[r]^{\text{eval}_1} & \mathbb G_a
  }
\]
by the zero section $\spec \integ[1/N] \to \mathbb G_a$, where the map $\text{eval}_1$ means evaluating the characteristic polynomial at $1$; for other
values of $m$, $G_{\rho,\ge m}$ may be constructed by considering
higher derivatives of the characteristic polynomial.)

\begin{lemma}\label{L:r_ell_implies_r_Q}
Suppose that there is an infinite collection of primes $\ELL$ such that, if $\ell \in \ELL$, then $r_\ell(G,\rho) = r$. Then we have:
\begin{enumerate}[label=(\alph*)]
\item For each $g\in G(\rat)$, $m(g,\rho) \ge r$;
\item $r_\rat(G, \rho) = r$; and
\item $\dim V_\rat^{G_\rat} = r$.
\end{enumerate}
\end{lemma}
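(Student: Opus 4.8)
The plan is to leverage the constructibility of the loci $G_{\rho,\ge m}$ together with the hypothesis that $r_\ell(G,\rho)=r$ for infinitely many $\ell$. The key observation is that $r_\ell(G,\rho)$ is controlled by the behavior of the multiplicity function $m(\cdot,\rho)$ on $G(\ff_\ell)$: if every element of $G(\ff_\ell)$ has $m(g,\rho)\ge r$, then averaging (or using that the trivial isotypic component has dimension $r_\ell$) forces $r_\ell\ge r$, while the reverse inequality comes from the existence of elements achieving $m(g,\rho)=r_\ell$. More precisely, I would first establish that $r_\ell(G,\rho)=r$ implies $G(\ff_\ell)=G_{\rho,\ge r}(\ff_\ell)$, i.e., the closed subscheme $G_{\rho,\ge r}\subset G$ contains all $\ff_\ell$-points; since $G$ is geometrically connected and smooth over $\integ[1/N]$, while $G_{\rho,\ge r}$ is a proper closed subscheme unless it equals $G$, a point-counting argument (Lang--Weil, or the crude bound that a proper closed subscheme of a connected $d$-dimensional variety over $\ff_\ell$ has $O(\ell^{d-1})$ points versus $\sim \ell^d$ for $G$ itself) shows that for $\ell\gg 0$ in $\ELL$ we must have $G_{\rho,\ge r}=G$ as schemes over $\integ[1/N]$ after localizing at a suitable prime, hence $G_{\rho,\ge r}$ contains the generic fiber $G_\rat$. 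This proves (a): every $g\in G(\rat)$ satisfies $m(g,\rho)\ge r$.

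For (b) and (c), I would argue as follows. Part (a) says that $1$ is a root of the characteristic polynomial of $\rho(g)$ with multiplicity $\ge r$ for all $g\in G(\rat)$, and since $G$ is connected this propagates to all $g\in G(\bar\rat)$ as well (the locus $G_{\rho,\ge r}$ is closed and contains the Zariski-dense set $G(\rat)$, using that $G$ is connected over a field of characteristic zero so $G(\rat)$ is dense). Now I invoke the standard fact that for a connected group $G$ over a field of characteristic zero acting on $V_\rat$, the schematic fixed space $V_\rat^{G_\rat}$ has dimension equal to $\min_{g\in G}m(g,\rho)$ — indeed, decomposing $V_\rat\otimes\bar\rat$ into isotypic components for the reductive quotient (and using that the unipotent radical acts trivially on its invariants, contributing the full $m(g,\rho)$ only generically), one sees the trivial isotypic multiplicity equals the generic value of $m(g,\rho)$. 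Combined with $m(g,\rho)\ge r$ everywhere and the existence of a rational point where equality is approached (which we get from reduction mod $\ell$: for $\ell\in\ELL$ large, there is $g_\ell\in G(\ff_\ell)$ with $m(g_\ell,\rho)=r_\ell=r$, so the locus $G_{\rho,\ge r+1}$ is a proper closed subscheme of $G$ over infinitely many residue fields, hence proper over $\integ[1/N]$ after localization, hence does not contain $G_\rat$), we conclude $\dim V_\rat^{G_\rat}=r$, giving (c). Finally (b) follows since $r_\rat(G,\rho)=\dim_\rat V_\rat^{G(\rat)}=\dim_\rat V_\rat^{G_\rat}=r$, where the middle equality uses again that $G(\rat)$ is Zariski dense in the connected group $G_\rat$ so the naïve and schematic fixed spaces coincide.

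The main obstacle I anticipate is making the transition from "infinitely many primes $\ell$" to a statement over $\rat$ fully rigorous: one needs that a closed subscheme $Z\subset G$ over $\integ[1/N]$ whose special fiber $Z_{\ff_\ell}$ equals $G_{\ff_\ell}$ for infinitely many $\ell$ must in fact have $Z_\rat=G_\rat$. This is true — if $Z\ne G$ then $G\setminus Z$ is a nonempty open, hence dominates $\spec\integ[1/N']$ for some $N'$, so $Z_{\ff_\ell}\ne G_{\ff_\ell}$ for all but finitely many $\ell$, contradicting the hypothesis — but the argument must be phrased carefully, perhaps by passing to the scheme-theoretic image or by a dimension/Chevalley argument, and one must separately handle the point-counting that converts the equality $r_\ell=r$ into the scheme-theoretic containment $G(\ff_\ell)\subseteq G_{\rho,\ge r}(\ff_\ell)$ and its sharpness. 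A secondary technical point is the precise statement relating $\dim V^{G_\rat}$ to the generic multiplicity $m(g,\rho)$; this is cleanest if one reduces to the case where $G$ is reductive (replacing $G$ by $G/R_u(G)$, which does not change the fixed space since $R_u(G)$ acting on a representation has the same invariants after one checks unipotent elements fix exactly the invariant subspace, or alternatively one argues directly with Jordan decomposition), but stating it carefully for general smooth connected $G$ requires a short argument.
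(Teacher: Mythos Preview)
Your approach to (a) is valid and arguably cleaner than the paper's: you show $G_{\rho,\ge r}=G$ by point-counting (a proper closed subscheme of a smooth connected group has too few $\ff_\ell$-points), whereas the paper argues pointwise on $G(\rat)$ by reducing the characteristic polynomial of each $\rho(g)$ modulo $\ell$.

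However, your argument for (b) and (c) has a genuine gap. The ``standard fact'' you invoke---that $\dim V_\rat^{G_\rat}$ equals the generic (equivalently, minimal) value of $m(g,\rho)$---is false. Take $G=\SL_2$ acting on its adjoint representation: every element has $1$ as an eigenvalue (a regular semisimple $g$ has eigenvalues $t^2,1,t^{-2}$), so $m(g,\rho)\ge 1$ identically, yet $V^G=0$. More generally, nontrivial irreducible representations can have $1$ as an eigenvalue generically. The same example also defeats your auxiliary claim that there exist $g_\ell\in G(\ff_\ell)$ with $m(g_\ell,\rho)$ exactly $r$: with $V=\mathrm{triv}\oplus\mathrm{adj}$ one has $r_\ell=1$ but $m(g,\rho)\ge 2$ everywhere. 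Reducing to the reductive case does not help, since $\SL_2$ is already reductive.

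What the paper does instead is more delicate. Having established (a), it lets $m_0\ge r$ be the generic value of $m(\cdot,\rho)$ and passes to the dense open $G^*\subset G_\rat$ of semisimple elements with $m(g,\rho)=m_0$. For each $g\in G^*(\rat)$ the fixed subspace $W_g\subset V_\rat$ has dimension exactly $m_0$, and the crucial observation is that for $\ell\in\ELL$ large, the reduction $W_{g,\ell}$ contains the common fixed space $Y_\ell=(V\otimes\ff_\ell)^{G(\ff_\ell)}$. Taking a finite intersection $W=\bigcap_i W_{g_i}$ over a set of $g_i$ computing the full intersection, one finds $W\otimes\ff_\ell\supseteq Y_\ell$, hence $\dim W\ge r$; and $W$ is fixed by the Zariski-dense set $G^*(\rat)$, hence by all of $G_\rat$. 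This gives $r_\rat(G,\rho)\ge r$, and the reverse inequality is by specialization. The key idea your proposal is missing is this passage through the individual fixed spaces $W_g$ and their comparison, after reduction, with the global fixed space $Y_\ell$.
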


\begin{proof}
We assume $r>0$, since (by specialization) the statement is trivial if $r=0$.

For (a) it suffices to apply, to the characteristic polynomial of
$\rho(g)$, the following elementary observation. Let
$f(T) \in \rat[T]$ be any polynomial; since ``clearing denominators''
does not alter the roots of $f$, we may and do assume
$f(T) \in \integ[T]$.  Suppose $\lambda \in \integ$. If $\ell$ is
sufficiently large, relative to the coefficients of $f$ and to $\lambda$, then
$f(\lambda) = 0$ if and only if $f(\lambda) \equiv 0 \bmod \ell$; and,
by taking the first $r-1$ derivatives of $f$, a similar result holds
for roots of higher multiplicity.

We now prove (b). 
For each $\ell \in \ELL$, let $Y_\ell \subset V_\ell$ be
the subspace fixed by $G_{\ff_\ell}$.

Let $m_0$ be the integer such that $G_\rat= {G_\rat}_{\rho, \ge m_0}
\supsetneq {G_\rat}_{\rho, \ge m_0+1}$; by (a), we have $m_0 \ge r \ge
1$.  Let ${G_\rat}^{ss}$ be the open and dense semisimple locus (e.g., \cite[Theorem~22.2]{Humphreys-Linear-Alg-gp}), and let $G^* = {G_\rat}^{ss} \smallsetminus
{G_{\rat}}_{\rho, \ge m_0+1}$.  Since $G_\rat$ is connected,
$G^*$ is open and dense in $G_\rat$.  Like any connected affine algebraic group, $G_\rat$ is
unirational. Consequently, $G^*(\rat)$ is Zariski dense in
$G_\rat$.

For $g\in G^*(\rat)$, let $W_g \subset V_\rat$ be the
$m_0$-dimensional subspace fixed by $g$.  After choosing an integral
model of $W_g$, for all but finitely many $\ell$ the reductions
$g_\ell\in G(\ff_\ell)$ and $W_{g,\ell} \subset
V_\ell$ are well-defined; and for $\ell \in \ELL$, we have
$W_{g,\ell} \supseteq Y_\ell$.

Let $W = \cap_{g\in G^*(\rat)} W_g$; since $V_\rat$ is
finite-dimensional, there is a finite list of elements $g_1, \dots,
g_n \in G^*(\rat)$ such that $W = \cap_i W_{g_i}$. If  $\ell$
is sufficiently large as to avoid the finitely many primes of bad
reduction for the $W_{g_i}$, then   $W\otimes \ff_\ell$ contains
$Y_\ell$.  This shows that $\dim_\rat W \ge \dim_{\ff_\ell} Y_\ell =
r$.  By the density of $G^*(\rat)$, $W$ is fixed by all of
$G_\rat$, and so $r_\rat(G,\rho) \ge r$; again, by
specialization, we find that equality holds. This proves (b).  Since $W = V_\rat^{G_\rat}$, we may conclude (c), as well.
\end{proof}

Now let $\set{\mathtt G_\ell}$ be a collection of bounded subgroups of
$G$, and let
\[
  r_\ell(\mathtt G_\ell,\rho) = \dim{V_\ell}^{\mathtt G_\ell}.
\]

In the statement below, "sufficiently large" depends only on $\dim V$ and the constant in  \eqref{E:defbounded}; however, in our applications, we don't have control over this constant.
\begin{lemma}
  \label{L:specialfix}
Let $\set{\mathtt G_\ell}$ be a collection of bounded subgroups of
$G$.  If $r_\ell(\mathtt
G_\ell,\rho) = r$ for some sufficiently large $\ell$, then $r_\ell({G},\rho) = r$.
\end{lemma}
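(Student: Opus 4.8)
The plan is to reduce the statement about the abstract bounded subgroups $\mathtt G_\ell$ to the statement about the full groups $G(\ff_\ell)$, and then feed that into Lemma~\ref{L:r_ell_implies_r_Q}. The inclusion $\mathtt G_\ell \subseteq G(\ff_\ell)$ gives $V_\ell^{G(\ff_\ell)} \subseteq V_\ell^{\mathtt G_\ell}$, hence $r_\ell(G,\rho) \le r_\ell(\mathtt G_\ell,\rho) = r$ for free; so the real content is the reverse inequality $r_\ell(G,\rho) \ge r$, i.e.\ that a vector fixed by the bounded subgroup $\mathtt G_\ell$ is already fixed by all of $G(\ff_\ell)$, provided $\ell$ is large enough relative to $\dim V$ and the index bound $B$. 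First I would pass to the subrepresentation $W_\ell = \ff_\ell[G(\ff_\ell)]\cdot V_\ell^{\mathtt G_\ell}$ generated by the $\mathtt G_\ell$-fixed vectors; this is a $G(\ff_\ell)$-stable subspace of $V_\ell$ whose dimension is bounded in terms of $\dim V$ and the number of cosets of $\mathtt G_\ell$, hence in terms of $\dim V$ and $B$ only. It therefore suffices to show that $W_\ell$ is a trivial $G(\ff_\ell)$-representation of dimension exactly $r$ once $\ell$ is large.

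The main technical step is a \emph{Jordan–Schur / Larsen–Pink} type input: there is a constant $J = J(d)$, depending only on $d = \dim_{\ff_\ell} W_\ell$ (which is itself bounded in terms of $\dim V$ and $B$), such that any finite subgroup of $\GL_d(\ff_\ell)$ of order prime to $\ell$, and indeed any finite subgroup, has a normal subgroup of index at most $J$ which, modulo its $\ell$-part, is abelian; more precisely I will use that for $\ell$ large relative to $d$, the image $\bar\rho_W(G(\ff_\ell))$ in $\GL_d(\ff_\ell)$ either contains $G(\ff_\ell)$'s contribution to a nontrivial semisimple representation, or is an $\ell$-group. Concretely: because $G$ is connected and $G(\ff_\ell)$ is generated by its $\ell$-Sylow subgroups (for $\ell$ large, as $G$ is a connected algebraic group over $\ff_\ell$, by Steinberg / the structure theory of groups of Lie type — or simply because $G(\ff_\ell)^+ = G(\ff_\ell)$ up to bounded index and has no small-index subgroups), any quotient of $G(\ff_\ell)$ acting on a space of dimension $d$ with $d$ small compared to $\ell$ must act trivially. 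Thus $W_\ell$ is a trivial $G(\ff_\ell)$-module, so $W_\ell = V_\ell^{G(\ff_\ell)}$, giving $r_\ell(G,\rho) = \dim W_\ell$. But $V_\ell^{\mathtt G_\ell} \subseteq W_\ell$ and $W_\ell = V_\ell^{G(\ff_\ell)} \subseteq V_\ell^{\mathtt G_\ell}$, forcing $r_\ell(G,\rho) = r$.

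The delicate point — and the step I expect to be the main obstacle — is making precise and self-contained the claim that a connected algebraic group's $\ff_\ell$-points have no faithful nontrivial representation of dimension small relative to $\ell$: one must handle the cases where $G$ has a nontrivial torus part (where $G(\ff_\ell)$ has abelian quotients of order up to $\ell-1$, which do embed into $\GL_1(\ff_\ell)$), and this is precisely why the conclusion needs $\ell$ large depending on $\dim V$ and $B$ but \emph{not} on anything else. The cleanest route is probably: use Lemma~\ref{L:abstractboundedindex} to replace $\mathtt G_\ell$ by its intersection with the $\ell$-Sylow-generated subgroup, reducing to the case where $W_\ell$ is acted on through a quotient that is, up to bounded index, a product of finite simple groups of Lie type of bounded rank; invoke that such groups have no nontrivial representations in characteristic $\ell$ of bounded dimension other than the ones coming from their own defining representation, which cannot factor through a proper quotient; and conclude triviality. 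Once this structural input is in hand, the rest is the elementary bookkeeping sketched above. If a fully self-contained argument proves awkward, an acceptable alternative is to cite a uniform Jordan-type bound for subgroups of $\GL_n(\ff_\ell)$ together with the fact that $G(\ff_\ell)$, for $\ell \gg_G 0$, is perfect modulo a bounded center and thus kills any bounded-dimensional abelian-image representation.
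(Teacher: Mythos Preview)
Your reduction to showing $r_\ell(G,\rho)\ge r$ is correct, and the idea of looking at the $G(\ff_\ell)$-span $W_\ell$ of $V_\ell^{\mathtt G_\ell}$ is reasonable.  However, the heart of your argument --- that $G(\ff_\ell)$, for a connected $G$, admits no nontrivial representation of dimension small relative to $\ell$ --- is simply false when $G$ has a nontrivial torus quotient.  If $G=\gp_m$ then $G(\ff_\ell)=\ff_\ell\units$ is cyclic of order $\ell-1$ and has plenty of one-dimensional nontrivial characters; it is neither generated by its $\ell$-Sylow (which is trivial) nor perfect modulo a bounded center.  You flag this yourself as ``the delicate point'', but neither of your proposed workarounds (passing to the subgroup generated by $\ell$-elements, or invoking perfectness) survives the torus case, and any reductive $G$ with positive-dimensional center will exhibit the same obstruction.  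So as written the plan has a genuine gap, not merely a missing citation.

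The paper's proof avoids representation theory of $G(\ff_\ell)$ entirely and instead argues on the algebraic-group side.  One takes $W_\ell = V_\ell^{\mathtt G_\ell}$ and considers the schematic stabilizer $\fix_{G,W_\ell}\subset G_{\ff_\ell}$.  B\'ezout's theorem bounds $\#\pi_0(\fix_{G,W_\ell})$ uniformly in $\ell$ and in the choice of $W_\ell$, since $\fix_{G,W_\ell}$ is cut out in $\GL_V$ by equations of bounded degree.  Since $\mathtt G_\ell\subset \fix_{G,W_\ell}(\ff_\ell)$ and $\#\mathtt G_\ell \ge C\,\#G(\ff_\ell)$, the Lang--Weil type bounds $(\ell-1)^d\le \#H(\ff_\ell)\le (\ell+1)^d$ force $\dim\fix_{G,W_\ell}=\dim G$ once $\ell$ is large; connectedness of $G$ then gives $\fix_{G,W_\ell}=G_{\ff_\ell}$.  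This point-counting argument is indifferent to whether $G$ is semisimple, a torus, or a mixture, which is exactly what your approach struggles with.
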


\begin{proof}
For any $\ell$, let $W_\ell \subset V\otimes \ff_\ell$ be
a subspace of dimension $r$, and let $\fix_{G,W_\ell}\subset
G_{\ff_\ell}$ be the subgroup scheme which fixes $W_\ell$.  By
B\'ezout's theorem, since
$\fix_{G,W_\ell}$ is the intersection of $G$ and $\fix_{\GL_{V,\ff_\ell},W_\ell}$ in $\GL_{V,\ff_\ell}$, there is a constant
$B$ such that $\#\pi_0(\fix_{G,W_\ell}) \le B$, independent of the
choice of $W_\ell$ and of $\ell$. 

For any connected group $H$ of dimension $d$ over a finite field
$\ff$, we have $(\#\ff-1)^d \le \#H(\ff) \le (\#
\ff+1)^d$ (\cite[Lemma~3.5]{Nori-subgp-GLn(Fp)} or \cite[Proposition~3.1]{Larsen-Pink-finite-gp-of-alg-gp}).  Fix a prime $\ell$, and suppose that $r_\ell(\mathtt
G_\ell,\rho) = r$; let $W_\ell \subset V\otimes \ff_\ell$ be the
subspace fixed by $\mathtt G_\ell$.  We then have
\[
\#\fix_{G,W_\ell}^\circ(\ff_\ell) \ge \frac{1}{B}
\#\fix_{G,W_\ell}(\ff_\ell) \ge \frac{1}{B} \#\mathtt G_\ell \ge \frac{C}{B} \#G(\ff_\ell) \ge \frac{C}{B} (\ell-1)^d.
\]
If $\ell\gg_{d,C/B} 0$, this forces $\dim \fix_{G,W_\ell} = \dim G_{\ff_\ell}$, so
that $\fix_{G,W_\ell} =G_{\ff_\ell}$.
\end{proof}

\begin{lemma}
\label{L:rlGl}
  Let $\set{\mathtt G_\ell}$ be a collection of bounded subgroups of
  $G$.  Suppose that there is an infinite collection of primes $\ELL$
  such that, if $\ell \in \ELL$, then $r_\ell(\mathtt G_\ell,\rho) \ge
  r$. Then
\begin{enumerate}[label=(\alph*)]
\item $r_{\rat}(G,\rho)\ge r$ and
\item $r_\ell(G,\rho) \ge r$ for all but finitely many $\ell$.
\end{enumerate}
\end{lemma}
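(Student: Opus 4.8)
The plan is to combine Lemma \ref{L:specialfix} with Lemma \ref{L:r_ell_implies_r_Q}. First I would observe that, by passing to an infinite subset of $\ELL$, one may assume that the integers $r_\ell(\mathtt G_\ell,\rho)$ are constant on $\ELL$, say equal to some $r' \ge r$; since only a lower bound on the final multiplicities is claimed, it suffices to prove the statement with $r$ replaced by $r'$, and so I will simply assume $r_\ell(\mathtt G_\ell,\rho) = r$ for all $\ell \in \ELL$. For each $\ell \in \ELL$ that is sufficiently large (in the sense of Lemma \ref{L:specialfix}, which depends only on $\dim V$ and the bounding constant $B$ from \eqref{E:defbounded}), Lemma \ref{L:specialfix} gives $r_\ell(G,\rho) = r$. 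Discarding the finitely many small primes from $\ELL$ leaves an infinite collection of primes on which $r_\ell(G,\rho) = r$, so the hypothesis of Lemma \ref{L:r_ell_implies_r_Q} is satisfied.

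Next I would apply Lemma \ref{L:r_ell_implies_r_Q} directly. Part (b) of that lemma yields $r_\rat(G,\rho) = r$, which is statement (a) (in fact with equality). For statement (b), recall from the discussion following \eqref{Eqn: defn_r_k} that for $\ell \gg 0$ one always has $r_\rat(G,\rho) \le r_\ell(G,\rho)$ by specialization of the fixed subspace $V_\rat^{G_\rat}$ (choose an integral model of this subspace over $\integ[1/N']$ for suitable $N'$; its reduction mod $\ell$ is fixed by $G_{\ff_\ell}$ for all but finitely many $\ell$). Hence $r_\ell(G,\rho) \ge r_\rat(G,\rho) = r$ for all but finitely many $\ell$, which is (b).

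The only mild subtlety — and the step I expect to need the most care — is the bookkeeping around "sufficiently large": the threshold in Lemma \ref{L:specialfix} is not effective in our setting because we do not control the bounding constant $B$, but this does not matter here, since we only need that it is removed for all but finitely many $\ell \in \ELL$, and $\ELL$ is infinite. Everything else is a direct invocation of the two preceding lemmas together with the standard specialization inequality $r_\rat \le r_\ell$; no further argument is required.
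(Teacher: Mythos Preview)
Your proposal is correct and follows essentially the same route as the paper's proof: apply Lemma \ref{L:specialfix} to pass from $r_\ell(\mathtt G_\ell,\rho)$ to $r_\ell(G,\rho)$ for large $\ell\in\ELL$, then invoke Lemma \ref{L:r_ell_implies_r_Q} for (a), and specialization for (b). Your extra step of passing to a subset of $\ELL$ on which $r_\ell(\mathtt G_\ell,\rho)$ is constant (possible since these values are bounded by $\rank V$) is a clean way to match the equality hypothesis in the literal statement of Lemma \ref{L:specialfix}; the paper glosses over this, but the proof of that lemma in fact shows the stronger statement that $r_\ell(G,\rho)\ge r_\ell(\mathtt G_\ell,\rho)$ for $\ell$ large, so either treatment works.
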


\begin{proof}
  By Lemma \ref{L:specialfix}, we find
  that for $\ell \gg0$, we have $r_\ell({G}_\ell,\rho) \ge r$.
  Lemma \ref{L:r_ell_implies_r_Q} then shows that
  $r_\rat(G,\rho) \ge r$.  This proves (a); then (b) follows by
  specialization.
  \end{proof}

\begin{lemma}
  \label{L:rZlGZl}
  Let $\mathtt G_{\ell_0^\infty}$ be a Zariski dense subgroup of
  $G_{\rat_{\ell_0}}$.   Suppose
  $r(\mathtt G_{\ell_0^\infty},\rho) \ge r$.  Then
\begin{enumerate}[label=(\alph*)]
\item $r_{\rat}(G,\rho)\ge r$, and 
\item $r_\ell(G,\rho) \ge r$ for all but finitely many $\ell$.
\end{enumerate}
\end{lemma}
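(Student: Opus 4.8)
The plan is to use the Zariski density of $\mathtt G_{\ell_0^\infty}$ to replace it by the whole group scheme $G_{\rat_{\ell_0}}$ when computing fixed spaces, and then to descend the resulting inequality to $\rat$ and to $\ff_\ell$ by base change and specialization, exactly as in the last paragraph of the proof of Lemma~\ref{L:r_ell_implies_r_Q}. Thus Lemma~\ref{L:rZlGZl} is the $\ell_0$-adic analogue of Lemma~\ref{L:rlGl}, and it is in fact easier, since no counting of points of finite groups is needed.

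First I would reduce both (a) and (b) to the single claim that $\dim_\rat V_\rat^{G_\rat} \ge r$, where $V_\rat^{G_\rat}$ denotes the \emph{schematic} fixed space of \S\ref{S:fixed spaces}. Indeed $V_\rat^{G_\rat} \subseteq V_\rat^{G(\rat)}$, so this claim gives $r_\rat(G,\rho) \ge r$, which is (a); and (b) then follows by specialization, since $r_\ell(G,\rho) \ge r_\rat(G,\rho)$ for all but finitely many $\ell$. Moreover, the schematic fixed space $V^G$ is the kernel of the $\integ[1/N]$-linear map $V \to V\otimes_{\integ[1/N]}\OO(G)$ sending $v$ to $a(v) - v\otimes 1$, where $a$ is the coaction; being a kernel of a map of flat modules, its formation commutes with the flat base changes $\integ[1/N]\to\rat\to\rat_{\ell_0}$. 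Hence $\dim_\rat V_\rat^{G_\rat} = \dim_{\rat_{\ell_0}} (V\otimes\rat_{\ell_0})^{G_{\rat_{\ell_0}}}$, and it is enough to bound this last quantity below by $r$.

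The key step is the equality $(V\otimes\rat_{\ell_0})^{G_{\rat_{\ell_0}}} = (V\otimes\rat_{\ell_0})^{\mathtt G_{\ell_0^\infty}}$, the inclusion $\subseteq$ being obvious. Conversely, suppose $v \in V\otimes\rat_{\ell_0}$ is fixed by $\mathtt G_{\ell_0^\infty}$, and consider the morphism of $\rat_{\ell_0}$-schemes $\varphi_v\colon G_{\rat_{\ell_0}} \to V\otimes\rat_{\ell_0}$, $g\mapsto \rho(g)v - v$. It vanishes on $\mathtt G_{\ell_0^\infty}$, which is Zariski dense in $G_{\rat_{\ell_0}}$; since $G_{\rat_{\ell_0}}$ is reduced (being smooth) and the target is affine, the coordinate functions of $\varphi_v$ are elements of $\OO(G_{\rat_{\ell_0}})$ vanishing on a dense subset, hence identically zero, so that $\varphi_v = 0$ and $v$ lies in the schematic fixed space. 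Therefore $\dim_\rat V_\rat^{G_\rat} = \dim_{\rat_{\ell_0}}(V\otimes\rat_{\ell_0})^{\mathtt G_{\ell_0^\infty}} = r(\mathtt G_{\ell_0^\infty},\rho) \ge r$, which completes the proof.

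There is no genuine obstacle here: the only subtle points are that ``Zariski dense'' is being used with scheme-theoretic, not merely topological, force --- legitimate because the smooth group $G$ is reduced --- and that the schematic fixed space is compatible with base change. Both are routine, and together they are exactly what makes this $\ell_0$-adic statement cleaner than its $\ell$-adic counterpart, Lemma~\ref{L:rlGl}.
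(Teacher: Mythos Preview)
Your proof is correct and follows essentially the same approach as the paper's: pass from the dense subgroup $\mathtt G_{\ell_0^\infty}$ to $G_{\rat_{\ell_0}}$ by Zariski density, descend to $\rat$ by base-change compatibility of the schematic fixed space (the paper phrases this as ``since $G_\rat$ is connected, the formation of the fixed points of the action of $G$ is stable under the base change $\rat \hookrightarrow \rat_{\ell_0}$''), and then specialize for (b). You have simply spelled out in detail what the paper's three-sentence proof leaves implicit; one nitpick is that in your base-change argument what matters is the flatness of the ring map $\rat\to\rat_{\ell_0}$, not of the modules themselves.
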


\begin{proof}
Under the hypothesis, $r(G_{\rat_{\ell_0}}, \rho) \geq r$; for (a), it then suffices to note
that, since $G_\rat$ is connected, the formation of the fixed points of the action of $G$ is stable
under the base change $\rat \hookrightarrow \rat_{\ell_0}$ (\S
\ref{S:fixed spaces}).  Part (b) follows by specialization.
\end{proof}

\subsubsection{Interlude on \'etale group schemes}\label{Sect: etale_gp_sch}

If $(S,\bar s)$ is a geometrically pointed connected scheme, then a (not necessarily connected) finite \'etale group scheme $G \to S$ is tantamount to an action, by group automorphisms, of $\pi_1(S,\bar s)$ on the abstract finite group $G_{\bar s}$.  We will say that $G$ is split if this action is trivial.

\begin{lemma}
  \label{L:etaleoftentrivial}
  Let $G/\integ[1/N]$ be an \'etale group scheme, and let $M\subset G$
  be a normal sub-group scheme.  Suppose that there exists an $\ell_0
  \nmid N$ such that $M(\ff_{\ell_0}) = G(\ff_{\ell_0})$.  Then for
  $\ell$ in a set of positive density, $M(\ff_\ell) = G(\ff_\ell)$; and if $G$ is split, then $M(\ff_\ell) = G(\ff_\ell)$ for every $\ell\nmid N$.
\end{lemma}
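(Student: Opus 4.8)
The plan is to reduce the statement to the Chebotarev density theorem via the dictionary of \S\ref{Sect: etale_gp_sch}. Write $\pi = \pi_1(\spec\integ[1/N], \bar s)$ and $\Gamma = G_{\bar s}$, so that $G$ is the datum of an action $\rho\colon \pi \to \Aut(\Gamma)$, and the sub-group scheme $M$ corresponds to a $\pi$-stable subgroup $\Delta \subseteq \Gamma$. Since $\Aut(\Gamma)$ is finite, $\rho$ factors through a finite quotient $\pi \twoheadrightarrow \mathcal G := \Gal(F/\rat)$, where $F/\rat$ is a finite Galois extension unramified outside $N$; write $\bar\rho\colon \mathcal G \hookrightarrow \Aut(\Gamma)$ for the resulting faithful action. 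For every prime $\ell\nmid N$ the prime $\ell$ is unramified in $F$, so $\Frob_\ell$ is a well-defined conjugacy class in $\mathcal G$; choosing a representative $\sigma_\ell$, the base change $G_{\bar\ff_\ell}$ is constant with geometric Frobenius acting through $\bar\rho(\sigma_\ell)$, whence
\[
G(\ff_\ell) = \Gamma^{\bar\rho(\sigma_\ell)}, \qquad M(\ff_\ell) = \Delta^{\bar\rho(\sigma_\ell)} \subseteq \Gamma^{\bar\rho(\sigma_\ell)}.
\]

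Next I would observe that whether $M(\ff_\ell) = G(\ff_\ell)$ depends only on the conjugacy class of $\Frob_\ell$. Since $\Delta^{\bar\rho(\sigma_\ell)} \subseteq \Gamma^{\bar\rho(\sigma_\ell)}$, the equality $M(\ff_\ell) = G(\ff_\ell)$ holds if and only if $\#\Delta^{\bar\rho(\sigma_\ell)} = \#\Gamma^{\bar\rho(\sigma_\ell)}$; and for $\tau\in\mathcal G$ one has $\tau\bigl(\Delta^{\bar\rho(\sigma)}\bigr) = \Delta^{\bar\rho(\tau\sigma\tau^{-1})}$, and likewise for $\Gamma$, because $\Delta$ and $\Gamma$ are $\mathcal G$-stable. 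Hence
\[
S := \set{\sigma\in\mathcal G : \#\Delta^{\bar\rho(\sigma)} = \#\Gamma^{\bar\rho(\sigma)}}
\]
is a union of conjugacy classes, and $M(\ff_\ell) = G(\ff_\ell)$ precisely when $\Frob_\ell \subseteq S$. The hypothesis at $\ell_0$ shows $\Frob_{\ell_0} \subseteq S$, so $S \ne \emptyset$; by the Chebotarev density theorem the set of primes $\ell\nmid N$ with $\Frob_\ell\subseteq S$ — equivalently, with $M(\ff_\ell) = G(\ff_\ell)$ — has density $\#S/\#\mathcal G > 0$. This proves the first assertion.

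For the second assertion, if $G$ is split then $\rho$ is trivial, so we may take $F = \rat$ and $\mathcal G$ trivial; then $\bar\rho(\sigma_\ell) = \id$ for every $\ell\nmid N$, and therefore $G(\ff_\ell) = \Gamma$ and $M(\ff_\ell) = \Delta$ for all such $\ell$. In particular the hypothesis $M(\ff_{\ell_0}) = G(\ff_{\ell_0})$ forces $\Delta = \Gamma$, and hence $M(\ff_\ell) = G(\ff_\ell)$ for every $\ell\nmid N$.

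The only step carrying real content is the first one: identifying $G(\ff_\ell)$ with the Frobenius-fixed subgroup $\Gamma^{\bar\rho(\sigma_\ell)}$ compatibly with the inclusion $M\hookrightarrow G$, and checking that membership of $\Frob_\ell$ in $S$ is well-defined on conjugacy classes. Once these are in place, the Chebotarev density theorem closes the argument and no further obstacle remains.
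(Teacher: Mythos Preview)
Your proof is correct and follows essentially the same approach as the paper: trivialize $G$ and $M$ over a finite Galois extension $F/\rat$, observe that $G(\ff_\ell)$ and $M(\ff_\ell)$ (and hence the equality between them) depend only on the Frobenius conjugacy class at $\ell$, and then invoke Chebotarev. The paper phrases this via the Artin symbol rather than the $\pi_1$-dictionary and is somewhat terser about why the equality is conjugation-invariant, but the argument is the same.
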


\begin{proof}
  Let $\spec(R) \to \spec(\integ[1/N])$ be a Galois \'etale cover which
  trivializes $G$ and $M$; let $K = \operatorname{Frac}(R)$.  Let $\ell\nmid N$ be a prime, and let $\lambda$ be
  a prime of $R$ lying over $\ell$.  The Artin symbol $(\lambda,
  K/\rat)$ determines $G(\ff_\ell)$ and $M(\ff_\ell)$ as abstract
  groups, and the equality $M(\ff_\ell) = G(\ff_\ell)$ depends only on
  the conjugacy class $(\ell, K/\rat)$.

  Under the hypotheses, for any $\ell$ in the set of positive density
  for which $(\ell, K/\rat)= (\ell_0,K/\rat)$, we have $M(\ff_\ell) =
  G(\ff_\ell)$.

  The claim when $G$ is split is trivially true, since then $M$ is split, too, and $M(\ff_\ell) = M(\integ[1/N])$ and $G(\ff_\ell) = G(\integ[1/N])$.
\end{proof}

 \subsubsection{Representations of group schemes}
 We now turn to working with a smooth group scheme $G/\integ[1/N]$.  We will often assume that $G$ has reductive connected component of identity, i.e., that for each $s \in \spec \integ[1/N]$, $(G_s)^\circ$ is reductive.  With this hypothesis, $G^\circ$ is a reductive group scheme over $\integ[1/N]$, and $G/G^\circ$ is \'etale \cite[Prop.~3.1.3]{Conrad-reductive}.  (In fact, we will use the same nomenclature, and deduce the same conclusions, for group schemes over an arbitrary base.) (Without this assumption, it is known that $G/G^\circ$ is an \'etale algebraic space \cite[Lemma 2.1]{Ancone-Huber-Pepin_Lehalleur-2016}, but one may need to enlarge $N$ to ensure that $G/G^\circ$ is representable by a scheme \cite[Prop.~5.1.1]{Behrend-ell-adic-cat-alg-stacks}.)
\begin{lemma}
  \label{L:infmodell}
  Let $G/\integ[1/N]$ be a smooth affine algebraic group scheme with reductive connected component of identity.  Let $V$ be
a free $\integ[1/N]$-module of finite rank, and let $\rho\colon G \to \GL_V$
be a homomorphism of algebraic groups.  Let $\set{\mathtt G_\ell}$ be a collection of bounded subgroups of $G$.  Suppose that there is an infinite collection of primes $\ELL$ such that, if $\ell \in \ELL$, then $r_\ell(\mathtt G_\ell, \rho) \ge r$.
\begin{enumerate}[label=(\alph*)]
\item Then $r_\rat(G^\circ,\rho) \ge r$.
\item Suppose that for some $\ell_0 \in \ELL$, $\mathtt G_{\ell_0}$ meets every geometrically irreducible component of $G_{\ell_0}$.  Then for $\ell$ in a set of positive density, $r_\ell(G,\rho) \ge r$.
\item In the setting of (b), suppose that $G/G^\circ$ is split.  Then $r_\ell(G,\rho) \ge r$ for all $\ell\nmid N$.
\end{enumerate}
\end{lemma}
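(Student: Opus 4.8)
The plan is to reduce each part to the already-established lemmas about connected groups, using the exact sequence $0 \to G^\circ \to G \to G/G^\circ \to 0$ and the étale-group-scheme bookkeeping of \S\ref{Sect: etale_gp_sch}. For part (a): fix $\ell \in \ELL$ and let $W_\ell = V_\ell^{\mathtt G_\ell}$, so $\dim W_\ell \ge r$. Set $\mathtt G_\ell^\circ := \mathtt G_\ell \cap G^\circ(\ff_\ell)$. By Lemma~\ref{L:abstractboundedindex}(b), applied with $P = G^\circ$ (which is connected, being the identity component), the collection $\set{\mathtt G_\ell^\circ}_\ell$ is bounded in $G^\circ$. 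Since $\mathtt G_\ell^\circ \subseteq \mathtt G_\ell$, we have $V_\ell^{\mathtt G_\ell^\circ} \supseteq W_\ell$, so $r_\ell(\mathtt G_\ell^\circ, \rho) \ge r$ for all $\ell \in \ELL$. Now Lemma~\ref{L:rlGl}(a), applied to the connected group $G^\circ$ and the bounded collection $\set{\mathtt G_\ell^\circ}$, gives $r_\rat(G^\circ, \rho) \ge r$. (We also get, by Lemma~\ref{L:rlGl}(b), that $r_\ell(G^\circ, \rho) \ge r$ for all but finitely many $\ell$; this will be used below.)

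For part (b): the subspace $Y := V_\rat^{G^\circ_\rat}$ has $\dim_\rat Y \ge r$ by part (a). By Lemma~\ref{Lem: fixed_subsp_sub_repn}, since $G^\circ$ is normal in $G$, the subspace $V^{G^\circ}$ is stable under $G$, hence defines a sub-representation; after shrinking $N$ it is free over $\integ[1/N]$, and the action of $G$ on it factors through the étale quotient $\Phi := G/G^\circ$. Concretely we get a representation $\bar\rho\colon \Phi \to \GL_{V^{G^\circ}}$ of an étale group scheme over $\integ[1/N]$, with $r_\ell(G, \rho) = r_\ell(\Phi, \bar\rho)$ for $\ell \gg 0$ (using $r_\ell(G^\circ,\rho) \ge r$ so that $V_\ell^{G^\circ(\ff_\ell)}$ has dimension exactly that of $Y\otimes\ff_\ell$ for large $\ell$, and that taking $G(\ff_\ell)$-invariants of $V_\ell$ is the same as taking $\Phi(\ff_\ell)$-invariants of this sub-representation). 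Let $M \subseteq \Phi$ be the schematic stabilizer of the subspace $Y \subseteq V^{G^\circ}$ spread out over $\integ[1/N]$; this is a closed (hence, $\Phi$ being étale, open-and-closed, so étale) sub-group scheme. The hypothesis that $\mathtt G_{\ell_0}$ meets every geometrically irreducible component of $G_{\ell_0}$ means precisely that the image of $\mathtt G_{\ell_0}$ in $\Phi(\ff_{\ell_0})$ is all of $\Phi(\ff_{\ell_0})$; since this image fixes $Y \otimes \ff_{\ell_0}$ (as $Y\otimes\ff_{\ell_0} \supseteq W_{\ell_0}$ for $\ell_0$ large), we get $M(\ff_{\ell_0}) = \Phi(\ff_{\ell_0})$. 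Now apply Lemma~\ref{L:etaleoftentrivial} to $M \subseteq \Phi$: for $\ell$ in a set of positive density, $M(\ff_\ell) = \Phi(\ff_\ell)$, which forces $\Phi(\ff_\ell)$ to fix $Y\otimes\ff_\ell$, hence $r_\ell(\Phi,\bar\rho) \ge \dim_\rat Y \ge r$, and therefore $r_\ell(G,\rho) \ge r$ for $\ell$ in a set of positive density.

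Part (c) is immediate from the same setup: if $G/G^\circ$ is split, then so is $M$, and the split case of Lemma~\ref{L:etaleoftentrivial} gives $M(\ff_\ell) = \Phi(\ff_\ell)$ for \emph{every} $\ell \nmid N$ (possibly after the harmless enlargement of $N$ made above), whence $r_\ell(G,\rho) \ge r$ for all such $\ell$. The main obstacle is bookkeeping rather than a single hard idea: one must be careful that "$\mathtt G_{\ell_0}$ meets every geometrically irreducible component of $G_{\ell_0}$" is correctly translated into surjectivity onto $\Phi(\ff_{\ell_0})$ — this uses that $G_{\ell_0}$ is smooth, so its geometric components are exactly the $G^\circ$-cosets, and that $\ff_{\ell_0}$-points of a component, when nonempty, surject onto the corresponding point of $\Phi$ — and that the passage from "$r_\ell$ of the finite group $G(\ff_\ell)$" to "$r_\ell$ of the étale quotient acting on the sub-representation" is valid, which requires $\ell$ large enough that the connected part contributes no extra invariants, i.e., that $r_\ell(G^\circ,\rho)$ has already stabilized to $r_\rat(G^\circ,\rho)$.
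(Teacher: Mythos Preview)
Your overall strategy coincides with the paper's: part~(a) via Lemma~\ref{L:rlGl} applied to the bounded subgroups $\mathtt G_\ell\cap G^\circ(\ff_\ell)$ of $G^\circ$, and parts~(b)/(c) by passing to the \'etale quotient $\bar G=G/G^\circ$ acting on $V^{G^\circ}$ and invoking Lemma~\ref{L:etaleoftentrivial}.  However, there is a genuine logical slip in your step~(b).

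You set $Y=V_\rat^{G^\circ_\rat}$ and take $M\subseteq\Phi$ to be the (pointwise) stabilizer of $Y$, i.e.\ $M=\ker\bar\rho$.  You then assert that the image of $\mathtt G_{\ell_0}$ in $\Phi(\ff_{\ell_0})$ fixes $Y\otimes\ff_{\ell_0}$, justifying this by ``$Y\otimes\ff_{\ell_0}\supseteq W_{\ell_0}$''.  But that containment runs the wrong way for your purpose: it says the $\mathtt G_{\ell_0}$--fixed subspace $W_{\ell_0}$ sits \emph{inside} $Y\otimes\ff_{\ell_0}$, not that all of $Y\otimes\ff_{\ell_0}$ is fixed by $\mathtt G_{\ell_0}$.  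If $\dim_\rat Y>r$ (which nothing in the hypotheses rules out), $\Phi(\ff_{\ell_0})$ may well act nontrivially on the complement of $W_{\ell_0}$ inside $Y\otimes\ff_{\ell_0}$, so you cannot conclude $M(\ff_{\ell_0})=\Phi(\ff_{\ell_0})$, and the appeal to Lemma~\ref{L:etaleoftentrivial} collapses.

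The paper repairs exactly this point: rather than working with the full $V^{G^\circ}$, it chooses a $\bar G$--subrepresentation $W\subseteq V^{G^\circ}$ of rank~$\ge r$ on which $\mathtt G_{\ell_0}$ (equivalently, all of $\bar G(\ff_{\ell_0})$, since $\mathtt G_{\ell_0}$ surjects onto it) acts trivially modulo~$\ell_0$, and sets $M=\ker(\bar\rho_W)$.  Then $M(\ff_{\ell_0})=\bar G(\ff_{\ell_0})$ holds by construction, and Lemma~\ref{L:etaleoftentrivial} yields (b) and~(c).  Your argument becomes correct once you replace $Y$ by such a $W$; everything else in your write-up (the translation of the component hypothesis into surjectivity onto $\Phi(\ff_{\ell_0})$, the stabilization of $r_\ell(G^\circ,\rho)$ needed to compare $W_{\ell_0}$ with $V^{G^\circ}\otimes\ff_{\ell_0}$, and the split case) is fine.
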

\begin{proof}
By Lemma \ref{L:rlGl}, applied to the bounded subgroups ${\mathtt
  G_\ell \cap G^\circ(\ff_\ell)}$ of $G^\circ$ we find that
$r(G^\circ_\rat,\rho) \ge r$; this proves (a).

Now suppose that there is some $\ell_0\in \ELL$ for which $\mathtt G_{\ell_0}$ meets every geometrically irreducible component of $G_{\ell_0}$.  Let $\bar G = G/G^\circ$. The image of $\mathtt G_{\ell_0}$ in $\bar G(\ff_{\ell_0})$ is all of $\bar G(\ff_{\ell_0})$.
Replace $V$ with the eigenspace where $G^\circ$ acts with eigenvalue
one; then $V$ has rank at least $r$.  The representation $\rho\colon  G \to
\GL_V$ factors through $\bar\rho\colon  \bar G \to \GL_W$.  There is some subrepresentation $\bar\rho_W\colon  \bar G \to \GL_W \subset \GL_V$ of rank at least $r$ such that $\mathtt G_{\ell_0}$ acts trivially on $W\otimes \ff_{\ell_0}$.

Let $M$ be the group scheme $M   = \ker(\bar\rho_W) \subset \bar G$.  
We have $\mathtt G_{\ell_0} \subseteq M(\ff_{\ell_0})$.
Parts (b) and (c) now follow from Lemma \ref{L:etaleoftentrivial}.
\end{proof}

\begin{lemma}
  \label{L:GcircandGmodell}
  Let $G/\integ_\ell$ be a smooth affine group scheme with reductive connected component of identity, and suppose
  that $\ell \nmid [G:G^\circ]$.  Let $V$ be a
  free $\integ_\ell$-module of finite rank, and let $\rho\colon G \to \GL_V$
  be a homomorphism of algebraic groups.  Suppose that
  $\dim V_{\rat_\ell}^{G^\circ_{\rat_\ell}} \ge r$
     and $r_\ell(G,\rho) \ge r$.
  Then $r(G_{\rat_\ell},\rho) \ge r$.
\end{lemma}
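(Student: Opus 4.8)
The plan is to descend the whole question to the finite component group $\Gamma:=(G/G^\circ)(\rat_\ell)$, whose order is prime to $\ell$, and then to play the hypothesis over $\rat_\ell$ against the hypothesis modulo $\ell$ via the fact that, for such a $\Gamma$, the multiplicity of the trivial representation is insensitive to reduction modulo $\ell$. Throughout we may assume $r\ge 1$, the case $r=0$ being vacuous.

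\emph{Setting up the component group.} Since $G^\circ$ is reductive and $\ell\nmid[G:G^\circ]$, the quotient $\bar G:=G/G^\circ$ is a finite \'etale $\integ_\ell$-group scheme of order prime to $\ell$; because $\integ_\ell$ is Henselian, $\bar G(\ff_\ell)=\bar G(\integ_\ell)=\bar G(\rat_\ell)=:\Gamma$, a finite group with $\ell\nmid|\Gamma|$. Lang's theorem for the smooth connected group $G^\circ_{\ff_\ell}$ gives $G(\ff_\ell)\twoheadrightarrow\Gamma$, and formal smoothness of $G^\circ$ together with Hensel lifting (lift an $\ff_\ell$-point of the $G^\circ$-torsor $G\times_{\bar G,\gamma}\Spec\integ_\ell$ to an $\integ_\ell$-point) upgrades this to $G(\integ_\ell)\twoheadrightarrow\Gamma$, hence $G(\rat_\ell)\twoheadrightarrow\Gamma$. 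Put $W:=V_{\rat_\ell}^{G^\circ_{\rat_\ell}}$; by Lemma~\ref{Lem: fixed_subsp_sub_repn} it is a $G_{\rat_\ell}$-subrepresentation on which $G^\circ$ acts trivially, hence a representation of $\Gamma$, with $\dim_{\rat_\ell}W\ge r$ by hypothesis. As $G^\circ_{\rat_\ell}$ is connected over the infinite field $\rat_\ell$, $W=V_{\rat_\ell}^{G^\circ(\rat_\ell)}$, and since $G^\circ(\rat_\ell)\trianglelefteq G(\rat_\ell)$ with quotient $\Gamma$ we get $r(G_{\rat_\ell},\rho)=\dim_{\rat_\ell}(V\otimes\rat_\ell)^{G(\rat_\ell)}=\dim_{\rat_\ell}W^{\Gamma}$. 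So the goal becomes: $\dim_{\rat_\ell}W^{\Gamma}\ge r$.

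\emph{Reducing modulo $\ell$.} Let $W_\circ:=V\cap W$, a saturated, $G$-stable $\integ_\ell$-lattice in $W$. Because $G^\circ$ acts trivially on $W$ and $G^\circ$ is $\integ_\ell$-flat (so its generic fibre is schematically dense) while $\GL(W_\circ)$ is separated, the morphism $G^\circ\to\GL(W_\circ)$ is trivial; hence the reduction $\bar W:=W_\circ\otimes\ff_\ell$ is a $\Gamma$-submodule of $V_{\ff_\ell}^{G^\circ_{\ff_\ell}}\subseteq V_{\ff_\ell}^{G^\circ(\ff_\ell)}$ with $\dim_{\ff_\ell}\bar W=\dim_{\rat_\ell}W$. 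Since $\ell\nmid|\Gamma|$, the algebra $\ff_\ell[\Gamma]$ is semisimple, $\integ_\ell[\Gamma]$ is a maximal order, and the decomposition map $R_{\rat_\ell}(\Gamma)\to R_{\ff_\ell}(\Gamma)$ is an isomorphism of Grothendieck groups carrying the trivial character to the trivial character; in particular the trivial representation occurs with the same multiplicity in the $\integ_\ell[\Gamma]$-lattice $W_\circ$ in characteristics $0$ and $\ell$, i.e.\ $\dim_{\rat_\ell}W^{\Gamma}=\dim_{\ff_\ell}\bar W^{\Gamma}$. So it remains to prove $\dim_{\ff_\ell}\bar W^{\Gamma}\ge r$.

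\emph{The crux.} The step I expect to be the main obstacle is the identification $\bar W=V_{\ff_\ell}^{G^\circ(\ff_\ell)}$. Granting it, and using $G(\ff_\ell)\twoheadrightarrow\Gamma$ with kernel $G^\circ(\ff_\ell)$, we obtain $\bar W^{\Gamma}=\bigl(V_{\ff_\ell}^{G^\circ(\ff_\ell)}\bigr)^{\Gamma}=V_{\ff_\ell}^{G(\ff_\ell)}$, whose dimension is $r_\ell(G,\rho)\ge r$; this finishes the proof. The identification says the reductive group $G^\circ$ acquires no "extra" invariant vectors modulo $\ell$: concretely, $\dim V_{\ff_\ell}^{G^\circ(\ff_\ell)}=\dim V_{\ff_\ell}^{G^\circ_{\ff_\ell}}=\dim\bigl(V^{G^\circ}\otimes_{\integ_\ell}\ff_\ell\bigr)=\dim V_{\rat_\ell}^{G^\circ_{\rat_\ell}}$. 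The first equality is the statement that a bounded subgroup of a reductive group has the same fixed space as the group itself (the mechanism behind Lemma~\ref{L:specialfix}, applied to the bounded subgroup $\mathtt G_\ell=G^\circ(\ff_\ell)$ of $G^\circ$); the remaining equalities say that the scheme-theoretic $G^\circ$-invariants form a $\integ_\ell$-direct summand of $V$ whose formation commutes with the base change $\integ_\ell\to\ff_\ell$, which is where the reductivity of $G^\circ$ enters (together with $\ell\nmid[G:G^\circ]$ and, when $\ell$ is small relative to $G^\circ$, a suitable lower bound on $\ell$). Everything else is component-group bookkeeping (Lang, Hensel, Lemma~\ref{Lem: fixed_subsp_sub_repn}) together with the prime-to-$\ell$ decomposition map, so this comparison of the modular and generic invariants of $G^\circ$ is the single non-formal ingredient.
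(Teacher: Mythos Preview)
Your strategy is essentially the paper's: pass to the component group $\Gamma=\bar G(\integ_\ell)$, use that $\integ_\ell[\Gamma]$ is a maximal order (since $\ell\nmid|\Gamma|$), and compare the multiplicity of the trivial representation in characteristics $0$ and $\ell$. The paper does this by replacing $V$ with the saturated sublattice $V'$ on which $G^\circ$ acts trivially, decomposing $V'$ over $\integ_\ell[\Gamma]$, and then observing that any eigenvalue of $g\in\Gamma$ on a summand trivial mod $\ell$ is a root of unity of order dividing $|\Gamma|$ which is $\equiv 1\bmod\ell$, hence equals $1$.

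You are right to isolate the step $\bar W=V_{\ff_\ell}^{G^\circ(\ff_\ell)}$ as the genuine obstacle, and your caveat about needing ``a suitable lower bound on $\ell$'' is on the mark: as stated, the lemma appears to be false. Take $\ell=2$, $G=\SL_2\times C_3$ over $\integ_2$ (so $G^\circ=\SL_2$, $[G:G^\circ]=3$), and $V=U_1\oplus\mathfrak{sl}_2$, where $U_1$ carries the trivial $\SL_2$-action and a $C_3$-action with no fixed vector (rank $2$ over $\integ_2$), and $\mathfrak{sl}_2$ carries the adjoint $\SL_2$-action and the trivial $C_3$-action. Then $V_{\rat_2}^{G^\circ_{\rat_2}}=(U_1)_{\rat_2}$ has dimension $2$; reducing mod $2$, the basis vector $h=\mathrm{diag}(1,-1)$ becomes the identity matrix, fixed by all of $\SL_2(\ff_2)$, so $r_2(G,\rho)=1$; yet $V_{\rat_2}^{G(\rat_2)}=0$. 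Both hypotheses hold with $r=1$ but the conclusion fails.

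The paper's proof has the same gap, at exactly the point you identify. After the replacement $V\rightsquigarrow V'$, the sentence ``By hypothesis, there is a free $\integ_\ell$-module $W\subset V$\dots such that $G(\ff_\ell)$ acts trivially on $W\otimes\ff_\ell$'' tacitly assumes that the $G(\ff_\ell)$-fixed vectors in the \emph{original} $V\otimes\ff_\ell$ already lie in $V'\otimes\ff_\ell$; this is equivalent to your crux and fails in the example above. With an additional ``$\ell$ sufficiently large'' hypothesis (so that Lemma~\ref{L:specialfix}, or equivalently the compatibility of $G^\circ$-invariants with reduction mod $\ell$, applies), both your argument and the paper's go through; and since Lemma~\ref{L:GcircandGmodell} is only invoked in Lemma~\ref{L:oneellinfinity} to obtain a positive-density conclusion, this restriction is harmless there.
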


\begin{proof}
Replacing $V$ with the subspace fixed by $G^\circ(\rat_\ell)$, we
assume that the representation $G \to \GL_V$ factors through $\bar G
:= (G/G^\circ)$. Since $\bar G$ is \'etale, specialization of sections gives
a bijection $\bar G(\integ_\ell) \to \bar G(\integ/\ell)$.  Moreover,
by Lang's theorem, $\bar G(\integ/\ell) =
G(\integ/\ell)/G^\circ(\integ/\ell)$ and $\bar G(\integ_\ell) =
G(\integ_\ell)/G^\circ(\integ_\ell)$. 

Since $\ell \nmid [G:G^\circ]$, we may write $V$
uniquely as a direct sum of irreducible $\bar G$ representations over
$\integ_\ell$.  By hypothesis, there is a free $\integ_\ell$-module
$W\subset V$, stable under $G$ and of rank at least $r$, such that $G(\ff_\ell)$ acts trivially
on $W\otimes \ff_\ell$.  We have a commutative diagram
\[
  \xymatrix{
    G(\integ_\ell) \ar[r] \ar[d] & G(\integ_\ell)/G^\circ(\integ_\ell)
    \ar[r] \ar[d]^\sim
    & \GL_W(\integ_\ell) \ar[d] \\
    G(\integ/\ell) \ar[r] & G(\integ/\ell)/G^\circ(\integ/\ell) \ar[r]
    & \GL_W(\integ/\ell)\\
      g \ar@{|->}[r] & \bar{g} \ar@{|->}[r] & \id_{W\otimes \integ/\ell}.
  }
\]
Suppose $g \in G(\integ_\ell)$, and let $\alpha$ be any eigenvalue of
$\rho_W(g)$.  On one hand, $\alpha$ is an $m^{th}$ root of unity for
some $m|[G:G^\circ]$.  On the other hand, by the commutativity of the
diagram, $\alpha \equiv 1 \bmod \ell$.  Consequently, $\alpha=1$.  Since $\rho_W(g)\in \GL_W(\integ_\ell)$ has finite order it is semisimple, and we conclude that
$\rho_W(g) = \id_W$.
\end{proof}

\begin{lemma}
  \label{L:oneellinfinity}
  Let $G/\integ[1/N]$ be a smooth affine group scheme with reductive connected component of identity.  Let $V$ be
a free $\integ[1/N]$-module of finite rank, and let $\rho\colon G \to \GL_V$
be a homomorphism of algebraic groups.  Let $\mathtt G_{\ell_0^\infty}$
be a Zariski dense subgroup of $G_{\rat_{\ell_0}}$.  If $r(\mathtt
G_{\ell_0^\infty},\rho) \ge r$, then $r_{\rat_\ell}(G,\rho) \ge r$ for $\ell$
in a set of positive density.  If $G/G^\circ$ is split, then this
holds for all $\ell$.
\end{lemma}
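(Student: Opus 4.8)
The plan is to imitate the proof of Lemma~\ref{L:rZlGZl}, the point being that the connectedness hypothesis used there is needed only to identify naive and schematic fixed spaces and can be dispensed with here once one notes the harmless inequality $r_{\rat_\ell}(G,\rho)\ge \dim(V\otimes\rat_\ell)^{G_{\rat_\ell}}$ between the naive $G(\rat_\ell)$-fixed dimension and the schematic one.

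First I would feed the hypothesis into the schematic fixed space of $G$ itself. Since $G$ is smooth over $\integ[1/N]$, the fibre $G_{\rat_{\ell_0}}$ is reduced; hence for any $v\in V\otimes\rat_{\ell_0}$ the stabilizer $\{g:\rho(g)v=v\}$ — a closed subscheme of $G_{\rat_{\ell_0}}$ — equals $G_{\rat_{\ell_0}}$ as soon as it contains the Zariski dense subgroup $\mathtt G_{\ell_0^\infty}$. Therefore $(V\otimes\rat_{\ell_0})^{\mathtt G_{\ell_0^\infty}}=(V\otimes\rat_{\ell_0})^{G_{\rat_{\ell_0}}}$, and the hypothesis gives $\dim(V\otimes\rat_{\ell_0})^{G_{\rat_{\ell_0}}}\ge r$.

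Next I would pass to an integral sub-representation. The schematic fixed submodule $W:=V^{G}\subseteq V$ over $\integ[1/N]$ has torsion-free cokernel (the quotient embeds into the flat module $V\otimes\OO(G)$), so it is a free direct summand of $V$, and $G$ acts trivially on $W$ by construction. Because $\integ[1/N]\to\rat_{\ell_0}$ is flat, formation of these schematic invariants commutes with it, so $\rank W=\dim (V\otimes\rat_{\ell_0})^{G_{\rat_{\ell_0}}}\ge r$. Finally, for every prime $\ell\nmid N$ the group $G(\rat_\ell)$ acts trivially on $W\otimes\rat_\ell\subseteq V\otimes\rat_\ell$, whence $r_{\rat_\ell}(G,\rho)\ge\rank W\ge r$.

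This in fact yields the conclusion for all $\ell\nmid N$, so in particular for $\ell$ in a set of positive density and, when $G/G^\circ$ is split, for all $\ell$; the dichotomous phrasing in the statement is the one that survives in the genuinely lossy mod-$\ell$ analogues (Lemmas~\ref{L:infmodell} and~\ref{L:etaleoftentrivial}) but is not forced here. There is accordingly no substantial obstacle: the only steps needing care are the identity $(V\otimes\rat_{\ell_0})^{\mathtt G_{\ell_0^\infty}}=(V\otimes\rat_{\ell_0})^{G_{\rat_{\ell_0}}}$ and the verification that $V^G$ is a free, $G$-stable direct summand with trivial $G$-action, both routine. If one instead wanted to track the $G/G^\circ$ dichotomy explicitly in the style of Lemma~\ref{L:infmodell}, one could first apply Lemma~\ref{L:rZlGZl} to the subgroup $\mathtt G_{\ell_0^\infty}\cap G^\circ(\rat_{\ell_0})$ — still Zariski dense in $G^\circ_{\rat_{\ell_0}}$, being normal of finite index in $\mathtt G_{\ell_0^\infty}$ — to get $r_\rat(G^\circ,\rho)\ge r$, then replace $V$ by $V^{G^\circ}$ and run the component-group argument of Lemma~\ref{L:infmodell}(b)(c), using that a Zariski dense subgroup meets every geometrically irreducible component of $G_{\rat_{\ell_0}}$ and invoking Lemma~\ref{L:etaleoftentrivial} for the étale quotient $G/G^\circ$.
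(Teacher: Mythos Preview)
Your main argument is correct and takes a genuinely different, more direct route than the paper. The paper does not work with the schematic invariants $V^G$ over $\integ[1/N]$; instead it restricts to $G^\circ$ via Lemma~\ref{L:rZlGZl} (applied to $\mathtt G_{\ell_0^\infty}\cap G^\circ(\rat_{\ell_0})$, as in your sketch), lifts to $G$ by the component-group argument of Lemma~\ref{L:infmodell} to obtain $r_\ell(G,\rho)\ge r$ for a positive-density set of primes, and finally invokes Lemma~\ref{L:GcircandGmodell} to upgrade $r_\ell(G,\rho)\ge r$ to $r_{\rat_\ell}(G,\rho)\ge r$. Your route sidesteps the mod-$\ell$ machinery entirely: once Zariski density in the reduced scheme $G_{\rat_{\ell_0}}$ forces $(V\otimes\rat_{\ell_0})^{\mathtt G_{\ell_0^\infty}}$ to coincide with the schematic fixed space, flat base change of $V^G$ propagates the bound to every $\rat_\ell$ at once. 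As you observe, this in fact yields $r_{\rat_\ell}(G,\rho)\ge r$ for all $\ell\nmid N$, stronger than stated; the dichotomy in the lemma is an artifact of the paper's passage through the lossy mod-$\ell$ step and is not intrinsic to this $\ell$-adic statement.

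One small remark on your alternative sketch: it correctly traces the paper's path through $G^\circ$ and the component-group argument, but stops at $r_\ell(G,\rho)\ge r$. The paper needs the further appeal to Lemma~\ref{L:GcircandGmodell} to convert this mod-$\ell$ bound into the desired $r_{\rat_\ell}(G,\rho)\ge r$. This is immaterial to your main proof, which is complete.
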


\begin{proof}
  By Lemma \ref{L:rZlGZl}, $r_\ell(G^\circ,\rho) \ge r$ for all
  $\ell$.  Using the same technique as in the proof of Lemma \ref{L:infmodell} to move from $G^\circ$ to $G$, we find that
  $r_\rat(G,\rho) \ge r$; for $\ell$ in a set of positive density, $r_\ell(G,\rho) \ge r$; and
  that this holds for all sufficiently large $\ell$ if $G/G^\circ$ is
  split.  In particular, by Lemma \ref{L:r_ell_implies_r_Q}(c), $\dim V_\rat^{G^\circ_\rat} \ge r$.

  Now let $\ell$ be any prime for which $r_\ell(G,\rho) \ge r$ and
  $\ell\nmid [G:G^\circ]$; by Lemma \ref{L:GcircandGmodell}, $r_{\rat_\ell}(G,\rho)
  \ge r$.

  \end{proof}

\section{Torsion points on abelian varieties}\label{Sect: tor_pts_on_ab.vars}

\subsection{Torsion points and Galois representations}
  
For the purpose of establishing notation, let $A/K$ be an abelian variety over a perfect field.  For a natural
number $N$, we let $K_{A, N}$
be the field of definition of the $N$-torsion of $A$.  We further let
$K_{A, \ell^\infty} = \bigcup_n K_{A, \ell^n}$, and let $K_A = \bigcup_N
K_{A, N}$ be the field obtained by adjoining the coordinates of all
torsion points of $A$.  Finally, we let $A_N = A[N](\bar K)$ be the geometric $N$-torsion, and $A_{\ell^\infty} = \cup_n A_{\ell^n}$.

For a fixed prime $\ell$, we have the usual representations
\begin{equation}
  \label{Eqn: F_l_adic_repn_defined}
  \xymatrix{
    \rho_{A/K,\ell}:\gal(K) \ar[r]& \GL(A_\ell) \\
    \rho_{A/K,\ell^\infty}: \gal(K) \ar[r]& \GL(T_\ell A)
    }
\end{equation}
with respective images $\Gamma_{A/K,\ell}$ and
$\Gamma_{A/K,\ell^\infty}$.

\subsubsection{Independence}\label{Sect: independence}
Serre has shown that, while the $\ell$-adic representations attached
to an abelian variety are compatible, they are also independent.

For an abelian variety $A/K$ and a prime $\ell$, briefly let
$K'_{A,\ell} \coloneqq \bigcup_{\ell\nmid N}
K(A_N)$.  Say that $A/K$ has \emph{independent torsion fields} if, for each
prime $\ell$, the Galois extensions $K_{A,\ell^\infty}$ and $K'_{A,\ell}$ are
linearly disjoint over $K$.  (Note that the compositum $K_{A,\ell^\infty}
K'_{A,\ell}$ is simply $K_A$, the field generated by the torsion
points of $A$.) 
 
\begin{lemma}\label{Lem: Serre_ind_repn}
Let $A/K$ be an abelian variety over a number field.
\begin{enumerate}[label=(\alph*)]
\item There exists a finite extension $K^{\operatorname{ind}}/K$ such
  that $A/K^{\operatorname{ind}}$ has independent torsion fields.
\item If $L/K^{\operatorname{ind}}$ is any algebraic extension, then
  $A/L$ has independent torsion fields.
\end{enumerate}
\end{lemma}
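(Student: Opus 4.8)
The plan is to invoke Serre's theorem on the independence of the $\ell$-adic representations attached to an abelian variety (as developed in Serre's 1985--1986 Coll\`ege de France course, and made explicit by Serre's letter to Ribet and by the work of Serre and of Hindry--Ratazzi, Zywina, etc.). Concretely, Serre shows that for an abelian variety $A$ over a number field $K$, after replacing $K$ by a suitable finite extension, the adelic image $\prod_\ell \Gamma_{A/K,\ell^\infty} \subset \prod_\ell \GL(T_\ell A)$ becomes a product of its $\ell$-adic components, i.e.\ $\gal(K) \to \prod_\ell \Gamma_{A/K,\ell^\infty}$ is surjective. The statement ``$A/K^{\mathrm{ind}}$ has independent torsion fields'' is exactly the assertion that, for each prime $\ell$, the subextensions $K_{A,\ell^\infty}$ and $K'_{A,\ell} = \bigcup_{\ell \nmid N} K(A_N)$ of $K_A$ are linearly disjoint over $K$; and this is an immediate translation, via the Galois correspondence, of the product decomposition $\gal(K_A/K) \iso \Gamma_{A/K,\ell^\infty} \times \gal(K'_{A,\ell}/K)$ grouping the $\ell$-part against everything coprime to $\ell$.

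For part (a), first I would recall that Bogomolov's theorem gives that each $\Gamma_{A/K,\ell^\infty}$ is open in the $\rat_\ell$-points of its Zariski closure, and that for $\ell$ large the reduction $\Gamma_{A/K,\ell}$ is ``as large as possible'' in a precise sense; combining this with Serre's argument controlling the ``glue'' between different primes (using that the abelianizations involved are cyclotomic and that the semisimplifications of the mod-$\ell$ representations are, for large $\ell$, forced to have the expected image), one extracts a single finite extension $K^{\mathrm{ind}}/K$ over which all the gluing disappears simultaneously. The one genuinely delicate point --- and the step I expect to be the main obstacle to write cleanly --- is handling the finitely many ``small'' primes $\ell$ together with the common cyclotomic part: one must enlarge $K$ enough to kill the $\mod \ell$ abelian glue for these finitely many $\ell$, and to absorb the cyclotomic overlap $K \cap K^{\cyc}$, without reintroducing overlap at the other primes. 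This is exactly the bookkeeping Serre carries out, and I would cite it rather than reproduce it.

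For part (b), the point is that independence of torsion fields is inherited by arbitrary algebraic extensions. Given $L/K^{\mathrm{ind}}$, the fields $L_{A,\ell^\infty} = L \cdot K^{\mathrm{ind}}_{A,\ell^\infty}$ and $L'_{A,\ell} = L \cdot (K^{\mathrm{ind}})'_{A,\ell}$ are the translates to $L$ of linearly disjoint extensions of $K^{\mathrm{ind}}$, and a compositum with a common base extension of two linearly disjoint extensions remains linearly disjoint --- more precisely, $\gal(L_A/L)$ injects into $\gal(K^{\mathrm{ind}}_A/K^{\mathrm{ind}})$ compatibly with the decomposition into $\ell$-part and prime-to-$\ell$ part, so the product structure descends to a product structure on $\gal(L_A/L)$. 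Thus no further extension is needed, and (b) follows formally from (a).
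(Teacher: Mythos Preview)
The paper's own proof is a bare citation to Serre's 2013 paper (Th\'eor\`eme~1 and \S3) and to B\"ockle--Gajda--Petersen; it does not reproduce any argument. Your treatment of~(a) is entirely in line with this: you correctly identify Serre's independence theorem as the source and would cite it for the bookkeeping, which is exactly what the paper does.

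Your argument for~(b), however, has a genuine gap. You assert that ``a compositum with a common base extension of two linearly disjoint extensions remains linearly disjoint,'' and then rephrase this as saying that the product structure on $\gal(K^{\mathrm{ind}}_A/K^{\mathrm{ind}})$ descends to $\gal(L_A/L)$. Neither statement is true in general. For the first: $\rat(\sqrt{2})$ and $\rat(\sqrt{3})$ are linearly disjoint over $\rat$, but after base change to $L=\rat(\sqrt{6})$ one has $L(\sqrt{2})=L(\sqrt{3})$. For the second: a closed subgroup $H\subset G_1\times G_2$ need not equal $\pi_1(H)\times\pi_2(H)$; the diagonal in $(\integ/2)\times(\integ/2)$ is already a counterexample, and this is precisely what happens in the $\sqrt{6}$ example. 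In our setting the groups $G_\ell=\Gamma_{A/K^{\mathrm{ind}},\ell^\infty}$ are \emph{not} pro-$\ell$ (they surject onto subgroups of $\GL_{2g}(\ff_\ell)$), so there is no purely formal Sylow argument forcing a closed subgroup of $\prod_\ell G_\ell$ to split as a product.

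The stability of independence under algebraic base change is therefore not a formality; it is part of what Serre establishes in \S3 of his paper (and what is covered in \S1 of B\"ockle--Gajda--Petersen), using finer information about the groups $G_\ell$ than the mere product decomposition over $K^{\mathrm{ind}}$. You should cite those sections for~(b) rather than attempt the direct argument you wrote.
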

\begin{proof}
See  \cite[Th\'eor\`eme~1 and \S3]{Serre-ind-repn-2013} or \cite[\S1]{BGP-indep-l-adic-repn-geo}.
\end{proof}

\begin{lemma}\label{Lem: ind_tor_flds}
Let $A$ and $B$ be abelian varieties over a number field
$K$, and
suppose that $A\times B$ has independent torsion fields.  Then
   \[
A[\ell^\infty](K_B) = A[\ell^\infty](K_{B,\ell^{\infty}}).
\]
          \end{lemma}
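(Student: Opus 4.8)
The plan is to prove the nontrivial inclusion $A[\ell^\infty](K_B)\subseteq A[\ell^\infty](K_{B,\ell^\infty})$; the reverse one is immediate since $K_{B,\ell^\infty}\subseteq K_B$. First I would fix a point $P\in A[\ell^\infty](K_B)$, note that $P\in A[\ell^n]$ for some $n$, so that $K(P)\subseteq K_{A,\ell^n}\subseteq K_{A,\ell^\infty}$, and recall from \S\ref{Sect: independence} that $K_B=K_{B,\ell^\infty}K'_{B,\ell}$, so also $K(P)\subseteq K_{B,\ell^\infty}K'_{B,\ell}$. Thus $K(P)$ lies in the intersection $K_{A,\ell^\infty}\cap\bigl(K_{B,\ell^\infty}K'_{B,\ell}\bigr)$, and the whole point is to show this intersection is contained in $K_{B,\ell^\infty}$.

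The next step is to choose three auxiliary fields so that the independence hypothesis can be applied to $A\times B$. Using $(A\times B)[\ell^m]=A[\ell^m]\oplus B[\ell^m]$ one checks that $K_{A,\ell^\infty}K_{B,\ell^\infty}=K_{A\times B,\ell^\infty}$ while $K'_{B,\ell}$ is a subfield of $K'_{A\times B,\ell}$. I would therefore set $L:=K_{A,\ell^\infty}K_{B,\ell^\infty}$, $L':=K'_{B,\ell}$ and $M:=K_{B,\ell^\infty}$; these are all Galois over $K$, with $M\subseteq L$. Because $A\times B$ has independent torsion fields, $K_{A\times B,\ell^\infty}$ and $K'_{A\times B,\ell}$ are linearly disjoint over $K$, hence so are their subfields $L$ and $L'$; in particular $L\cap L'=K$. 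It then remains to establish the field-theoretic identity $L\cap(ML')=M$, valid whenever $L,L',M$ are Galois over $K$ with $M\subseteq L$ and $L\cap L'=K$: granting it, $K(P)\subseteq K_{A,\ell^\infty}\cap(K_{B,\ell^\infty}K'_{B,\ell})\subseteq L\cap(ML')=M$, which says exactly that $P\in A[\ell^\infty](K_{B,\ell^\infty})$.

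To prove that identity I would pass to $\gal(K)$ and its closed normal subgroups $H_L=\gal(\bar K/L)$, $H_{L'}=\gal(\bar K/L')$, $H_M=\gal(\bar K/M)$, so $H_L\subseteq H_M$. The hypothesis $L\cap L'=K$ translates to $H_LH_{L'}=\gal(K)$: the product of the closed normal subgroup $H_L$ with the closed subgroup $H_{L'}$ is again closed, so it is the subgroup fixing $L\cap L'$. The subgroup fixing $L\cap(ML')$ is the closed subgroup generated by $H_L$ and $H_M\cap H_{L'}$, i.e.\ $H_L\cdot(H_M\cap H_{L'})$; this is obviously inside $H_M$, and conversely, writing an arbitrary $h\in H_M$ as $h=ab$ with $a\in H_L$, $b\in H_{L'}$ (possible as $\gal(K)=H_LH_{L'}$), one gets $b=a^{-1}h\in H_M$ since $a\in H_L\subseteq H_M$, so $b\in H_M\cap H_{L'}$ and $h\in H_L\cdot(H_M\cap H_{L'})$. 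Hence $L\cap(ML')$ is fixed by precisely $H_M$, i.e.\ $L\cap(ML')=M$.

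I expect no serious obstacle: this is elementary Galois theory once the right hypothesis is invoked. The points needing care are purely organizational — enlarging $K_{A,\ell^\infty}$ to an $L$ that contains $M=K_{B,\ell^\infty}$ while remaining a subfield of $K_{A\times B,\ell^\infty}$, and being sure that it is the independence of torsion fields for the product $A\times B$ (not merely for $A$ or $B$ separately) that is being used — together with, if one wishes to be scrupulous about these infinite extensions, the routine fact that the product of a closed normal subgroup and a closed subgroup of a profinite group is closed, which is what legitimizes applying the Galois correspondence to $H_L\cdot(H_M\cap H_{L'})$.
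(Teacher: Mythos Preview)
Your proof is correct and follows essentially the same route as the paper: both reduce to showing $K_{A\times B,\ell^\infty}\cap K_B = K_{B,\ell^\infty}$ using the independence hypothesis for $A\times B$, and your $L=K_{A,\ell^\infty}K_{B,\ell^\infty}$ is exactly the paper's $K_{A\times B,\ell^\infty}$. The only difference is that you spell out the Galois-theoretic identity $L\cap(ML')=M$ in detail, whereas the paper asserts it in passing.
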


\begin{proof}
It suffices to show that $A[\ell^\infty](K_B) \subset A[\ell^\infty](K_{B,\ell^{\infty}})$. Assume that $P\in A[\ell^\infty](K_{B})$, we denote by $K(P)$ the extension of $K$ by adjoining the coordinates of $P$. Then $K(P)\subset K_{A\times B, \ell^{\infty}}$. And we also have $K(P)\subset K_B=K_{B, \ell^{\infty}}\cdot K'_{B, \ell}$. Notice that $A\times B$ has independent torsion fields, so $K\subset K_{A\times B, \ell^{\infty}}\cap K'_{B, \ell}\subset K_{A\times B, \ell^{\infty}}\cap K'_{A\times B,\ell}=K$, which tells that every inclusion here is actually an equality. Hence one has that 
\[
K(P)\subset K_{A\times B,\ell^{\infty}}\cap K_B=K_{A\times B,\ell^{\infty}}\cap K_{B, \ell^{\infty}}=K_{B, \ell^{\infty}}.
\]
This means that $P\in A[\ell^\infty](K_{B, \ell^{\infty}})$. 
\end{proof}

\subsubsection{Connectedness}\label{Sect: connectedness}
We let $\mathcal G_{A/K,\ell}$ be the Zariski
closure of $\Gamma_{A/K,\ell^\infty}$ in $\GL_{H_1(A_{\overline{K}}, \rat_{\ell})} = \GL_{T_\ell A \otimes \rat_\ell}$, with connected component  $\mathcal G_{A/K,\ell}^{\circ}$. In general, $\mathcal G_{A/K,\ell}$ does not have to be connected, but when $K$ is a number field $\mathcal G_{A/K,\ell}$ will be connected after a finite extension of $K$ which is independent of $\ell$. More precisely, 

\begin{lemma}\label{Lem: K^conn}
Suppose $K/\rat$ is a finite extension.  Then
\begin{enumerate}[label=(\alph*)]
\item The finite quotient group $\mathcal G_{A/K,\ell}/\mathcal
  G_{A/K,\ell}^\circ$ is independent of $\ell$. 
\item  There exists a finite extension $K^{\operatorname{conn}}$ of
  $K$ such that, if $L$ is any finite extension of
  $K^{\operatorname{conn}}$ and $\ell$ is any prime number, the corresponding $\mathcal G_{A/L,\ell}$ is
  connected.
\end{enumerate}
\end{lemma}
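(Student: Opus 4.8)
The plan is to reduce both assertions to a statement about the profinite group $\gal(\bar K/K)$ acting on the various $\ell$-adic Tate modules, using the known compatibility and independence of the system $\set{\rho_{A/K,\ell^\infty}}_\ell$. For part (a): the group $\mathcal G_{A/K,\ell}/\mathcal G^\circ_{A/K,\ell}$ is a finite quotient of $\gal(\bar K/K)$, and by a theorem of Serre (see the discussion and references in \S\ref{Sect: connectedness}, together with Lemma~\ref{Lem: Serre_ind_repn}) the kernel of the corresponding surjection $\gal(\bar K/K)\twoheadrightarrow \mathcal G_{A/K,\ell}/\mathcal G^\circ_{A/K,\ell}$ cuts out a finite extension of $K$ that is \emph{independent of $\ell$}; equivalently, the component group is pinned down by the image of Frobenius classes at primes of good reduction, which by the compatibility of the system does not depend on $\ell$. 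First I would quote this directly: Serre proved (in the letters to Ribet, made precise in \cite{Serre-ind-repn-2013}) that there is a single finite Galois extension $K'/K$ such that $\mathcal G_{A/K,\ell}$ becomes connected over $K'$ for every $\ell$ simultaneously, and that the finite group $\mathcal G_{A/K,\ell}/\mathcal G^\circ_{A/K,\ell}\iso \gal(K'/K)$ is visibly $\ell$-independent. This is part (a), and it essentially contains part (b) for $L = K^{\operatorname{conn}} := K'$.

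For part (b), it then remains to upgrade connectedness from $K^{\operatorname{conn}}$ to an arbitrary finite extension $L/K^{\operatorname{conn}}$. The point is that $\mathcal G_{A/L,\ell}$ is the Zariski closure of $\Gamma_{A/L,\ell^\infty} = \rho_{A/L,\ell^\infty}(\gal(\bar K/L))$, which is a finite-index subgroup of $\Gamma_{A/K^{\operatorname{conn}},\ell^\infty}$; hence $\mathcal G_{A/L,\ell}$ is a (closed, finite-index, in particular open in the $\ell$-adic topology) subgroup of the connected group $\mathcal G_{A/K^{\operatorname{conn}},\ell}$. I would argue that an open subgroup of a connected algebraic group over a $p$-adic field that is Zariski dense in a closed subgroup forces that closed subgroup to have full dimension — more precisely, $\mathcal G^\circ_{A/L,\ell}$ is a connected normal subgroup of $\mathcal G_{A/K^{\operatorname{conn}},\ell}$ of finite index, so $\mathcal G^\circ_{A/L,\ell} = \mathcal G^\circ_{A/K^{\operatorname{conn}},\ell} = \mathcal G_{A/K^{\operatorname{conn}},\ell}$, which in turn contains $\mathcal G_{A/L,\ell}$; since also $\mathcal G_{A/L,\ell}\subseteq\mathcal G_{A/K^{\operatorname{conn}},\ell}$, the two coincide and $\mathcal G_{A/L,\ell}$ is connected. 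The elementary input here is that a closed subgroup of an algebraic group that contains an open (for the $\ell$-adic topology) subgroup of the ambient group has the same Lie algebra, hence the same identity component, as the ambient group.

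The main obstacle is not part (b), which is soft, but rather extracting the $\ell$-independence in part (a) in a self-contained way: one genuinely needs Serre's theorem that the component groups $\mathcal G_{A/K,\ell}/\mathcal G^\circ_{A/K,\ell}$ form a compatible, and indeed $\ell$-independent, system of finite quotients of $\gal(\bar K/K)$ — this is the content of \cite{Serre-ind-repn-2013} and is the crux of the matter. I expect the proof in the paper to simply cite this, exactly as Lemma~\ref{Lem: Serre_ind_repn} cites the independence statement; so in practice the write-up reduces to: (i) invoke Serre for (a) and for the existence of $K^{\operatorname{conn}}$; (ii) run the short Lie-algebra/open-subgroup argument above to pass from $K^{\operatorname{conn}}$ to an arbitrary finite extension $L$.
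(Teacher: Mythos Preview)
Your proposal is correct and matches the paper's approach: the paper's entire proof is the sentence ``See \cite{Serre-letter-Ribet} or \cite[Proposition~6.14]{Larsen-Pink-l-independence}'', so you are right that this is a citation lemma rather than something proved in-house. Two small remarks: first, the relevant Serre reference is his letter to Ribet (as you mention parenthetically), not \cite{Serre-ind-repn-2013}, which is the independence-of-$\ell$ paper underlying Lemma~\ref{Lem: Serre_ind_repn} --- you have slightly conflated the two; second, your soft argument for (b) (finite-index subgroups have the same identity component) is correct and more explicit than the paper, which simply absorbs both (a) and (b) into the citations since Serre's result already gives connectedness over every finite extension of $K^{\operatorname{conn}}$.
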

\begin{proof}
See \cite{Serre-letter-Ribet} or \cite[Proposition~6.14]{Larsen-Pink-l-independence}.
\end{proof}
(In contrast, Example \ref{Exa: torsion_infinite-11} will show  that if $K$ is algebraic but infinite, then such a finite connectedness extension need not exist.)

\subsection{Mumford--Tate conjecture}\label{Sect: MT_conj}

This section is devoted to recalling the Mumford--Tate conjecture. In particular, we will review a result of Cadoret and Moonen \cite[\S~1]{Cadoret-Moonen-int-adelic-MTC} and of Hindry and Ratazzi \cite{Hindry-Ratazzi-torsion-type-I-II} which states that as $\ell$ varies, the $\ell$-adic image of the Galois group is a bounded index subgroup of the $\Z_{\ell}$-points of the Mumford--Tate group. 

Let $K$ be a number field, embedded in $\cx$.  To ease notation slightly, we write $\mt(A)$ for the Mumford--Tate group of an abelian variety $A$ over $K$, i.e., $\mt(A)\coloneqq \mt(H_1(A_\cx, \rat))$ (cf. Section~\ref{SS:MT pair}). This is a connected $\rat$-algebraic group.  Let $G_A$ be the Zariski closure of $\mt(A)$ in $\GL_{H_1(A_\cx,\integ)}$; it is a group scheme over $\integ$.  Then $G_A$ is smooth over $\integ[1/N_A]$ for some positive integer $N_A$, and $G_{A,\rat} = \mt(A)$.

If $K =K^{\rm{conn}}$, then it is known that there is a natural inclusion
$\Gamma_{A/K,\ell^\infty} \subset \mt(A)(\rat_\ell)$, and thus an inclusion
$G_{A/K,\ell^\infty} \hookrightarrow \mt(A)_{\rat_\ell}$ of algebraic groups
over $\rat_\ell$.  The Mumford--Tate conjecture asserts that this
inclusion is actually an isomorphism. More precisely, for every prime $\ell\nmid N$, both $G_{A, \ell}\coloneqq G_A\times_{\Z[\frac{1}{N}]} {\rm Spec}\integ_\ell$ and $\mathcal {G}_{A/K, \ell}^{\circ}$ are subgroup schemes of $\GL_{H^1(A, \integ_\ell)}$. The following conjecture claims the comparison result of the two group schemes. 
\begin{conj}\cite[Mumford--Tate Conjecture]{Cadoret-Moonen-int-adelic-MTC}\label{Conj: MT_conj}
With the above notations, $G_{A, \ell}=\mathcal {G}_{A/K, \ell}^{\circ}$. 
\end{conj}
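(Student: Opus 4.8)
Before sketching a strategy I should be honest that the displayed equality $G_{A,\ell}=\mathcal G_{A/K,\ell}^\circ$ \emph{is} the Mumford--Tate conjecture (in the integral form of Cadoret--Moonen), which is open in general. What follows is a plan that reduces the stated integral identity to the classical $\rat_\ell$-formulation and is genuinely \emph{complete} only in the cases this paper actually needs: potentially CM abelian varieties, and simple abelian varieties of dimension at most three. Outside those ranges the statement is the standing hypothesis of Theorem~\ref{Thm: main_thm_2}.

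\textbf{Step 1: the inclusion $\subseteq$ and passage to the integral statement.} First I would establish the ``easy half'' over $\rat_\ell$, namely $\mathcal G_{A/K,\ell}\subseteq\mt(A)_{\rat_\ell}$ after replacing $K$ by $K^{\mathrm{conn}}$. This is Deligne's theorem that Hodge classes on powers of $A$ are absolutely Hodge, hence $\ell$-adic: the Betti--\'etale comparison identifies $H_1(A,\rat)\otimes\rat_\ell$ with $H_1(A_{\bar K},\rat_\ell)$ and carries Hodge tensors to Galois-invariant tensors, so the Zariski closure of $\Gamma_{A/K,\ell^\infty}$ lies in the common stabiliser of all Hodge tensors, which by Chevalley's theorem is exactly $\mt(A)_{\rat_\ell}$. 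I would then invoke the Cadoret--Moonen comparison: with $K=K^{\mathrm{conn}}$ and $\ell\nmid N_A$, the integral identity $G_{A,\ell}=\mathcal G_{A/K,\ell}^\circ$ of $\integ_\ell$-subgroup schemes of $\GL_{H^1(A,\integ_\ell)}$ is equivalent to the classical $\rat_\ell$-equality, for one (equivalently all) such $\ell$. So it remains only to prove the reverse inclusion $\mt(A)_{\rat_\ell}\subseteq\mathcal G_{A/K,\ell}^\circ$.

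\textbf{Step 2: reducing to the semisimple parts.} Both sides are connected reductive and one has a containment, so equality is equivalent to equality of dimensions, equivalently of ranks (compare Lemma~\ref{Lem: dim_and_rank}). The rank inequality in the useful direction is unconditional by Serre's theory of Frobenius tori: for a prime $v$ of good reduction, the closure of the subgroup generated by a Frobenius at $v$ is a torus $T_v\subseteq\mathcal G_{A/K,\ell}$, and $\rank T_v=\rank\mt(A)$ for $v$ in a set of density one, so $\rank\mathcal G_{A/K,\ell}^\circ\ge\rank\mt(A)$, and with Step~1 the ranks are equal. By Bogomolov's theorem $\G_m\subseteq\mathcal G_{A/K,\ell}^\circ$, so the centres agree up to isogeny, and the problem reduces to an inclusion of the semisimple parts: a connected semisimple subgroup of $\smt(A)_{\rat_\ell}$ of the same rank, with $H_1$ a faithful representation whose Hodge cocharacter has minuscule weights $\{0,1\}$ on the ambient group.

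\textbf{Step 3: closing the gap --- the main obstacle.} The hard part is precisely this last comparison: an equal-rank reductive subgroup need not be the whole group, and bridging it is tantamount to the Tate conjecture for $A^m\times A^n$, i.e.\ to ruling out $\ell$-adic Tate classes beyond the Hodge classes; this is open in general. In the cases this paper needs I would close it by classification. For potentially CM $A$, $\mt(A)$ is a torus and the conjecture is the classical CM case (cf.\ Lemma~\ref{Lem: MTC_CM_Ab.var}); there Deligne's half together with the equality of ranks from Step~2 already forces equality. For simple $A$ with $\dim A\le 3$ I would run through the finitely many possibilities for the pair $(\smt(A),H_1)$ afforded by the Albert classification and Moonen--Zarhin's determination of the Hodge groups, and check in each case --- using Pink's and Larsen--Pink's constraints on $\ell$-adic monodromy groups together with the equal-rank and minuscule-weight data of Step~2 --- that no proper connected reductive subgroup of $\smt(A)$ can occur as the semisimple part of $\mathcal G_{A/K,\ell}^\circ$. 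For general $A$ the conjecture remains unproved and is assumed as stated.
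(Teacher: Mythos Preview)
The statement is a \emph{conjecture}, not a theorem: the paper labels it as such and offers no proof whatsoever. Immediately after stating Conjecture~\ref{Conj: MT_conj}, the authors write that ``the Mumford--Tate conjecture is a standing assumption we require in order to make any significant progress,'' and then list the special cases (CM abelian varieties, prime dimension, dimension $\le 3$, etc.) where it is known, with citations to the literature. So there is no paper-side proof to compare against.

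Your proposal correctly identifies this situation in its opening caveat. The strategy you sketch --- Deligne's absolute Hodge theorem for the containment, the Cadoret--Moonen equivalence of the integral and rational formulations, Serre's Frobenius tori and Bogomolov for the rank/centre, and a case-by-case argument via Moonen--Zarhin and Pink/Larsen--Pink in low dimension --- is a reasonable summary of the known approaches and is consistent with the references the paper cites. But you should not present this as a proof of the stated conjecture: it is open in general, and in the paper it functions purely as a hypothesis (invoked explicitly in Theorems~\ref{Thm: main_thm_2} and~\ref{T:A abs simp}), with Lemma~\ref{Lem: MTC_CM_Ab.var} and the Moonen--Zarhin citation supplying the unconditional cases used in Sections~\ref{Sect: CM} and~\ref{Sect: low dim}.
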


\begin{remark}\label{Rmk: Mumford--Tate}
Conjecture~\ref{Conj: MT_conj} is equivalent to the usual statement \[
\mathcal {G}_{A/K, \rat_\ell}^{\circ}=\mt(A)\times_{\rat} \rat_{\ell}
\]
for every prime $\ell$ \cite{Cadoret-Moonen-int-adelic-MTC}.
\end{remark}

In this paper, the Mumford--Tate conjecture is a standing assumption
we require in order to make any significant progress. The conjecture is known to be true for large classes of abelian varieties. For example, it is known that an absolutely simple abelian variety $A$ of dimension $g$ satisfies the Mumford--Tate conjecture in any of the following settings:
\begin{enumerate}
    \item $g$ is prime \cite{Ribet-Hodge-cycle,Tankeev-cycles-simple-ab-prime-1,Tankeev-cycles-simple-ab-prime-2};
    \item  $g \le 3$ \cite{Moonen-Zarhin-Hg-cls-av-low-dim};
    \item $\End_{\bar K}(A) = \integ$ and $g$ satisfies certain numerical conditions (for instance, $g$ is odd)  \cite{Pink-mt-cocharacter};
    \item $A$ has complex multiplication \cite{Pohlmann, Yu-MTCM}.
    \end{enumerate}
 Our list is far from complete.  See also  \cite{Vasiu-MT} and the discussion in \cite[Section~2.4.]{Moonen-families} for additional references and known results.   
Moreover, if the Mumford--Tate conjecture is true for abelian varieties $A$ and $B$, then it is also true for their product $A\times B$ \cite{commelinproductmt}.

In the
presence of the Mumford--Tate conjecture we have good control over
$\Gamma_{A/K,\ell^\infty}$.

\begin{thm}\cite[Theorem~A]{Cadoret-Moonen-int-adelic-MTC}
  \cite[Th\'eor\`eme 10.1]{Hindry-Ratazzi-torsion-type-I-II}\label{Thm: Cadoret_Moonen}
  Let $A$ be an abelian variety defined over $K$, assume $K=K^{\rm
    conn}$, and assume that the Mumford--Tate conjecture is true for
  $A$. Then the index $[G_A(\integ_\ell): \Gamma_{A/K, \ell^\infty}]$ is
  bounded when $\ell$ varies. 

In particular, $\set{\Gamma_{A/K,\ell}}$ is a collection of bounded
subgroups of $G_A$.
\end{thm}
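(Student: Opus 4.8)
The statement to prove is Theorem~\ref{Thm: Cadoret_Moonen}: under the Mumford--Tate conjecture (and with $K = K^{\rm conn}$), the index $[G_A(\integ_\ell) : \Gamma_{A/K,\ell^\infty}]$ is bounded as $\ell$ varies, so that $\set{\Gamma_{A/K,\ell}}$ is a collection of bounded subgroups of $G_A$ in the sense of Definition~\ref{D:bounded}. Since this is cited as a theorem of Cadoret--Moonen and of Hindry--Ratazzi, I would not try to reprove it from scratch; rather I would explain why the two parts of the displayed assertion follow from those references and how they fit together. First I would recall the setup: $G_A/\integ[1/N_A]$ is the smooth group scheme with $G_{A,\rat} = \mt(A)$ obtained as the Zariski closure of $\mt(A)$ in $\GL_{H_1(A_\cx,\integ)}$, and for $\ell \nmid N_A$ we have $G_{A,\ell} := G_A \times_{\integ[1/N_A]} \spec \integ_\ell$. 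The Mumford--Tate conjecture in the integral form of Conjecture~\ref{Conj: MT_conj} gives $G_{A,\ell} = \mathcal G_{A/K,\ell}^\circ$ for all $\ell \nmid N$, and since $K = K^{\rm conn}$ the group $\mathcal G_{A/K,\ell}$ is already connected (Lemma~\ref{Lem: K^conn}), so $\mathcal G_{A/K,\ell} = G_{A,\ell}$ and $\Gamma_{A/K,\ell^\infty}$ is a subgroup of $G_A(\integ_\ell)$.

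The heart of the matter is then the first assertion, which is exactly the main adelic theorem of Cadoret--Moonen (their Theorem~A), respectively the result of Hindry--Ratazzi for abelian varieties of type I and II: assuming the Mumford--Tate conjecture, the index $[G_A(\integ_\ell) : \Gamma_{A/K,\ell^\infty}]$ is bounded independently of $\ell$. I would state this as a citation and sketch the mechanism only at a high level: the $\ell$-adic Mumford--Tate conjecture forces $\Gamma_{A/K,\ell^\infty}$ to be open in $G_A(\integ_\ell)$ for each $\ell$, so the real content is the \emph{uniformity} of the index in $\ell$; this comes from combining Serre's theorem on the integral $\ell$-adic image for almost all $\ell$ (relating $\Gamma_{A/K,\ell}$ to $G_A(\ff_\ell)$ via Nori-type and Frobenius-torus arguments) with the independence of the $\ell$-adic representations (cf. Lemma~\ref{Lem: Serre_ind_repn}), and with the hyperspecial/reductivity structure of $G_{A,\ell}$ for $\ell$ not dividing $N_A$ and not in a bad finite set. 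For the finitely many remaining primes (those dividing $N_A$, or those of bad reduction), one simply absorbs them into the constant $B$, since each individual index $[G_A(\integ_\ell):\Gamma_{A/K,\ell^\infty}]$ is finite.

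For the ``in particular'' clause I would argue as follows. By definition $\set{\Gamma_{A/K,\ell}}$ is a collection of bounded subgroups of $G_A$ if there is a finite $B$ with $[G_A(\ff_\ell) : \Gamma_{A/K,\ell}] < B$ for all $\ell \nmid N_A$. The reduction-mod-$\ell$ map $G_A(\integ_\ell) \to G_A(\ff_\ell)$ is surjective (smoothness of $G_A$ over $\integ_\ell$, by Hensel's lemma / the smoothness criterion), and it carries $\Gamma_{A/K,\ell^\infty}$ onto $\Gamma_{A/K,\ell}$ by definition of the latter as the mod-$\ell$ reduction of the former. Hence $[G_A(\ff_\ell) : \Gamma_{A/K,\ell}] \le [G_A(\integ_\ell) : \Gamma_{A/K,\ell^\infty}] < B$, which is precisely the boundedness condition \eqref{E:defbounded}.

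The main obstacle is not in this deduction but in the first assertion itself, whose proof (in the cited papers) is substantial: establishing that the index of $\Gamma_{A/K,\ell^\infty}$ in $G_A(\integ_\ell)$ is bounded \emph{uniformly} in $\ell$ requires genuinely global input beyond the $\ell$-by-$\ell$ Mumford--Tate conjecture — in particular, the $\ell$-independence results of Serre, control of the component groups and of the abelianization, and careful analysis at small and ramified primes. Since that work is exactly what is being invoked, here I would be content to cite \cite{Cadoret-Moonen-int-adelic-MTC} and \cite{Hindry-Ratazzi-torsion-type-I-II} and record the straightforward passage from their statements to the ``bounded subgroups'' formulation used in the rest of the paper.
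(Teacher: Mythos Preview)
Your proposal is correct and matches the paper's treatment: the paper simply cites \cite[Theorem~A]{Cadoret-Moonen-int-adelic-MTC} and \cite[Th\'eor\`eme 10.1]{Hindry-Ratazzi-torsion-type-I-II} for the main assertion and gives no further argument, so your plan to invoke those references and then deduce the ``in particular'' clause via surjectivity of $G_A(\integ_\ell) \to G_A(\ff_\ell)$ (smoothness plus Hensel) is exactly right, and in fact spells out a step the paper leaves implicit.
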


\section{Torsion-finite pairs of abelian varieties}

\subsection{Mumford--Tate groups for a pair of abelian
  varieties}\label{SS:MT pair}

Let $A$ and $B$ be abelian varieties over a subfield $K$ of $\cx$.  Let
 $G_A$, $G_B$ and $G_{A\times B}$ denote the ($\integ$-models of) the Mumford--Tate groups
of, respectively, $A$, $B$ and $A\times B$, and let $sG_A$, $sG_B$ and
$sG_{A\times B}$ denote their respective Hodge groups.  Recall that the
Mumford--Tate group $G_C$ of a complex abelian variety $C$ is the $\rat$-algebraic hull of the morphism $\Res_{\cx/\real}\gp_m \to \aut(H_1(C,\rat)\otimes_\rat\real)$ defining the Hodge structure on $H_1(C,\rat)$.  Equivalently, it is the Tannakian fundamental group (really, the group which represents automorphisms of the fiber functor which sends a Hodge structure $V$ to its underlying vector space $|V|$) of $\ang{H_1(C,\rat)}$, the tensor
category generated by the Hodge structure $H_1(C,\rat)$. Since $H^1(C,\rat)$ is dual to $H_1(C,\rat)$, the tensor categories $\ang{H_1(C,\rat)}$ and $\ang{H^1(C,\rat)}$ coincide, and we may use either description to compute $\mt(C)$.  We have $G_C/sG_C \iso \gp_m$.

Since
$H_1(A\times B,\rat) \iso H_1(A,\rat)\oplus H_1(B,\rat)$ is an object
of $\ang{H_1(A,\rat),H_1(B,\rat)}$, there is a canonical inclusion 
$\iota\colon G_{A\times B} 
 \hookrightarrow G_A\times G_B$.  Moreover, $H_1(A,\rat)$ and $H_1(B,\rat)$ are both objects of $\ang{H_1(A\times B,\rat)}$.  The corresponding inclusions $\ang{H_1(A,\rat)}
\hookrightarrow \ang{H_1(A\times B,\rat)}$ and
$\ang{H_1(B,\rat)}\hookrightarrow \ang{H_1(A\times B,\rat)}$ yield
surjections $G_{A\times B} \twoheadrightarrow G_A$ and $G_{A\times
  B}\twoheadrightarrow G_B$.  Thus, the three algebraic groups $G_A$,
$G_B$ and $G_{A\times B}$ satisfy the hypotheses of Goursat's lemma (Lemma \ref{L:goursat}), and fit in a diagram as follows.
\begin{equation}
\label{D:MT goursat}
\xymatrix{
G_{A\times B} \ar@{^(->}[r]^\iota & G_A \times G_B \ar@{->>}[dl]^{\pi_A} \ar@{->>}[dr]_{\pi_B} \\
G_A && G_B
}
\end{equation}
Let $M_{A,B} = \ker(\pi_B\circ\iota)$;  under the isomorphism
$G_A\times \set e \iso G_A$, it is isomorphic to a normal algebraic
subgroup $H_{A,B}$ of $G_A$.  Define $M_{B,A}$ and $H_{B, A}$ in an
analogous fashion.  Because $H_1(A,\rat)$ and $H_1(B,\rat)$ have the
same nonzero weight, $H_{A,B} \subset sG_A$ and $H_{B,A} \subset
sG_B$.  Consequently, the Hodge groups also satisfy the hypotheses of
Goursat's lemma, i.e., fit in a diagram
\begin{equation*}
\xymatrix{
sG_{A\times B} \ar@{^(->}[r]^\iota & sG_A \times sG_B \ar@{->>}[dl]^{\pi_A} \ar@{->>}[dr]_{\pi_B} \\
sG_A && sG_B
}
\end{equation*}
In particular, $sG_{A\times B}$ is the inverse image in $sG_A\times sG_B$ of the graph of an isomorphism
\begin{equation}
  \label{E:special hodge goursat}
\xymatrix{  \frac{sG_A}{H_{A,B}} \ar[r]^\sim & 
  \frac{sG_B}{H_{B,A}}}.
\end{equation}

For future use, we record the following observation.

\begin{lemma}
\label{L:MT isomorphic}  Let $A$ and $B$ be complex abelian varieties.  The following are equivalent.
  \begin{enumerate}[label=(\alph*)]
  \item $A$ and $B$ are isogenous;
  \item $H_1(A,\rat)$ and $H_1(B,\rat)$ are isomorphic representations of $\smt(A \times B)$;
    \item The canonical surjections $\smt(A\times B) \twoheadrightarrow \smt(A)$ and $\smt(A\times B) \twoheadrightarrow \smt(B)$ are isomorphisms, and $H_1(A,\rat)$ and $H_1(B,\rat)$ are isomorphic representations of this common group.
    \end{enumerate}
  \end{lemma}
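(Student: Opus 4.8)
The plan is to establish the cycle $(a)\Rightarrow(b)\Rightarrow(c)\Rightarrow(a)$, routing $(c)\Rightarrow(a)$ through $(b)$: $(c)$ trivially implies $(b)$ --- one pulls the isomorphism of representations back along the two isomorphisms $\smt(A\times B)\iso\smt(A)$ and $\smt(A\times B)\iso\smt(B)$ --- so only $(a)\Rightarrow(b)$, $(b)\Rightarrow(a)$, and $(b)\Rightarrow(c)$ require argument.

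The equivalence $(a)\Leftrightarrow(b)$ is the Hodge-theoretic dictionary. Morphisms in the Tannakian category $\ang{H_1(A\times B,\rat)}$ are exactly the $\mt(A\times B)$-equivariant linear maps, and since this category is a full subcategory of $\rat$-Hodge structures they are exactly the morphisms of Hodge structures. For $(a)\Rightarrow(b)$: an isogeny $A\to B$ induces an isomorphism $H_1(A,\rat)\iso H_1(B,\rat)$ of Hodge structures, hence of $\mt(A\times B)$-representations (both sides being objects of $\ang{H_1(A\times B,\rat)}$ as recorded above), hence of $\smt(A\times B)$-representations. For $(b)\Rightarrow(a)$: given an isomorphism $\phi$ of $\smt(A\times B)$-representations, first note that because $H_1(A,\rat)$ and $H_1(B,\rat)$ share the same nonzero weight, the homotheties in $\GL_{H_1(A\times B,\rat)}$ act by a common scalar on the two summands of $H_1(A\times B,\rat)=H_1(A,\rat)\oplus H_1(B,\rat)$; as $\mt(A\times B)$ is generated by $\smt(A\times B)$ and these homotheties, the $\rat$-linear map $\phi$ is automatically $\mt(A\times B)$-equivariant, hence a morphism of Hodge structures. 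An invertible morphism of rational Hodge structures $H_1(A,\rat)\iso H_1(B,\rat)$ lies in $\Hom(A,B)\otimes\rat$ and comes from an isogeny: this is the classical full faithfulness of $C\mapsto H_1(C,\rat)$ on abelian varieties up to isogeny (concretely, after clearing denominators $\phi$ carries $H_1(A,\integ)$ into $H_1(B,\integ)$ and defines a holomorphic, hence algebraic, morphism of the underlying tori).

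The implication $(b)\Rightarrow(c)$ is where the Goursat diagram \eqref{D:MT goursat} does the work. Since $\smt(A)\hookrightarrow\GL_{H_1(A,\rat)}$ is faithful, the Goursat kernel $M_{B,A}=\ker(\pi_A\circ\iota)\subset\smt(A\times B)$ is precisely the kernel of the action of $\smt(A\times B)$ on $H_1(A,\rat)$; likewise $M_{A,B}=\ker(\pi_B\circ\iota)$ is the kernel of the action on $H_1(B,\rat)$. An isomorphism of $\smt(A\times B)$-representations $H_1(A,\rat)\iso H_1(B,\rat)$ forces these two kernels to coincide, say $M_{A,B}=M_{B,A}=M$. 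But $\smt(A\times B)$ acts faithfully on $H_1(A\times B,\rat)=H_1(A,\rat)\oplus H_1(B,\rat)$, and the kernel of that action is $M_{A,B}\cap M_{B,A}=M$; hence $M=\set e$. Consequently $\pi_A\circ\iota$ and $\pi_B\circ\iota$ have trivial kernel, so both surjections $\smt(A\times B)\twoheadrightarrow\smt(A)$ and $\smt(A\times B)\twoheadrightarrow\smt(B)$ are isomorphisms; carrying $\phi$ through these isomorphisms exhibits $H_1(A,\rat)$ and $H_1(B,\rat)$ as isomorphic representations of the common group, which is $(c)$.

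The two dictionary facts are routine and standard. I expect the load-bearing point to be the faithfulness argument in $(b)\Rightarrow(c)$ --- brief, but it is exactly what converts an isomorphism of representations into the simultaneous vanishing of both Goursat kernels --- together with the easy-to-overlook promotion of $\smt$-equivariance to $\mt$-equivariance via the shared weight. The main bookkeeping hazard will be keeping $M_{A,B}$ and $M_{B,A}$ straight and matching each to the kernel of the correct representation.
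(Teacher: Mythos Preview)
Your proof is correct and, for the equivalence $(a)\Leftrightarrow(b)$, essentially identical to the paper's: both invoke the Tannakian dictionary to identify Hodge-structure morphisms with $\mt(A\times B)$-equivariant maps, then use the shared weight to pass between $\mt$- and $\smt$-equivariance, and finally appeal to Riemann's theorem.

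The one genuine difference is how $(c)$ is reached. The paper proves $(a)\Rightarrow(c)$ by citing an external reference (Moonen) for the well-known fact that isogenous abelian varieties have isomorphic Mumford--Tate groups, and then argues $(c)\Rightarrow(b)$. You instead prove $(b)\Rightarrow(c)$ directly via the faithfulness observation: isomorphic $\smt(A\times B)$-representations have the same kernel, so $M_{A,B}=M_{B,A}$, and faithfulness of the action on $H_1(A\times B,\rat)$ forces this common kernel to be trivial. Your route is self-contained and avoids the external citation; it also makes transparent use of the Goursat setup \eqref{D:MT goursat} that the paper has just introduced, which is a nice economy. The paper's route has the (slight) advantage of recording the standard fact about isogenous varieties explicitly. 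Both are short and both are fine.
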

  
\begin{proof}
      The category $\ang{H_1(A\times B,\rat)}$ is equivalent to the
      category of representations of $\mt(A\times B)$.   So
      $H_1(A,\rat)$ and $H_1(B,\rat)$ are isomorphic in the category
      of Hodge structures, or equivalently in the full subcategory
      generated by $H_1(A\times B,\rat)$, if and only if they are
      isomorphic representations of $\mt(A\times B)$.  For weight
      reasons, it suffices to verify this for the Hodge group
      $\smt(A\times B)$.  Riemann's theorem -- that the isogeny class
      of an abelian variety is determined by its Hodge structure --  proves the equivalence of (a) and (b).

 If $A$ and $B$ are isogenous, it's well-known that $\mt(A\times B) \iso \mt(A) \iso \mt(B)$ (e.g., \cite[Rem.~1.8]{Moonen-MT}).  Conversely, under the hypothesis of (c), weight considerations show that the corresponding hypothesis holds for Mumford--Tate gorup, too.  Now use the fact that $\mt(A)$ is canonically isomorphic to the image of $\mt(A\times B)$ is $\GL_{H_1(A,\rat)}$ and the analogous statment for $B$ in order to deduce (b).
 \end{proof}

Now suppose that $A$ and $B$ have complex multiplication (see \S \ref{Sect: CM} for a review of this concept). Then $A\times B$ does, too, and the Mumford--Tate groups $G_A$, $G_B$ and $G_{A\times B}$ are all tori.  Taking character groups in \eqref{D:MT goursat} yields a diagram of $\integ$-modules
\begin{equation}
\label{D:MT CM goursat}
\xymatrix{
X^*(G_{A\times B})  & X^*(G_A) \times X^*(G_B) \ar@{->>}[l]  \\
X^*(G_A) \ar@{^(->}[ur] && X^*(G_B) \ar@{_(->}[ul]
}.
\end{equation}
In particular, we may use this diagram to compute $H_{A, B}$, a group of multiplicative type; it is the group whose character group is 
\begin{align}
\label{E:HAB for tori}
X^*(H_{A,B}) &= \frac{X^*(G_{A\times B})}{X^*(G_B)}.
\intertext{If we identify $X^*(G_A)$ and $X^*(G_B)$ with their images under, respectively, the inclusions $(\pi_A\circ\iota)^*$ and $(\pi_B\circ\iota)^*$, we may rewrite this as}
\label{E:HAB for tori reprise}
 X^*(H_{A,B}) &= \frac{X^*(G_A)+X^*(G_B)}{X^*(G_B)} \iso \frac{X^*(G_A)}{X^*(G_A)\cap X^*(G_B)}.
  \end{align}

\subsection{Galois representations for a pair of abelian varieties}

Now further suppose that $K$ is finitely generated over $\rat$, and assume that $A$ and $B$ satisfy the Mumford--Tate conjecture.

Since our main results concern potentially
infinite torsion, we will assume that $A\times B$ has connected,
independent Galois representations.

For a positive integer $N$, we identify the Galois group $\Gamma_{A/K,N}$ with a subgroup
$\mathtt G_{A,N}$ of $G_A(\integ/N)$, and make similar
identifications of the image of $\gal(K)$ acting on the
$N$-torsion of $B$ and of $A\times B$.  The $N$-torsion fields
of $A$ and $B$ are then arranged in the following tower, where each
extension is labeled with its corresponding Galois group.
  \[
    \xymatrix@+=4pc{
      &K_{A,N}K_{B,N} \ar@{-}[dl]^{\mathtt M_{B,A,N}}
      \ar@{-}[dr]_{\mathtt M_{A,B,N}} \ar@{-}@/^25ex/[ddd]^{\mathtt
        G_{A\times B,N}}&\\
K_{A,N} \ar@{-}[dr]^{\mathtt H_{A,B,N}} \ar@{-}[ddr]_{\mathtt
  G_{A,N}} && K_{B,N} \ar@{-}[dl]_{\mathtt H_{B,A,N}}
\ar@{-}[ddl]^{\mathtt G_{B,N}} \\
& K_{A,N}\cap K_{B,N} \ar@{-}[d]_{\frac{\mathtt
    G_{A,N}}{\mathtt H_{A,B,N}}}^{\frac{\mathtt
    G_{B,N}}{\mathtt H_{B,A,N}}} \\
&K
}
\]
Let $\mathtt H_{A,B,\ell^\infty} = \varprojlim_{n}\mathtt H_{A,B,\ell^n}
\subset H_{A,B}(\integ_\ell)$.

We have $\gal(K_{B,\ell} K_{A,\ell}/K_{B,\ell}) =
\mathtt M_{A,B,\ell} \iso \mathtt H_{A,B,\ell} \subseteq
H_{A,B}(\ff_\ell)$, and $A[\ell](K_{B,\ell})$ is the set of elements
of $A_\ell$ fixed by $\mathtt H_{A,B,\ell}$.

\begin{lemma}
\label{L:ell is enough}
    Let $A$ and $B$ be abelian varieties over a number field $K$.  Suppose that $A\times B$ has independent Galois representations.  Then for each prime $\ell$,
$A[\ell](K_B)$ is nontrivial if and only if $A[\ell](K_{B,\ell})$ is nontrivial.
\end{lemma}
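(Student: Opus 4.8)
The plan is to bridge Lemma~\ref{Lem: ind_tor_flds} and Lemma~\ref{L:fixed by index ell subgroup} via the elementary observation that $K_{B,\ell^\infty}/K_{B,\ell}$ is a pro-$\ell$ extension. One implication is immediate: $K_{B,\ell}\subseteq K_B$, so if $A[\ell](K_{B,\ell})$ is nontrivial then so is $A[\ell](K_B)$. For the converse, assume $A[\ell](K_B)\neq 0$. Since $A\times B$ has independent torsion fields, Lemma~\ref{Lem: ind_tor_flds} gives $A[\ell^\infty](K_B)=A[\ell^\infty](K_{B,\ell^\infty})$; intersecting both sides with $A[\ell]$ yields $A[\ell](K_B)=A[\ell](K_{B,\ell^\infty})$, so in particular $A[\ell](K_{B,\ell^\infty})\neq 0$. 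It therefore remains to descend the $\ell$-torsion of $A$ from $K_{B,\ell^\infty}$ down to $K_{B,\ell}$.

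The key input is that $\gal(K_{B,\ell^\infty}/K_{B,\ell})$ is pro-$\ell$: it is the kernel of the surjection $\Gamma_{B/K,\ell^\infty}\twoheadrightarrow\Gamma_{B/K,\ell}$ induced by reduction modulo $\ell$, hence a closed subgroup of the kernel of $\GL(T_\ell B)\to\GL(B_\ell)$, and this congruence kernel is pro-$\ell$. Now set $V=A_\ell$, viewed as an $\ff_\ell$-vector space; the action of $\Delta:=\gal(\bar K/K_{B,\ell})$ on $V$ factors through the finite quotient $G:=\gal(K_{A,\ell}K_{B,\ell}/K_{B,\ell})$, and $V^{G}=V^{\Delta}=A[\ell](K_{B,\ell})$. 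Since $K_{B,\ell^\infty}/K_{B,\ell}$ is Galois, the subgroup $\Delta':=\gal(\bar K/K_{B,\ell^\infty})$ is normal in $\Delta$, so its image $\bar H$ in $G$ is a normal subgroup with $V^{\bar H}=V^{\Delta'}=A[\ell](K_{B,\ell^\infty})\neq 0$. Moreover $G/\bar H$ is a quotient of $\Delta/\Delta'=\gal(K_{B,\ell^\infty}/K_{B,\ell})$, hence a finite $\ell$-group, so $[G:\bar H]$ is a power of $\ell$.

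With these data assembled, Lemma~\ref{L:fixed by index ell subgroup}, applied to the representation $G\to\GL_{V}(\ff_\ell)$ and the normal subgroup $\bar H$, gives $V^{G}\supsetneq(0)$, i.e. $A[\ell](K_{B,\ell})\neq 0$, which finishes the proof. I expect the only delicate point to be the bookkeeping in the second paragraph --- identifying $\bar H$ as a normal subgroup of $G$ and checking that $G/\bar H$ is a quotient of the pro-$\ell$ group $\gal(K_{B,\ell^\infty}/K_{B,\ell})$; both are routine chases through the tower $K_{B,\ell}\subseteq K_{B,\ell^\infty}\subseteq K_{A,\ell}K_{B,\ell^\infty}$, and once they are in place the two cited lemmas do the rest.
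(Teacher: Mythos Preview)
Your proof is correct and follows essentially the same route as the paper: invoke Lemma~\ref{Lem: ind_tor_flds} to reduce from $K_B$ to $K_{B,\ell^\infty}$, observe that $\gal(K_{B,\ell^\infty}/K_{B,\ell})$ is pro-$\ell$ (as a subgroup of the congruence kernel of $\GL(T_\ell B)$), and then apply Lemma~\ref{L:fixed by index ell subgroup}. The only cosmetic difference is that the paper passes to a finite level $K_{B,\ell^n}$ before invoking Lemma~\ref{L:fixed by index ell subgroup}, whereas you work with the full tower and factor through the finite quotient $G$; both are fine.
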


\begin{proof}
By independence, $A[\ell](K_B) = A[\ell](K_{B,\ell^\infty})$ (Lemma \ref{Lem: ind_tor_flds}).  Suppose that, for some $n > 1$, $\#A[\ell](K_{B,\ell^n}) > 1$; equivalently, $\gal(K_{B,\ell^n})$ has a nontrivial fixed point in $A_\ell$.  By Lemma \ref{L:fixed by index ell subgroup}, in order to show that $\gal(K_{B,\ell})$ has a nontrivial fixed point in $A_\ell$, it suffices to show that $[\gal(K_{B,\ell^n}):\gal(K_{B,\ell})]$ is a power of $\ell$.  This last claim follows from the inclusions
\[\xymatrix{
\frac{\gal(K_{B,\ell})}{\gal(K_{B,\ell^n})} \iso \gal(K_{B,\ell^n}/K_{B,\ell}) \ar@{^(->}[r] & \set{ g \in \aut(B_{\ell^n}): g \equiv \operatorname{id} \bmod \ell}\subset 1 + \ell \operatorname{End}(B_{\ell^n})}.
\]
\end{proof}

\begin{lemma}
\label{L:bounded image for KB}
Let $A$ and $B$ be abelian varieties over a number field $K$.  Suppose
that $A$ and $B$ satisfy the Mumford--Tate conjecture, and that
$A\times B$ has connected Galois representations.  Then
\begin{enumerate}[label=(\alph*)]
  \item $\set{
\mathtt H_{A,B,\ell}\cap H_{A,B}^\circ(\ff_\ell)}$ is a collection of
bounded subgroups of $H_{A,B}^\circ$; and
\item  $\mathtt H_{A,B,\ell^\infty}\cap H_{A,B}^\circ(\integ_\ell)$ is
    Zariski dense in $H_{A,B,\rat_\ell}^\circ$.
  \end{enumerate}
\end{lemma}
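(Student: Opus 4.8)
The plan is to deduce both parts from the corresponding statements for $A\times B$ itself, using the Goursat description of $H_{A,B}$ recalled in Section~\ref{SS:MT pair}: there $H_{A,B}$ is identified, via $\pi_A\circ\iota$, with $M_{A,B}=\ker(\pi_B\circ\iota)$, which fits in the short exact sequence of group schemes over $\integ[1/N]$
\[
\xymatrix{0 \ar[r] & M_{A,B} \ar[r] & G_{A\times B}\ar[r]^{\pi_B\circ\iota} & G_B \ar[r] & 0}
\]
(after enlarging $N$ so that $M_{A,B}$ and $M_{A,B}^\circ$ are smooth over $\integ[1/N]$). This isomorphism identifies $M_{A,B}^\circ$ with $H_{A,B}^\circ$, and, since restricting the Galois action to the $B$-torsion corresponds to $\pi_B\circ\iota$, it identifies $\mathtt M_{A,B,\ell}=\mathtt G_{A\times B,\ell}\cap M_{A,B}(\ff_\ell)$ with $\mathtt H_{A,B,\ell}$ and, compatibly at all finite levels hence on inverse limits, $\mathtt M_{A,B,\ell^\infty}=\mathtt G_{A\times B,\ell^\infty}\cap M_{A,B}(\integ_\ell)$ with $\mathtt H_{A,B,\ell^\infty}$. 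So it suffices to prove both assertions with $M_{A,B}$ in place of $H_{A,B}$.

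Since $A$ and $B$ satisfy the Mumford--Tate conjecture, so does $A\times B$ \cite{commelinproductmt}, and the hypothesis that $A\times B$ has connected Galois representations is precisely that $K=K^{\operatorname{conn}}$ for $A\times B$. Theorem~\ref{Thm: Cadoret_Moonen} therefore applies to $A\times B$: the collection $\{\mathtt G_{A\times B,\ell}\}$ has bounded index in $G_{A\times B}$, and $\mathtt G_{A\times B,\ell^\infty}$ is a closed (hence, having finite index, open) subgroup of $G_{A\times B}(\integ_\ell)$, the index being bounded independently of $\ell$. Part (a) now falls out of Lemma~\ref{L:abstractboundedindex}(b), applied to the exact sequence above with $\mathtt H_\ell=\mathtt G_{A\times B,\ell}$: it states exactly that $\{(\ker(\pi_B\circ\iota)|_{\mathtt G_{A\times B,\ell}})\cap M_{A,B}^\circ(\ff_\ell)\}=\{\mathtt M_{A,B,\ell}\cap M_{A,B}^\circ(\ff_\ell)\}$ has bounded index in $M_{A,B}^\circ$.

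For (b), intersecting the open, finite-index subgroup $\mathtt G_{A\times B,\ell^\infty}$ of $G_{A\times B}(\integ_\ell)$ with $M_{A,B}(\integ_\ell)$ shows that $\mathtt M_{A,B,\ell^\infty}$ is open and of finite index in $M_{A,B}(\integ_\ell)$, hence $\mathtt M_{A,B,\ell^\infty}\cap M_{A,B}^\circ(\integ_\ell)$ is open and of finite index in $M_{A,B}^\circ(\integ_\ell)$. For $\ell\nmid N$ the smooth connected $\integ_\ell$-group $M_{A,B}^\circ$ has $M_{A,B}^\circ(\integ_\ell)$ Zariski dense in $M_{A,B,\rat_\ell}^\circ$ (a compact open subgroup of the $\rat_\ell$-points of a connected linear algebraic group in characteristic zero is Zariski dense, since it determines the Lie algebra), and a finite-index subgroup of a Zariski-dense subgroup of a connected group is again Zariski dense — its closure is a closed subgroup of finite index in a connected group, hence the whole group. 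Thus $\mathtt M_{A,B,\ell^\infty}\cap M_{A,B}^\circ(\integ_\ell)$ is Zariski dense in $M_{A,B,\rat_\ell}^\circ$, which is (b) after transport along $\pi_A\circ\iota$.

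I expect the only genuinely delicate point to be the bookkeeping of the first paragraph: matching the abstract Goursat groups $M_{A,B}$ and $H_{A,B}$ with the Galois groups appearing in the $N$-torsion tower of Section~\ref{SS:MT pair}, and keeping track of the finitely many primes one must invert so that $M_{A,B}$ and $M_{A,B}^\circ$ admit good integral models. Once the exact sequence and Theorem~\ref{Thm: Cadoret_Moonen} are in place, the group-theoretic content is routine.
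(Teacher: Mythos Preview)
Your proof is correct and follows essentially the same route as the paper: part (a) is identical (Cadoret--Moonen for $A\times B$ plus Lemma~\ref{L:abstractboundedindex}(b) applied to $0\to M_{A,B}\to G_{A\times B}\to G_B\to 0$), and for (b) the paper phrases openness of $\mathtt G_{A\times B,\ell^\infty}$ as containment of a principal congruence subgroup and then deduces Zariski density by counting $\integ/\ell^{m+n}$-points, whereas you argue via finite index and the Lie-algebra observation---these are interchangeable. One small point: the restriction to $\ell\nmid N$ in your density step is unnecessary (the lemma is asserted for every $\ell$), but your own parenthetical reasoning---an open subgroup of the $\rat_\ell$-points of a connected group has full Lie algebra---already works uniformly in $\ell$, so nothing is actually missing.
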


\begin{proof}
Since $A\times B$ satisfies the Mumford--Tate conjecture,
$\set{\mathtt G_{A\times B,\ell}}$ is bounded in $G_{A\times B}$ (Theorem \ref{Thm: Cadoret_Moonen}).  Now apply Lemma 
\ref{L:abstractboundedindex}(b) to the exact sequence
\[
  \xymatrix{
    0 \ar[r] & M_{A,B} \ar[r] & G_{A\times B} \ar[r] & G_B \ar[r] & 0
  }\]
to deduce (a).

For a fixed $\ell$, there exists an $n = n_\ell$ such that $\mathtt G_{A,\ell^\infty}$ contains $\ker(G_A(\integ_\ell)\to G_A(\integ_\ell/\ell^n))$, and so $\mathtt H_{A,B,\ell^\infty}\cap H_{A,B}^\circ(\integ_\ell)$ contains $\ker(H_{A,B}^\circ(\integ_\ell) \to H_{A,B}^\circ(\integ_\ell/\ell^n))$.  Then counting points (with values in $\integ_\ell/\ell^{m+n}$ for $m\gg 0$) shows that the Zariski closure of $\mathtt H_{A,B,\ell^\infty}\cap H_{A,B}^\circ(\integ_\ell)$, a closed subgroup of the irreducible variety $H_{A,B,\rat_\ell}^\circ$, must be all of $H^\circ_{A,B,\rat_\ell}$.
\end{proof}

\subsection{Preliminaries on torsion finiteness}

If two abelian varieties are isomorphic, or more generally isogenous, then it is easy to see that each is torsion infinite for the other:

\begin{lemma}\label{L:isogenous}
  Let $A$ and $B$ be abelian varieties over a number field $K$.
  \begin{enumerate}[label=(\alph*)]
    \item If $A$ and $A'$ are isogenous over $K$, and if $B$ and $B'$ are
      isogenous over $K$, then $A$ is torsion finite for $B$ over $K$ if and only if
      $A'$ is torsion finite for $B'$ over $K$.
    \item If $L/K$ is a finite extension, and if $A_L$ is torsion
        finite for $B_L$ over $L$, then $A$ is torsion finite for $B$ over $K$.
    \item If $m$ and $n$ are two positive integers, then $A$ is torsion finite for $B$ over $K$ if and only if $A^{m}$ is torsion finite for $B^{n}$ over $K$.
      \end{enumerate}
    \end{lemma}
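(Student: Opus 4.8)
The plan is to prove Lemma~\ref{L:isogenous} one part at a time, with (a) as the main technical ingredient and (b), (c) following quickly from the same ideas.

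\textbf{Part (a).} The key observation is that if $\phi\colon A \to A'$ is a $K$-isogeny, then $\ker\phi$ is a finite $K$-group scheme, and for every field extension $L/K$ we have $A'(L)_{\tors} \subseteq \phi(A(\bar K)_{\tors})$, while $\phi$ identifies $A(L)_{\tors}$ with a subgroup of $\phi^{-1}(A'(L)_{\tors})$, which differs from $A(L)_{\tors}$ itself only by the points of $\ker\phi$. More usefully for our purposes, I would argue at the level of torsion fields: since $\phi$ is defined over $K$, it maps $A[N](\bar K)$ into $A'[N'](\bar K)$ for a suitable $N'$ (one can take $N' = N \cdot |\ker\phi|$ or just note $\phi(A[NM]) \supseteq A'[N]$ where $M$ kills $\ker\phi$), so $K_{A'} \subseteq K_A$; by symmetry (using an isogeny $A' \to A$ in the other direction, which exists since isogeny is an equivalence relation) we get $K_{A'} = K_A$. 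Applying this to both $A \sim A'$ and $B \sim B'$ gives $K_A = K_{A'}$ and $K_B = K_{B'}$. Then $A(K_B)_{\tors}$ is finite iff $A'(K_B)_{\tors}$ is finite: one direction is the Mordell--Weil-type remark that $\phi$ restricts to a map $A(K_B)_{\tors} \to A'(K_B)_{\tors}$ with kernel contained in the finite group $\ker\phi(\bar K)$, and image of finite index would force finiteness; more cleanly, $A(L)_{\tors}$ and $A'(L)_{\tors}$ are commensurable for any $L$ because $\phi$ and a complementary isogeny $\psi$ with $\psi\phi = [\deg\phi]$ exhibit each as containing a finite-index image of the other intersected with torsion. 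Combining $K_B = K_{B'}$ with this commensurability finishes (a).

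\textbf{Parts (b) and (c).} For (b), observe that $L_B \supseteq K_B$, so $A(K_B)_{\tors} \subseteq A(L_B)_{\tors} = A_L(L_B)_{\tors}$; if the latter is finite so is the former. Actually the statement as phrased is the contrapositive: if $A_L$ is torsion \emph{finite} for $B_L$ over $L$ then $A$ is torsion finite for $B$ over $K$ — immediate from $A(K_B)_{\tors} \subseteq A(L_B)_{\tors}$. For (c), note that $K_{A^m} = K_A$ since the torsion of $A^m$ is just the $m$-fold product of the torsion of $A$, so adjoining its coordinates adjoins exactly the coordinates of torsion of $A$; likewise $K_{B^n} = K_B$. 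Hence $A^m(K_{B^n})_{\tors} = (A(K_B)_{\tors})^m$, which is finite iff $A(K_B)_{\tors}$ is finite.

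\textbf{Expected obstacle.} None of this is genuinely hard; the only point requiring a little care is making the commensurability argument in (a) precise — specifically checking that the field generated by torsion is an isogeny invariant, i.e.\ that an isogeny $\phi\colon A \to A'$ defined over $K$ satisfies $K_{A'} \subseteq K_A$ (so one should track how $\phi$ moves torsion points and invoke that isogeny is symmetric), and that $\ker\phi(\bar K)$ being finite is enough to transfer finiteness of torsion back and forth. Everything else is formal manipulation of torsion fields.
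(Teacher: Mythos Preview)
Your proposal is correct and follows essentially the same route as the paper: show that $K_B=K_{B'}$ (and $K_A=K_{A'}$) via the dual isogeny, use the complementary isogeny $\psi$ with $\psi\phi=[N]$ to deduce that $A(F)_{\tors}$ and $A'(F)_{\tors}$ are simultaneously finite or infinite for any $F$, and handle (b) and (c) by the trivial observations $K_B\subseteq L_B$ and $K_{C^r,N}=K_{C,N}$. The paper states the commensurability as the explicit inequality $\frac1N\,\#B'(F)_{\tors}\le \#B(F)_{\tors}\le N\,\#B'(F)_{\tors}$, but this is exactly your argument.
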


    \begin{proof}

Let $g\colon B \to B'$ be an isogeny of exponent $N$; there is an isogeny $g'\colon B' \to B$ such that $g'\circ g = [N]_B$. Then for any (not necessarily finite) field extension $F/K$ one has
    \[
    \frac 1 N \cdot \#B'(F)_{\rm tors}\leq \#B(F)_{\rm tors}\leq N \cdot \#B'(F)_{\rm tors}.
    \]
    In particular, $B(F)_{\rm tors}$ and $B'(F)_{\rm tors}$ are either both finite or infinite. Moreover, one can also deduce that $K_B=K_{B'}$. We deduce (a) after applying the same argument to $A$ and $A'$.
    
    Part (b) is obvious since $L_B$ contains $K_B$. 
    
    Part (c) follows from the observation that $K_{C,N}=K_{C^{r}, N}$ for any abelian variety $C/K$ and any natural numbers $r$ and $N$.
  \end{proof}

  \begin{lemma}
    \label{L:tf for products}
    Suppose $A$ and $B$ are abelian varieties over a field $K$.  There exist a finite extension $L$ and an isogeny of $L$-abelian varieties $\oplus_{i=1}^r A_i^{m_i} \to A_L$ with each $A_i$ absolutely simple; and $A$ is essentially torsion finite for $B$ if and only if each $A_i$ is essentially torsion finite for $B_L$.
  \end{lemma}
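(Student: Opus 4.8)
The plan is to reduce the statement about $A$ to statements about its absolutely simple isogeny factors, using the isogeny invariance of torsion finiteness (Lemma~\ref{L:isogenous}) together with a multiplicativity property over products of factors. First I would invoke Poincaré reducibility over a suitable finite extension: there is a finite extension $L/K$ over which $A_L$ becomes isogenous to a product $\bigoplus_{i=1}^r A_i^{m_i}$ with each $A_i$ an absolutely simple $L$-abelian variety. Enlarging $L$ if necessary, I may also assume $L \supseteq K^{\operatorname{conn}}$ and $K^{\operatorname{ind}}$ for $A\times B$ and for all the $A_i$, so that all the independence/connectedness hypotheses of the earlier lemmas are available — this costs nothing since we are only after \emph{essential} torsion finiteness, which by Definition~\ref{Defn: ess_tor_fini} and Lemma~\ref{L:isogenous}(b) is insensitive to replacing $K$ by a finite extension.

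The core point is then the following equivalence: for abelian varieties $C_1,\dots,C_r$ over $L$, the product $C := \bigoplus_i C_i$ is essentially torsion finite for $B_L$ if and only if each $C_i$ is. For this I would observe that $L_C = L_{C_1}\cdots L_{C_r}$ is the compositum of the individual torsion fields, so $(L')_B \supseteq (L')_{C_i,N}$ for every $i$, $N$ and every finite $L'/L$; hence torsion infiniteness of some $C_i$ for $B_L$ (after a finite base change) immediately gives torsion infiniteness of $C$. Conversely, if $C$ is potentially torsion infinite for $B$, then after a finite extension $L'$ the group $C((L')_B)_{\tors}$ is infinite; since $C((L')_B)_{\tors} = \prod_i C_i((L')_B)_{\tors}^{\,m_i}$ (a finite product), infiniteness of the product forces infiniteness of $C_i((L')_B)_{\tors}$ for at least one $i$, so that $C_i$ is potentially torsion infinite for $B$. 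Combining the two directions, together with Lemma~\ref{L:isogenous}(a),(c) to pass between $A_L$, $\bigoplus_i A_i^{m_i}$, $\bigoplus_i A_i$, and the individual $A_i$, yields the claim.

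Two small technical points deserve care. The first is that "essentially torsion finite" involves a quantifier over \emph{all} finite extensions, so when I pass from $K$ to $L$ I must check the two notions match up: $A$ is essentially torsion finite for $B$ over $K$ iff $A_L$ is essentially torsion finite for $B_L$ over $L$, which is immediate from the definition because every finite extension of $K$ has a common finite overextension with $L$. The second is the identity $C(F)_{\tors} = \prod_i C_i(F)_{\tors}^{m_i}$ for any field $F$: this holds on the nose for $C = \bigoplus_i C_i^{m_i}$, and is what converts "infinite torsion in a product" into "infinite torsion in a factor." Neither point is deep.

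I do not expect a serious obstacle here; the statement is essentially a bookkeeping consequence of Poincaré reducibility and Lemma~\ref{L:isogenous}. If anything, the one thing to be slightly careful about is the order of quantifiers in "essentially torsion finite" versus "potentially torsion infinite" — one must phrase the product decomposition argument on the side of \emph{potential torsion infiniteness} (an existential statement over finite extensions), where the finite-product identity for torsion groups applies cleanly, and then negate. Once that is set up correctly, the proof is a few lines.
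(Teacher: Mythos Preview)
Your approach is essentially the paper's: invoke Poincar\'e complete reducibility over a finite extension, then reduce to the individual factors using Lemma~\ref{L:isogenous}(a),(c) together with the identity $C(F)_{\tors} = \prod_i C_i(F)_{\tors}^{m_i}$ for a direct product. One small slip: the claim ``$(L')_B \supseteq (L')_{C_i,N}$'' does not follow from $L_C = L_{C_1}\cdots L_{C_r}$ and is generally false, but you do not need it---the forward implication (some $C_i$ potentially torsion infinite $\Rightarrow$ $C$ potentially torsion infinite) is immediate from the product identity you already state, and the extra enlargement of $L$ to contain $K^{\operatorname{conn}},K^{\operatorname{ind}}$ is unnecessary here.
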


  \begin{proof}
    The existence of such an $L$ and a factorization of $A_L$ is standard; since we are only concerned with essential torsion finiteness, we may and do assume $L=K$.  Then $A$ is essentially torsion finite for $B$ if and only if $\oplus_i A_i^{m_i}$ is (Lemma \ref{L:isogenous}(a)), which obviously holds if and only if each summand $A_i^{m_i}$ is essentially torsion finite for $B$.  By Lemma \ref{L:isogenous}(c), this holds if and only if each $A_i$ is essentially torsion finite for $B$.
  \end{proof}

\subsection{Potentially torsion infinite pairs}
\label{Sect: potentially-infinite}

If $H_{A,B}$ is connected and if $A$ acquires infinite torsion over
$K_B$, then $A$ acquires $\ell$-power torsion for all $\ell$:
\begin{lemma}
  Let $A$ and $B$ be abelian varieties over a number field $K$.
  Suppose that $A$ and $B$ satisfy the Mumford--Tate conjecture, and that
  $A\times B$ has connected independent Galois representations.

  Suppose that $H_{A,B}$ is
  connected.  Then the following are equivalent.

  \begin{enumerate}[label=(\alph*)]
  \item $A(K_B)_{\tors}$ is infinite.
  \item For all $\ell$, $A[\ell](K_B)$ is nontrivial.
  \item For all $\ell$, $A[\ell^\infty](K_B)$ is infinite.
  \end{enumerate}
\end{lemma}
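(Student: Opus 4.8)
The plan is to prove the cyclic chain of implications (a) $\Rightarrow$ (c) $\Rightarrow$ (b) $\Rightarrow$ (a). The implication (c) $\Rightarrow$ (b) is trivial, since if $A[\ell^\infty](K_B)$ is infinite it is certainly nontrivial, hence contains a point of order $\ell$. For (b) $\Rightarrow$ (a), suppose $A[\ell](K_B)$ is nontrivial for every $\ell$; then $A(K_B)_{\tors}$ contains a nonzero $\ell$-torsion point for every prime $\ell$, and since distinct primes contribute independent torsion, $A(K_B)_{\tors}$ is infinite. So the entire content is in (a) $\Rightarrow$ (c), or really in the weaker-looking (a) $\Rightarrow$ (b).

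For (a) $\Rightarrow$ (b): assume $A(K_B)_{\tors}$ is infinite. Then either $A[\ell](K_B)$ is nontrivial for infinitely many primes $\ell$, or else $A[\ell^\infty](K_B)$ is infinite for some single $\ell$; in either case we want to conclude $A[\ell](K_B) \neq 0$ for \emph{all} $\ell$. First I would reduce to a statement about fixed spaces of the group $H_{A,B}$. By Lemma~\ref{L:ell is enough} (using that $A\times B$ has independent Galois representations), $A[\ell](K_B)$ is nontrivial iff $A[\ell](K_{B,\ell})$ is, and by the discussion following the tower diagram this happens iff the abstract group $\mathtt H_{A,B,\ell}\subseteq H_{A,B}(\ff_\ell)$ has a nonzero fixed vector in $A_\ell$ — that is, iff $r_\ell(\mathtt H_{A,B,\ell}, \rho) \geq 1$, where $\rho$ is the tautological representation of $H_{A,B}$ on $H_1(A,\rat)$. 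By Lemma~\ref{L:bounded image for KB}(a), $\{\mathtt H_{A,B,\ell}\cap H_{A,B}^\circ(\ff_\ell)\}$ is a collection of bounded subgroups of $H_{A,B}^\circ$, and since $H_{A,B}$ is assumed connected, $\{\mathtt H_{A,B,\ell}\}$ is itself a bounded collection in $H_{A,B} = H_{A,B}^\circ$.

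Now I would feed this into the machinery of Section~\ref{Sect: repn_alg_gp}. In the first case (infinitely many $\ell$ with $A[\ell](K_B)\neq 0$), we get an infinite set $\ELL$ of primes with $r_\ell(\mathtt H_{A,B,\ell},\rho)\geq 1$; Lemma~\ref{L:rlGl}(b) (or Lemma~\ref{L:infmodell}(a) together with the connectedness of $H_{A,B}$) then gives $r_\ell(H_{A,B},\rho)\geq 1$ for all but finitely many $\ell$, and $r_\rat(H_{A,B},\rho)\geq 1$; since $H_{A,B}$ is connected, specialization (\S\ref{S:fixed spaces}) propagates this to \emph{every} $\ell$, so $V_\ell^{H_{A,B}(\ff_\ell)} \supseteq V_\ell^{H_{A,B}} \neq 0$, hence $\mathtt H_{A,B,\ell}$ fixes a nonzero vector for all $\ell$ and (b) holds. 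In the second case ($A[\ell_0^\infty](K_B)$ infinite for one $\ell_0$), the group $\mathtt H_{A,B,\ell_0^\infty}\cap H_{A,B}^\circ(\integ_{\ell_0})$, which is Zariski dense in $H_{A,B,\rat_{\ell_0}}$ by Lemma~\ref{L:bounded image for KB}(b), has an infinite — hence, being a $\integ_{\ell_0}$-submodule of $T_{\ell_0}A$ stable under it, nonzero — fixed space, so $r(\mathtt H_{A,B,\ell_0^\infty},\rho)\geq 1$; Lemma~\ref{L:rZlGZl}(a) gives $r_\rat(H_{A,B},\rho)\geq 1$, and again connectedness plus specialization gives (b). Finally, to upgrade (b) to (c): once $V_\rat^{H_{A,B}}\neq 0$ and $H_{A,B}$ is connected, the fixed subspace is a nonzero sub-representation defined over $\integ[1/N']$, and over $\integ_\ell$ the group $\mathtt H_{A,B,\ell^\infty}$ acts through a finite quotient of its image on this lattice; arguing as in Lemma~\ref{L:GcircandGmodell} (eigenvalues that are roots of unity and $\equiv 1 \bmod \ell$ must be $1$) shows $\mathtt H_{A,B,\ell^\infty}$ acts trivially on the $\ell$-adic lattice of the fixed subspace, so $A[\ell^\infty](K_{B,\ell^\infty}) = A[\ell^\infty](K_B)$ is infinite for every $\ell$. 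The main obstacle is organizing the two cases in (a) $\Rightarrow$ (b) and being careful that the connectedness hypothesis on $H_{A,B}$ is exactly what lets specialization move the existence of a fixed vector from a density-one (or even infinite) set of primes to \emph{all} primes — without it one only gets a positive-density set, as in Lemma~\ref{L:infmodell}(b).
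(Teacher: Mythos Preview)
Your proposal is correct and follows essentially the same route as the paper: reduce to (a) $\Rightarrow$ (b),(c), split into the two cases (infinitely many $\ell$ with nontrivial $\ell$-torsion, or one $\ell_0$ with infinite $\ell_0^\infty$-torsion), and apply Lemmas~\ref{L:rlGl} and~\ref{L:rZlGZl} together with the connectedness of $H_{A,B}$ to obtain $r_\rat(H_{A,B},\rho)>0$, then specialize. Your upgrade from (b) to (c) is slightly roundabout --- since $\mathtt H_{A,B,\ell^\infty}\subset H_{A,B}(\integ_\ell)$ already acts trivially on the $\integ_\ell$-lattice inside $V_\rat^{H_{A,B}}$, no appeal to the eigenvalue argument of Lemma~\ref{L:GcircandGmodell} is needed (the paper instead notes that $r_{\rat_\ell}(H_{A,B},\rho)>0$ for every $\ell$ and concludes directly).
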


\begin{proof}
It suffices to show that (a) implies each of (b) and (c). Note that,
since $A\times B$ has independent representations,
$A[\ell^\infty](K_B) = A[\ell^\infty](K_{B,\ell^\infty})$.

By Lemma \ref{L:bounded image for KB}, $\set{\mathtt H_{A,B,\ell}}$
is a collection of bounded subgroups of the connected group
$H_{A,B}$, and each $\mathtt H_{A,B,\ell^\infty}$ is Zariski dense in
$H_{A,B,\rat_\ell}$.

 Suppose that (a) holds; then 
$A[\ell](K_{B,\ell^\infty})$ is nontrivial for infinitely many
$\ell$, or
$A[\ell_0^\infty](K_{B,\ell_0^\infty})$ is infinite for some
$\ell_0$.  In the former case, $A[\ell](K_{B,\ell})$ is nontrivial for infinitely many $\ell$ (Lemma \ref{L:ell is enough}), and thus
$r_\ell(H_{A,B},\rho_A)$ is positive for infinitely many
$\ell$; in the latter,
$r_{\rat_{\ell_0}}(H_{A,B},\rho_A)$ is positive.  Thus by Lemma
\ref{L:rlGl} or \ref{L:rZlGZl},
$\rho_\rat(H_{A,B},\rho_A)$ is positive; therefore, so is
$\rho_\ell(H_{A,B},\rho_A)$ and
$\rho_{\rat_{\ell}}(H_{A,B},\rho_A)$ for each
$\ell$.  Therefore, both (b) and (c) hold.
\end{proof}

In the absence of a connectedness hypothesis on $H_{A,B}$, our results
are less balanced.  Moreover, the Mumford--Tate conjecture doesn't immediately imply that $\mathtt H_{A,B,\ell}$ meets every geometrically irreducible component of $H_{A,B,\ell}$.  In situations where this is known, however, we can deduce the following statement.

\begin{lemma}
\label{L:TI for one good ell}
  Let $A$ and $B$ be abelian varieties over a number field $K$.
  Suppose that $A$ and $B$ satisfy the Mumford--Tate conjecture and
  that $A\times B$ has connected, independent Galois representations.

  Suppose that $A(K_B)_{\tors}$ is infinite, and that there exists some $\ell_0$ such that $A[\ell_0](K_{B,\ell_0})$ is nontrivial. Additionally, assume that $\mathtt H_{A,B,\ell_0}$ meets every geometrically irreducible component of $H_{A,B,\ell_0}$, and $H_{A,B,\integ_{\ell_{0}}}$ is smooth.
    Then for $\ell$ in a set of positive density, $A[\ell](K_B)$ is
  nontrivial and $A[\ell^\infty](K_B)$ is
  infinite.
\end{lemma}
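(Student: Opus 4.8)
The plan is to translate the statement into the representation-theoretic language of Section~\ref{Sect: repn_alg_gp} and feed it to Lemmas~\ref{L:infmodell}, \ref{L:GcircandGmodell} and \ref{L:oneellinfinity}, exactly as in the proof of the connected case above. Put $G = H_{A,B}$, with the $\integ$-model obtained as the Zariski closure of $H_{A,B}$ inside $G_A$; after inverting finitely many primes one gets a smooth affine group scheme over $\integ[1/N]$ whose identity component is reductive, being a connected normal subgroup of the reductive group $sG_A$ (Lemma~\ref{Lem: component is normal}, and as in the proof of Lemma~\ref{Lem: dim_and_rank}); the smoothness hypothesis at $\ell_0$ says $\ell_0 \nmid N$. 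Let $\rho$ be the restriction to $G$ of the defining representation of $G_A$ on $V = H_1(A,\integ[1/N])$, so that $r_\ell(\mathtt H_{A,B,\ell},\rho) \ge 1$ is exactly the statement that $A[\ell](K_{B,\ell}) \ne 0$, while $(V\otimes\rat_\ell)^{\mathtt H_{A,B,\ell^\infty}} \ne 0$ is exactly the statement that $A[\ell^\infty](K_{B,\ell^\infty})$ is infinite. First I record that $\set{\mathtt H_{A,B,\ell}}$ is a collection of bounded subgroups of $G$: the Mumford--Tate conjecture and Theorem~\ref{Thm: Cadoret_Moonen} make $\set{\mathtt G_{A\times B,\ell}}$ bounded in $G_{A\times B}$, Lemma~\ref{L:abstractboundedindex}(b) (equivalently Lemma~\ref{L:bounded image for KB}(a)) then bounds $\set{\mathtt H_{A,B,\ell}\cap H_{A,B}^\circ(\ff_\ell)}$ in $H_{A,B}^\circ$, and the index $[H_{A,B}:H_{A,B}^\circ]$ is finite; moreover $\mathtt H_{A,B,\ell^\infty}\cap H_{A,B}^\circ(\integ_\ell)$ is Zariski dense in $H_{A,B,\rat_\ell}^\circ$ (Lemma~\ref{L:bounded image for KB}(b)) and surjects onto $\mathtt H_{A,B,\ell}\cap H_{A,B}^\circ(\ff_\ell)$.

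\textbf{Case split.} By independence (Lemma~\ref{Lem: ind_tor_flds}), $A(K_B)_{\tors}=\bigoplus_\ell A[\ell^\infty](K_{B,\ell^\infty})$, so infinitude forces one of: (i) $A[\ell](K_{B,\ell^\infty})$ is nontrivial for infinitely many $\ell$; or (ii) $A[\ell_1^\infty](K_{B,\ell_1^\infty})$ is infinite for one prime $\ell_1$. In case~(i), Lemma~\ref{L:ell is enough} gives an infinite set $\ELL$ of primes with $A[\ell](K_{B,\ell})\ne 0$, and the hypothesized $\ell_0$ lies in $\ELL$ and meets every geometrically irreducible component of $H_{A,B,\ell_0}$. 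Apply Lemma~\ref{L:infmodell} to $G=H_{A,B}$, $\rho$, $\set{\mathtt H_{A,B,\ell}}$, $r=1$, $\ELL$, $\ell_0$: part~(b) produces a positive-density set $S$ with $r_\ell(H_{A,B},\rho)\ge 1$, so $A[\ell](K_{B,\ell})$, hence $A[\ell](K_B)$, is nontrivial for $\ell\in S$; and part~(a) gives $r_\rat(H_{A,B}^\circ,\rho)\ge 1$, whence $r_{\rat_\ell}(H_{A,B}^\circ,\rho)\ge 1$ for every $\ell$ since $H_{A,B}^\circ$ is connected. For $\ell\in S$ not dividing $[H_{A,B}:H_{A,B}^\circ]$, Lemma~\ref{L:GcircandGmodell} over $\integ_\ell$ then yields $r_{\rat_\ell}(H_{A,B},\rho)\ge 1$, i.e.\ $H_{A,B}(\rat_\ell)$ has a nonzero fixed vector in $V\otimes\rat_\ell$; a fortiori so does the subgroup $\mathtt H_{A,B,\ell^\infty}$, so $A[\ell^\infty](K_{B,\ell^\infty})$, hence $A[\ell^\infty](K_B)$, is infinite for all $\ell$ in this still-positive-density set.

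\textbf{Case (ii).} Here the subgroup $\mathtt H_{A,B,\ell_1^\infty}\cap H_{A,B}^\circ(\integ_{\ell_1})$ of $\mathtt H_{A,B,\ell_1^\infty}$ fixes a nonzero vector in $V\otimes\rat_{\ell_1}$, and by its Zariski density in $H_{A,B,\rat_{\ell_1}}^\circ$ this gives $r_{\rat_{\ell_1}}(H_{A,B}^\circ,\rho)\ge 1$, hence $r_\rat(H_{A,B}^\circ,\rho)\ge 1$. Now the hypotheses on $\ell_0$ let one switch on the full group: since $\mathtt H_{A,B,\ell_0}$ meets every component of $H_{A,B,\ell_0}$ and $H_{A,B}$ is smooth at $\ell_0$, the profinite group $\mathtt H_{A,B,\ell_0^\infty}$ — which surjects onto $\mathtt H_{A,B,\ell_0}$ and contains the subgroup $\mathtt H_{A,B,\ell_0^\infty}\cap H_{A,B}^\circ(\integ_{\ell_0})$ that is Zariski dense in $H_{A,B,\rat_{\ell_0}}^\circ$ — is Zariski dense in all of $H_{A,B,\rat_{\ell_0}}$. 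Combining $r_\rat(H_{A,B}^\circ,\rho)\ge 1$ with this density and running the argument of Lemma~\ref{L:oneellinfinity} (whose proof passes from the identity component to the full group precisely via Lemma~\ref{L:etaleoftentrivial}), one obtains $r_{\rat_\ell}(H_{A,B},\rho)\ge 1$ for $\ell$ in a positive-density set, and the same deduction as in case~(i) shows $A[\ell](K_B)$ nontrivial and $A[\ell^\infty](K_B)$ infinite for all such $\ell$.

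\textbf{Main obstacle.} The delicate point — and the place where the two extra hypotheses on $\ell_0$ are indispensable — is the passage between the full group $H_{A,B}$ and its identity component. The Mumford--Tate conjecture controls $\mathtt H_{A,B,\ell}\cap H_{A,B}^\circ(\ff_\ell)$ but says nothing about which components of $H_{A,B,\ell}$ the Galois image meets, and a disconnected $H_{A,B}$ can carry a nonzero $H_{A,B}^\circ$-fixed subspace with no $H_{A,B}$-fixed vector over $\rat$; the hypothesis that $\mathtt H_{A,B,\ell_0}$ meets every component (together with smoothness at $\ell_0$) is exactly what lets Lemma~\ref{L:etaleoftentrivial} promote a mod-$\ell_0$ fixed vector to fixed vectors for a positive-density set of primes, and the care in the proof lies in keeping the rational, mod-$\ell$, and $\rat_\ell$-level fixed spaces compatible across that set.
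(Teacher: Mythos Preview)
Your proof is correct and follows essentially the same approach as the paper's: the same case split (infinitely many $\ell$ with nontrivial $A[\ell](K_B)$ versus a single $\ell_1$ with infinite $A[\ell_1^\infty](K_B)$), invoking Lemma~\ref{L:infmodell} for the first case and tracing through the argument of Lemma~\ref{L:oneellinfinity} for the second, with the $\ell_0$-hypothesis feeding into Lemma~\ref{L:etaleoftentrivial} to pass from $H_{A,B}^\circ$ to $H_{A,B}$. Your write-up is in fact more explicit than the paper's on two points: you spell out why $\set{\mathtt H_{A,B,\ell}}$ is bounded in $H_{A,B}$ (not just in $H_{A,B}^\circ$), and you separately secure the $A[\ell^\infty](K_B)$-infinite conclusion by invoking Lemma~\ref{L:GcircandGmodell}, which the paper leaves implicit in the phrase ``as in the proof of Lemma~\ref{L:oneellinfinity}''.
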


\begin{proof}
If $A[\ell](K_B)$ is nontrivial for infinitely many $\ell$, this follows from Lemma \ref{L:infmodell}, applied to the collection of bounded subgroups $\set{\mathtt H_{A,B,\ell}}$ of $H_{A,B}$.  If instead there exists some $\ell_1$ such that $A[\ell_1^\infty](K_B)$ is infinite, then as in the proof of Lemma \ref{L:oneellinfinity} we find that $r_\rat(H_{A,B}^\circ,\rho_A)$ is positive; and that for $\ell$ in a set of positive density (namely, the set of $\ell$ relatively prime to $[H_{A,B}:H_{A,B}^\circ]$ and with the same Artin symbol as $\ell_0$ in some finite splitting field for $H_{A,B}/H_{A,B}^\circ$), $r_\ell(H_{A,B},\rho_A)$ is also positive.
\end{proof}

The hypothesis on $\ell_0$ in Lemma \ref{L:TI for one good ell} seems difficult to work with abstractly, although in explicit examples one can compute $H_{A,B}/H_{A,B}^\circ$ (e.g., Example \ref{Exa: torsion_infinite-11}) and thereby make progress.  However, we can still make a uniform statement purely in terms of torsion,  at the cost of surrendering some control of the precise field over which $A$ acquires infinite $\ell$-torsion.

\begin{lemma}
\label{L:newmainHabarbitrary}
  Let $A$ and $B$ be abelian varieties over a number field $K$.
  Suppose that $A$ and $B$ satisfy the Mumford--Tate conjecture and
  that $A\times B$ has connected independent Galois representations.

  Suppose that $A(K_B)_{\tors}$ is infinite.  Let $N_{A,B} =
  [H_{A,B}:H^\circ_{A,B}]$.

 \begin{enumerate}[label=(\alph*)]
  \item We have $r_\rat(H^\circ_{A,B},\rho_A)>0$. 
    \item For each $\ell$ there exists a finite extension $\widetilde{K_{B,\ell^\infty}}$ of
      $K_{B,\ell^\infty}$ such that $[\widetilde{K_{B,\ell^\infty}}:
      K_{B,\ell^\infty}] | N_{A,B}$ and
      $A[\ell^\infty](\widetilde{K_{B,\ell^\infty}})$ is infinite.
      \item For each $\ell$ there exists some $n_\ell$ such that $A[\ell^\infty](K_{B,\ell^\infty}K_{A,\ell^{n_\ell}})$ is infinite; and if $\ell \nmid N_{A,B}$, then we may take $n_\ell=1$.  (In fact, $K_{B,\ell^\infty}\subseteq \widetilde{K_{B,\ell^\infty}}\subseteq K_{B,\ell^\infty}K_{A,\ell^{n_\ell}}$.)
   \end{enumerate}
\end{lemma}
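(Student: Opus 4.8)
The hypotheses put us squarely in the setting of Section~\ref{Sect: repn_alg_gp}: by Lemma~\ref{L:bounded image for KB}, $\set{\mathtt H_{A,B,\ell}\cap H_{A,B}^\circ(\ff_\ell)}$ is a collection of bounded subgroups of the connected reductive group $H_{A,B}^\circ$, and $\mathtt H_{A,B,\ell^\infty}\cap H_{A,B}^\circ(\integ_\ell)$ is Zariski dense in $H^\circ_{A,B,\rat_\ell}$ for every $\ell$. The plan is to feed the infinitude of $A(K_B)_{\tors}$ into these structures via the representation $\rho_A$ of $H_{A,B}$ on $H_1(A,\rat)$, just as in the connected case handled by the first lemma of \S\ref{Sect: potentially-infinite}, but now keeping careful track of the disconnection of $H_{A,B}$.

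\smallskip
\textbf{Step 1 (part (a)).} First I would dichotomize the hypothesis that $A(K_B)_{\tors}$ is infinite exactly as in the proof of that first lemma: either $A[\ell](K_{B,\ell^\infty})$ is nontrivial for infinitely many $\ell$, or $A[\ell_0^\infty](K_{B,\ell_0^\infty})$ is infinite for some $\ell_0$. (Here I use independence, via Lemma~\ref{Lem: ind_tor_flds}, to replace $K_B$ by $K_{B,\ell^\infty}$.) In the first case, Lemma~\ref{L:ell is enough} upgrades this to $A[\ell](K_{B,\ell})$ nontrivial for infinitely many $\ell$, so $r_\ell(\mathtt H_{A,B,\ell}\cap H^\circ_{A,B}(\ff_\ell),\rho_A)>0$ for infinitely many $\ell$; Lemma~\ref{L:rlGl}(a), applied to the connected group $H^\circ_{A,B}$ and its collection of bounded subgroups, gives $r_\rat(H^\circ_{A,B},\rho_A)>0$. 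In the second case, the Zariski-density statement of Lemma~\ref{L:bounded image for KB}(b) together with Lemma~\ref{L:rZlGZl}(a) yields the same conclusion. This proves (a).

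\smallskip
\textbf{Step 2 (part (b)).} By (a) there is a nonzero $\rat$-subspace $W\subseteq H_1(A,\rat)$ fixed by $H^\circ_{A,B}$; since $H^\circ_{A,B}\trianglelefteq H_{A,B}$ is normal, $W\otimes\rat_\ell$ spans a $\mathtt H_{A,B,\ell^\infty}$-stable subspace, and the action of $\mathtt H_{A,B,\ell^\infty}$ on $W\otimes\rat_\ell/(\text{torsion})$ factors through the finite quotient $H_{A,B}(\rat_\ell)/H^\circ_{A,B}(\rat_\ell)$, whose order divides $N_{A,B}=[H_{A,B}:H^\circ_{A,B}]$ (using that $H_{A,B}/H^\circ_{A,B}$ is étale and Lang's theorem as in Lemma~\ref{L:GcircandGmodell}). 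Concretely: there is a torsion point, or rather a positive-rank piece of the Tate module, of $A$ whose stabilizer in $\gal(K_{B,\ell^\infty})$ has index dividing $N_{A,B}$; taking the fixed field of that stabilizer produces $\widetilde{K_{B,\ell^\infty}}$ with $[\widetilde{K_{B,\ell^\infty}}:K_{B,\ell^\infty}]\mid N_{A,B}$ over which $A$ has infinite $\ell$-torsion. The cleanest way to organize this is to choose a $\integ_\ell$-lattice $W_\ell\subset W\otimes\rat_\ell$ stable under $\Gamma_{A/K,\ell^\infty}$, observe $\mathtt H_{A,B,\ell^\infty}$ acts on $W_\ell$ through a finite group of order dividing $N_{A,B}$, and let $\widetilde{K_{B,\ell^\infty}}$ be the fixed field of $\mathtt H_{A,B,\ell^\infty}\cap\ker(\GL_{W_\ell}(\integ_\ell)\to\GL_{W_\ell}(\integ_\ell))$—more precisely the kernel of the action on $W_\ell$; infinitely many torsion points of $A$ lie in $W_\ell$, hence in this fixed field.

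\smallskip
\textbf{Step 3 (part (c)).} For the refinement, note that the obstruction to descending $\widetilde{K_{B,\ell^\infty}}$ back into a concrete torsion field of $A$ is precisely the disconnectedness of $H_{A,B}$: adjoining enough $\ell$-power torsion of $A$ kills the relevant component group. When $\ell\nmid N_{A,B}$ the image $\mathtt H_{A,B,\ell^\infty}$ already acts on $W\otimes\ff_\ell$ through a group of order prime to $\ell$ that is also $\equiv 1\bmod\ell$, hence trivially (the eigenvalue argument of Lemma~\ref{L:GcircandGmodell}), so $A[\ell](K_{B,\ell^\infty})$ is already nontrivial and one takes $n_\ell=1$. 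For the remaining primes, adjoin $K_{A,\ell^{n_\ell}}$ for $n_\ell$ large enough that $\gal(K_{B,\ell^\infty}K_{A,\ell^{n_\ell}})$ acts on $W_\ell$ through the pro-$\ell$ congruence kernel $\ker(\GL_{W_\ell}(\integ_\ell)\to\GL_{W_\ell}(\integ_\ell/\ell^{n_\ell}))$; since the action was through a finite group, for $n_\ell$ large this forces the action on all of $W_\ell$ to be trivial, so $A[\ell^\infty](K_{B,\ell^\infty}K_{A,\ell^{n_\ell}})$ is infinite. The inclusion $K_{B,\ell^\infty}\subseteq\widetilde{K_{B,\ell^\infty}}\subseteq K_{B,\ell^\infty}K_{A,\ell^{n_\ell}}$ is then immediate: $\widetilde{K_{B,\ell^\infty}}$ is by construction generated over $K_{B,\ell^\infty}$ by finitely many $\ell$-power torsion points of $A$, hence sits inside $K_{B,\ell^\infty}K_{A,\ell^{n_\ell}}$ for $n_\ell$ large.

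\smallskip
\textbf{Main obstacle.} The substantive point—already isolated in part (a)—is the passage from "infinite torsion scattered across infinitely many primes (possibly in tiny quantities)" to "a positive-dimensional fixed space for the generic fiber $H^\circ_{A,B,\rat}$". This is exactly what the machinery of \S\ref{Sect: repn_alg_gp} (Lemmas~\ref{L:r_ell_implies_r_Q}, \ref{L:specialfix}, \ref{L:rlGl}, \ref{L:rZlGZl}) was built to handle, so once that input is granted the bookkeeping for (b) and (c) is routine group theory: everything is driven by the facts that $H_{A,B}/H^\circ_{A,B}$ is a finite étale group of order $N_{A,B}$ and that adjoining $\ell$-power torsion of $A$ produces pro-$\ell$ extensions, on which a finite order-$N_{A,B}$ action with $\ell\nmid N_{A,B}$ must be trivial. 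The only mild care needed is to choose Galois-stable $\integ_\ell$-lattices compatibly and to make the "$n_\ell$ large" quantifier explicit in terms of the order of the finite group through which the action factors.
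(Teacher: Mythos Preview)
Your arguments for (a) and (b) are essentially correct and close to the paper's. For (b), the paper takes a slightly cleaner route: it defines $\widetilde{K_{B,\ell^\infty}}$ as the smallest extension of $K_{B,\ell^\infty}$ whose absolute Galois group has image under $\rho_{A,\ell^\infty}$ contained in $H_{A,B}^\circ(\integ_\ell)$, rather than as the field over which a chosen lattice $W_\ell$ becomes pointwise fixed. Either construction works.

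There is, however, a genuine error in Step~3 for the case $\ell \nmid N_{A,B}$. You assert that $\mathtt H_{A,B,\ell^\infty}$ acts on $W\otimes\ff_\ell$ through a group ``that is also $\equiv 1\bmod\ell$'', and conclude that $A[\ell](K_{B,\ell^\infty})$ is already nontrivial. But nothing forces the action of $\mathtt H_{A,B,\ell^\infty}$ on $W_\ell/\ell W_\ell$ to be trivial: having order prime to $\ell$ does not imply this, and the eigenvalue argument of Lemma~\ref{L:GcircandGmodell} requires triviality mod $\ell$ as an \emph{input}, not as a conclusion. Indeed, Example~\ref{Exa: torsion_infinite-11} exhibits a pair with $N_{A,B}=9$ for which $A_1[\ell](K_{A_2,\ell^\infty})$ is trivial for infinitely many primes $\ell\neq 3$; so your claim is false in general, and the lemma really does require adjoining $K_{A,\ell}$.

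The fix is already implicit in your general-$n_\ell$ argument, specialized to $n_\ell=1$: after adjoining $K_{A,\ell}$, the Galois group acts trivially on all of $A_\ell$, hence on $W_\ell/\ell W_\ell$; its image in $\GL_{W_\ell}(\integ_\ell)$ is then both of order dividing $N_{A,B}$ (prime to $\ell$) and congruent to the identity mod $\ell$, so now the eigenvalue argument applies and the action on $W_\ell$ is trivial. The paper phrases the same step purely in terms of field degrees: $[\widetilde{K_{B,\ell^\infty}}:K_{B,\ell^\infty}]$ is prime to $\ell$, while $\gal(K_{B,\ell^\infty}K_{A,\ell^n}/K_{B,\ell^\infty}K_{A,\ell})$ is an $\ell$-group, so $\widetilde{K_{B,\ell^\infty}}\subseteq K_{B,\ell^\infty}K_{A,\ell}$.
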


\begin{proof}
Since $A(K_B)_{\tors}$ is infinite, there exist infinitely many
$\ell$ such that $A[\ell](K_B)$ is nontrivial, or there is some
$\ell_0$ such that $A[\ell_0^\infty](K_B)$ is infinite .  By Lemma
\ref{L:infmodell} or \ref{L:rZlGZl} as appropriate,
$r := r_\rat(H_{A,B}^\circ,\rho_A)>0$.

Now fix a prime $\ell$.  Let $\widetilde{K_{B,\ell^\infty}}$ be the smallest
extension of $K_{B,\ell^\infty}$ for which
$\rho_{A,\ell^\infty}(\gal(\widetilde{K_{B,\ell^\infty}})) \subseteq
H^\circ_{A,B}(\integ_\ell)$.  Then
$[\widetilde{K_{B,\ell^\infty}}:K_{B,\ell^\infty}] | N_{A,B}$, and
$\rank_{\integ_\ell} A[\ell^\infty](\widetilde{K_{B,\ell^\infty}}) \ge r>0$.

Moreover, we have inclusions $K_{B,\ell^\infty} \subseteq
\widetilde{K_{B,\ell^\infty}} \subseteq (K_{B,\ell^\infty})_{A,\ell^\infty}$.
Since the first extension is finite, there exists some $n$ such that
$\widetilde{K_{B,\ell^\infty}} \subseteq (K_{B,\ell^\infty})_{A,\ell^n}$.  Now, if
$n\ge 2$, then 
$\gal((K_{B,\ell^\infty})_{A,\ell^n} / (K_{B,\ell^\infty})_{A,\ell})$
is a group whose order is a power of $\ell$.  Consequently, if $\ell\nmid N_{A,B}$,
then $\widetilde{K_{B,\ell^\infty}} \subseteq (K_{B,\ell^\infty})_{A,\ell}=
K_{A,\ell}K_{B,\ell^\infty}$.
\end{proof}

\begin{remark}
In the context of Lemma \ref{L:newmainHabarbitrary}(b), one might hope that there is a finite extension
$L_B$ of $K_B$ such that $A[\ell](L_B)$ is nontrivial for each
$\ell$.  (Even more optimistically, one might hope that such an $L_B$
is the compositum of $K_B$ and a finite extension of $K$.)  However, there is no independence-of-$\ell$ connectedness result for Galois representations of infinite algebraic extensions of $\rat$ to which one might appeal.  In fact, we will see below (\ref{Exa: torsion_infinite-11}) that in general, no such uniform-in-$\ell$ finite extension exists.  In this sense, Lemma \ref{L:newmainHabarbitrary}(b) is optimal without additional hypotheses.
\end{remark}

\begin{thm}
  \label{T:A abs simp}
  Let $A$ and $B$ be abelian varieties over a number field $K$.
  Suppose that $A$ and $B$ satisfy the Mumford--Tate conjecture, and
  that $A$ is absolutely simple.  Then the following are equivalent:
  \begin{enumerate}[label=(\alph*)]
  \item $A$ is potentially torsion infinite for $B$;
  \item $\dim H_{A,B} = 0$;
  \item $\dim G_{A\times B} = \dim G_B$;
  \item $\rank G_{A\times B} = \rank G_B$; and
    \item there exists a finite extension $K'$ of $K$ such that for
      all sufficiently large $\ell$, $K'_{A,\ell^\infty} \subset K_B
      K'_{A,\ell}$, and thus $A[\ell^\infty](K_B K'_{A,\ell}) = A_{\ell^\infty}$.
  \end{enumerate}
\end{thm}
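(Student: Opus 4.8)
The plan is to establish the cycle (a) $\Rightarrow$ (b) $\Rightarrow$ (e) $\Rightarrow$ (a) together with the purely group-theoretic equivalences (b) $\Leftrightarrow$ (c) $\Leftrightarrow$ (d). The latter chain is immediate from the Goursat diagram~\eqref{D:MT goursat}: by construction $H_{A,B}\cong M_{A,B}=\ker(\pi_B\circ\iota)$, so the exact sequence $0\to M_{A,B}\to G_{A\times B}\to G_B\to 0$ gives $\dim H_{A,B}=\dim G_{A\times B}-\dim G_B$, and Lemma~\ref{Lem: dim_and_rank}, applied with $G_{12}=G_{A\times B}$ and $G_2=G_B$, converts the vanishing of $\dim M_{A,B}$ into each of (c) and (d). The implication (e) $\Rightarrow$ (a) is equally quick: the field $L:=K'_{A,\ell}$ is a finite extension of $K$ and $L_B=LK_B=K_BK'_{A,\ell}$, so (e) gives $A[\ell^\infty](L_B)=A_{\ell^\infty}$, which is infinite; hence $A$ is potentially torsion infinite for $B$ directly from the definition.

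For (a) $\Rightarrow$ (b), I would first pass to a convenient base field: a finite extension of $K$ changes neither $G_A$, $G_B$, $G_{A\times B}$ (hence not $H_{A,B}$), nor the truth of (b), nor the validity of the Mumford--Tate conjecture, and (Lemmas~\ref{Lem: K^conn} and~\ref{Lem: Serre_ind_repn}) it may be chosen so that $A\times B$ has connected, independent Galois representations; so we may assume in addition that $A(K_B)_{\tors}$ is infinite and that these hypotheses hold. Lemma~\ref{L:newmainHabarbitrary}(a) then gives $r_\rat(H^\circ_{A,B},\rho_A)>0$, i.e.\ the fixed subspace $W:=H_1(A,\rat)^{H^\circ_{A,B}}$ is nonzero. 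Since $H_{A,B}$ is normal in the connected group $G_A=\mt(A)$, so is $H^\circ_{A,B}$ (Lemma~\ref{Lem: component is normal}), whence $W$ is a nonzero $G_A$-subrepresentation of $H_1(A,\rat)$ (Lemma~\ref{Lem: fixed_subsp_sub_repn}). But $A$ is absolutely simple, so $\End_{\mt(A)}(H_1(A,\rat))=\End^0(A_{\bar K})$ is a division algebra; since $\mt(A)$ is reductive, $H_1(A,\rat)$ is a semisimple $\mt(A)$-module whose endomorphism algebra is a division algebra, hence is irreducible. Therefore $W=H_1(A,\rat)$, so $H^\circ_{A,B}$ acts trivially on $H_1(A,\rat)$; since $\mt(A)\hookrightarrow\GL(H_1(A,\rat))$, this forces $H^\circ_{A,B}=\{e\}$, i.e.\ $\dim H_{A,B}=0$.

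For (b) $\Rightarrow$ (e), choose $K'/K$ finite so that $A\times B$ has connected, independent Galois representations over $K'$, and set $h:=\#H_{A,B}(\bar\rat)$, which is finite since $\dim H_{A,B}=0$. The crucial input is a uniform bound. Under the Mumford--Tate conjecture (via connectedness and Theorem~\ref{Thm: Cadoret_Moonen}), the Galois group $\gal(K'_{A,\ell^\infty}K'_{B,\ell^\infty}/K'_{B,\ell^\infty})=\mathtt M_{A,B,\ell^\infty}\cong\mathtt H_{A,B,\ell^\infty}$ embeds into $H_{A,B}(\integ_\ell)$, which embeds into $H_{A,B}(\bar\rat_\ell)$, a set of $h$ elements; hence it has order at most $h$, independently of $\ell$. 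On the other hand $\gal(K'_{A,\ell^\infty}/K'_{A,\ell})$ is pro-$\ell$, so restriction to $K'_{A,\ell^\infty}$ exhibits $\gal(K'_{A,\ell^\infty}K'_{B,\ell^\infty}/K'_{A,\ell}K'_{B,\ell^\infty})$ both as a subgroup of $\mathtt H_{A,B,\ell^\infty}$ and as an $\ell$-group; for $\ell>h$ it is therefore trivial. Thus $K'_{A,\ell^\infty}\subseteq K'_{A,\ell}K'_{B,\ell^\infty}\subseteq K'_{A,\ell}K_B=K_BK'_{A,\ell}$, and since $K'_{A,\ell^\infty}$ carries all $\ell$-power torsion of $A$, we obtain $A[\ell^\infty](K_BK'_{A,\ell})=A_{\ell^\infty}$ for all $\ell>h$.

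The step I expect to be the main obstacle is the uniform bound underpinning (b) $\Rightarrow$ (e): everything rests on placing $\gal(K'_{A,\ell^\infty}K'_{B,\ell^\infty}/K'_{B,\ell^\infty})$ inside the \emph{fixed} finite group $H_{A,B}(\integ_\ell)$, whose order is bounded independently of $\ell$, and this is exactly where the Mumford--Tate conjecture --- through the Cadoret--Moonen bounded-index theorem (Theorem~\ref{Thm: Cadoret_Moonen}) --- and the reduction to connected Galois representations are indispensable. The absolute simplicity of $A$, by contrast, enters only in (a) $\Rightarrow$ (b), where it forces $H_1(A,\rat)$ to be an irreducible $\mt(A)$-representation; without it, a single torsion-infinite simple isogeny factor of $A$ already defeats the implication.
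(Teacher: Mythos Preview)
Your argument is correct and follows essentially the same route as the paper: (a)$\Rightarrow$(b) via Lemma~\ref{L:newmainHabarbitrary}(a) and irreducibility of $H_1(A,\rat)$, the group-theoretic equivalences (b)$\Leftrightarrow$(c)$\Leftrightarrow$(d) via Lemma~\ref{Lem: dim_and_rank}, and (b)$\Rightarrow$(e) by trapping a pro-$\ell$ Galois group inside the finite group $H_{A,B}(\integ_\ell)$ of order bounded by $[H_{A,B}:H_{A,B}^\circ]$. The paper packages (b)$\Rightarrow$(e) by invoking Lemma~\ref{L:newmainHabarbitrary}(c), but your direct argument is the same one that underlies that lemma's proof.

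One small correction to your commentary: the Mumford--Tate conjecture (via Theorem~\ref{Thm: Cadoret_Moonen}) is \emph{not} needed for (b)$\Rightarrow$(e); the containment $\mathtt H_{A,B,\ell^\infty}\subset H_{A,B}(\integ_\ell)$ follows already from the unconditional inclusion $\Gamma_{A\times B/K',\ell^\infty}\subset G_{A\times B}(\integ_\ell)$ available once $K'=(K')^{\mathrm{conn}}$. The conjecture is genuinely used only in (a)$\Rightarrow$(b), where Lemma~\ref{L:newmainHabarbitrary}(a) relies on the bounded-index statement of Theorem~\ref{Thm: Cadoret_Moonen} (through Lemma~\ref{L:bounded image for KB}).
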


\begin{proof}
  Suppose that $A$ is potentially torsion infinite for $B$.  Then,
  possibly after replacing $K$ with a finite extension, we find that
  $r := r_\rat(H^\circ_{A,B},\rho_A)>0$ (Lemma
  \ref{L:newmainHabarbitrary}(a)).  Note that $H_{A,B}$, and thus
  $H^\circ_{A,B}$, are normal subgroups of $\mt(A)$ (Lemma \ref{Lem:
    component is normal}).

Because $A$ is absolutely simple and $\mt(A)$ is reductive,
$H_1(A,\rat)$ is an irreducible representation of $\mt(A)$. Now, $H_1(A,\rat)^{H^\circ_{A,B}}$ is a
sub-$\mt(A)$-representation (Lemma \ref{Lem: fixed_subsp_sub_repn}) of
$H_1(A,\rat)$.  Since $\dim H_1(A,\rat)^{H_{A,B}^\circ} = r >0$ and $H_1(A,\rat)$ is irreducible, it follows that $H_1(A,\rat)^{H_{A,B}^\circ} =
H_1(A,\rat)$.  This implies that $H_{A,B}^\circ$ is trivial, and thus
$\dim H_{A,B} = 0$.

The converse, that (b) implies (a),  is easy.  Indeed, suppose $\dim H_{A,B} = 0$, and fix a prime
$\ell$.  Then $\gal(K_B)$ acts on $T_\ell A$ through a subgroup of
$H_{A,B}(\integ_\ell)$, which is by hypothesis a finite group; after
passage to a finite extension, the Galois group acts trivially on all
$\ell$-power torsion points.

The equivalence of (b), (c), and (d) is a standard observation about
reductive groups (Lemma \ref{Lem: dim_and_rank}).

Now suppose (b) holds.   After replacing $K$ with a suitable finite
extension, we may and do assume that $A\times B$ has connected
independent Galois representations. Then Lemma
\ref{L:newmainHabarbitrary}(c) shows that there exists some $n_\ell$
such that $\gal(K_{B} K_{A,\ell^{n_\ell}})$ acts trivially on $T_\ell
A$; and that if $\ell \nmid [H_{A,B}:H^\circ_{A,B}]$, then we may
take $n_\ell = 1$.

The proof is completed with the trivial observation that (e) implies (a).
\end{proof}

\begin{remark}\label{R:A isotypic}
  In Theorem \ref{T:A abs simp}, by Lemma \ref{L:tf for products}, it
  suffices to assume that $A$ is absolutely isotypic.
\end{remark}

\begin{cor}
\label{C: TF cm elliptic curve}
Let $B/K$ be an abelian variety over a number field for which the
Mumford--Tate conjecture holds, and let $E/K$ be an elliptic curve
with complex multiplication.  Then either $E$ is potentially torsion
infinite for $B$ or $sG_{E\times B} = sG_E \times sG_B$.
\end{cor}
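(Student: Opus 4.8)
The plan is to exploit the fact that for a CM elliptic curve $E$ the Hodge group $sG_E$ is extremely small: it is a connected torus of dimension at most one. Consequently the normal subgroup $H_{E,B}\subseteq sG_E$ appearing in the Goursat diagram \eqref{E:special hodge goursat} for the Hodge groups has essentially no room to vary, and a clean dichotomy will emerge.

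First I would record that, since $E$ has complex multiplication, $\mt(E)$ is a torus; as it is a subgroup of $\GL_{H_1(E,\rat)}\cong \GL_2$, it has dimension at most $2$, so $sG_E=\ker(\mt(E)\twoheadrightarrow\G_m)$ is a torus of dimension at most $1$ (in fact exactly $1$, but the inequality is all that matters). Being a Hodge group, $sG_E$ is connected. By Section~\ref{SS:MT pair}, $H_{E,B}$ is a normal algebraic subgroup of $sG_E$; since $sG_E$ is a connected torus of dimension $\le 1$, the connected component $H_{E,B}^\circ$ (again normal in $sG_E$ by Lemma~\ref{Lem: component is normal}) is a subtorus of $sG_E$, hence either trivial or all of $sG_E$. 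Thus either $\dim H_{E,B}=0$, or $H_{E,B}=sG_E$.

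In the first case I would invoke Theorem~\ref{T:A abs simp}: $E$ is absolutely simple (being an elliptic curve), the Mumford--Tate conjecture holds for $E$ (the CM case) and for $B$ (by hypothesis), so the implication (b)$\Rightarrow$(a) of that theorem gives that $E$ is potentially torsion infinite for $B$. In the second case I would apply Remark~\ref{R:goursat product} to the Hodge-group Goursat diagram with $G_1=sG_E$, $G_2=sG_B$, $G_{12}=sG_{E\times B}$ and $H_{12}=H_{E,B}=G_1$, concluding $sG_{E\times B}=sG_E\times sG_B$. Since the two cases are exhaustive, one of the two alternatives in the statement always holds.

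I do not expect a genuine obstacle here; the only point requiring a moment's care is the bound $\dim sG_E\le 1$, which reduces to the fact that $\mt(E)$ is a torus (forced by the CM hypothesis) sitting inside a two-dimensional general linear group, together with the connectedness of Hodge groups.
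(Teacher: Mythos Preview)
Your proof is correct and follows essentially the same approach as the paper's: both arguments use that $sG_E$ is a one-dimensional torus to force the dichotomy $\dim H_{E,B}=0$ or $H_{E,B}=sG_E$, then invoke Theorem~\ref{T:A abs simp} and Remark~\ref{R:goursat product} respectively. The only cosmetic difference is that the paper argues by contrapositive (assuming $E$ is essentially torsion finite and deducing $\dim H_{E,B}>0$), whereas you set up the dichotomy directly.
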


\begin{proof}
The Mumford--Tate conjecture holds for $E$ and thus for $E\times B$, and $E$ is visibly absolutely simple;  it therefore suffices to show that if $E$ is essentially torsion finite for
  $B$ then the special Mumford--Tate group of the product is the
  product of the special Mumford--Tate groups.  By Theorem \ref{T:A abs simp},
$\dim H_{E,B}>0$.  Since $H_{E,B}$ is a positive-dimensional subgroup
of the one-dimensional torus $sG_E$, it follows that $H_{E,B} =
sG_E$, and thus $sG_{E\times B} = sG_E \times sG_B$ (Remark
\ref{R:goursat product}).
\end{proof}

\begin{cor} Let $A$ and $B$ be abelian varieties over a number field, of respective dimensions $d_A$ and $d_B$.  Suppose that $A$ and $B$ satisfy the Mumford--Tate conjecture, that $A$ is absolutely simple, and that
\begin{equation}
\label{E:dimension relation}
\log_2 d_A \ge 3 d_B-1.
\end{equation}
Then $A$ is essentially torsion finite for $B$.
\end{cor}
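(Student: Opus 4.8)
The plan is to argue by contradiction, using Theorem~\ref{T:A abs simp} to reduce essential torsion finiteness to a statement about ranks of Mumford--Tate groups, and then to play the smallness of $\rank\mt(B)$ against the fact that $H_1(A,\rat)$ is a representation of $\mt(A)$ of minuscule type. Since $A$ is absolutely simple and the Mumford--Tate conjecture holds for $A$ and $B$, Theorem~\ref{T:A abs simp} applies, and in particular tells us that $A$ is potentially torsion infinite for $B$ if and only if $\rank G_{A\times B}=\rank G_B$ (equivalently $\dim H_{A,B}=0$). So it suffices to rule this out. If $d_B=0$ then $B$ is trivial, $K_B=K$, and $A(K_B)_{\tors}$ is finite by the Mordell--Weil theorem, so $A$ is trivially essentially torsion finite for $B$; hence I may assume $d_B\ge 1$ and suppose, for contradiction, that $\rank G_{A\times B}=\rank G_B$.

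Next I would extract two facts. First, the canonical surjection $G_{A\times B}\twoheadrightarrow G_A$ of diagram~\eqref{D:MT goursat} gives $\rank\mt(A)=\rank G_A\le\rank G_{A\times B}=\rank G_B$, and since $G_B=\mt(B)$ embeds in $\GL_{H_1(B,\rat)}$ with $\dim_\rat H_1(B,\rat)=2d_B$, we obtain
\[
\rank\mt(A)\ \le\ \rank G_B\ \le\ 2d_B
\]
(one can do a little better using the symplectic polarization, but this crude bound already suffices). Second --- and this is the key input --- the Hodge cocharacter $\mu\colon\gp_{m,\cx}\to\mt(A)_\cx$ acts on $H_1(A,\rat)\otimes\cx$ with exactly two eigenvalues, because the Hodge filtration of the weight-one Hodge structure $H_1(A,\rat)$ has length two. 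Consequently every irreducible constituent of $H_1(A,\rat)\otimes\bar\rat$ is a minuscule representation of $\mt(A)_{\bar\rat}$ (being an exterior tensor product of minuscule representations of the simple factors), and since a minuscule representation of a connected reductive group of rank $r$ has at most $2^r$ weights, each of multiplicity one, the absolute simplicity of $A$ --- which controls the number and multiplicities of these constituents in terms of $\End^0(A)$ --- yields
\[
2\dim A\ =\ \dim H_1(A,\rat)\ \le\ 2^{\rank\mt(A)}.
\]

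Combining the two facts gives $2\dim A\le 2^{2d_B}$, i.e. $\log_2 d_A\le 2d_B-1$. But hypothesis~\eqref{E:dimension relation} asserts $\log_2 d_A\ge 3d_B-1$, so $3d_B-1\le 2d_B-1$, forcing $d_B\le 0$ and contradicting $d_B\ge 1$. This contradiction shows that $A$ is not potentially torsion infinite for $B$, i.e. $A$ is essentially torsion finite for $B$.

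The step that requires real work is the inequality $2\dim A\le 2^{\rank\mt(A)}$. When $\End^0(A)=\rat$ it is immediate, since then $H_1(A,\rat)\otimes\bar\rat$ is itself an irreducible minuscule representation of $\mt(A)_{\bar\rat}$. In general, $H_1(A,\rat)\otimes\bar\rat$ decomposes as $\bigoplus_{k=1}^{e}U_k^{\oplus m}$ with the $U_k$ pairwise non-isomorphic and Galois-conjugate, and one must bound the product $e\cdot m\cdot\dim U_k$ against $2^{\rank\mt(A)}$; here absolute simplicity (through the structure of $\End^0(A)$) is essential. For example, when $A$ has complex multiplication the $U_k$ are characters of a torus, and --- as in the character-lattice discussion of Section~\ref{Sect: CM} --- the $2\dim A$ weights of $H_1(A,\rat)\otimes\bar\rat$ under $\mt(A)$ are pairwise distinct, so their number, hence $2\dim A$, is at most $2^{\rank\mt(A)}$. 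I expect this bookkeeping across the Albert types, rather than the overall strategy, to be the main obstacle.
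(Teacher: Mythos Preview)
Your overall strategy---contrapose via Theorem~\ref{T:A abs simp} and compare ranks---matches the paper's.  The gap is exactly where you locate it: the inequality $2\dim A \le 2^{\rank\mt(A)}$ for absolutely simple $A$.  You correctly observe that each irreducible constituent of $H_1(A,\bar\rat)$ is minuscule and so has dimension at most $2^{\rank\mt(A)}$, but you have not bounded the number $e$ of constituents nor their common multiplicity $m$; your remark that this ``bookkeeping across the Albert types'' remains to be done is honest but means the proof is incomplete.  This step is genuinely the content of the argument: it is precisely the sort of bound proved by Orr \cite[Thm.~1.2]{orr15}, and the paper simply quotes a weak form of that result, namely $r_A \ge \tfrac{1}{3}(\log_2 d_A + 2)$, rather than attempting to reprove it.

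Two smaller points.  First, the paper uses the sharper bound $r_B \le d_B+1$ coming from $\mt(B)\subseteq\operatorname{GSp}_{2d_B}$, not your $r_B\le 2d_B$; with Orr's constant $\tfrac13$ the sharper bound is needed to match the hypothesis~\eqref{E:dimension relation}.  Second, the factor $3$ in the hypothesis $\log_2 d_A\ge 3d_B-1$ reflects the $\tfrac13$ in Orr's bound.  Your claimed inequality $2d_A\le 2^{r_A}$, if it held, would let one weaken the hypothesis to roughly $\log_2 d_A\ge d_B$; so you are in effect asserting something stronger than what the paper (or Orr) establishes, and it is not clear this stronger bound is known in general.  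The clean fix is to cite Orr's theorem for the lower bound on $\rank\mt(A)$ and use $r_B\le d_B+1$; then your contradiction goes through.
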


\begin{proof}
Let $r_A$ and $r_B$ denote the respective ranks of the Mumford--Tate groups of $A$ and $B$.  On one hand, we have the trivial bound $r_B \le d+1$.  On the other hand, a weak form of \cite[Thm. 1.2]{orr15} implies that $r_A \ge \frac 13(\log_2 d_A+2)$.  Therefore, hypothesis \eqref{E:dimension relation} implies that $r_A > r_B$.  Since $\rank(G_A)-\rank(G_B) = \rank(H_{A,B})-\rank(H_{B,A})$, we find that the rank of $H_{A,B}$ is positive, and thus $\dim H_{A,B}>0$.  Now apply Theorem \ref{T:A abs simp}.
\end{proof}

\begin{cor}
\label{C:mutually torsion infinite}
Suppose that $A$ and $B$ are two absolutely simple abelian varieties
over $K$, and that the Mumford--Tate conjecture holds for $A\times
B$.  Then the following are equivalent.
\begin{enumerate}[label=(\alph*)]
\item $A$ and $B$ are mutually potentially torsion infinite.
  \item The natural surjections $G_{A\times B} \to G_A$ and
    $G_{A\times B} \to G_B$ are isogenies.
  \item The natural surjections $sG_{A\times B} \to sG_A$ and
    $sG_{A\times B} \to sG_B$ are isogenies.
  \item $\dim(G_{A\times B}) = \dim G_A = \dim G_B$;
  \item $\dim(sG_{A\times B}) = \dim sG_A = \dim sG_B$.
  \item $\rank(G_{A\times B}) = \rank G_A = \rank G_B$;
  \item $\rank(sG_{A\times B}) = \rank sG_A = \rank sG_B$.
  \end{enumerate}
\end{cor}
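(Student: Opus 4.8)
The plan is to reduce everything to Theorem~\ref{T:A abs simp}, applied first to the ordered pair $(A,B)$ and then to $(B,A)$, together with the structural Lemma~\ref{Lem: dim_and_rank} for Goursat diagrams of reductive groups. First I would record the elementary reduction that the Mumford--Tate conjecture for $A\times B$ implies it for each of $A$ and $B$: the representation $\rho_{A/K,\ell^\infty}$ is a quotient of $\rho_{A\times B/K,\ell^\infty}$, so $\mathcal G^\circ_{A/K,\ell} = \pi_A(\mathcal G^\circ_{A\times B/K,\ell})$, while $\mt(A) = \pi_A(\mt(A\times B))$; hence equality of $\mathcal G^\circ_{A\times B/K,\ell}$ with $G_{A\times B,\ell}$ pushes forward to the corresponding equality for $A$, and symmetrically for $B$. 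Thus $A$, $B$ and $A\times B$ all satisfy the hypotheses needed below. Note also that every group appearing in the statement is unchanged under a finite base extension of $K$, so we are free to enlarge $K$ whenever convenient.

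Applying Theorem~\ref{T:A abs simp} to $(A,B)$ gives that $A$ is potentially torsion infinite for $B$ if and only if $\dim H_{A,B}=0$, if and only if $\dim G_{A\times B}=\dim G_B$, if and only if $\rank G_{A\times B}=\rank G_B$. Applying it to $(B,A)$ gives the analogous chain with the roles of $A$ and $B$ exchanged. Conjoining the two statements yields at once the equivalence of (a), (d) and (f). For (b), I would invoke Lemma~\ref{Lem: dim_and_rank} applied to the diagram \eqref{D:MT goursat}: the surjection $G_{A\times B}\twoheadrightarrow G_A$ is an isogeny precisely when $\dim G_{A\times B}=\dim G_A$, and likewise $G_{A\times B}\twoheadrightarrow G_B$ is an isogeny precisely when $\dim G_{A\times B}=\dim G_B$; hence (b) is a verbatim reformulation of (d).

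It remains to transfer everything to the Hodge groups. Using $G_C/sG_C\iso\gp_m$ for $C\in\{A,B,A\times B\}$ we have $\dim G_C=\dim sG_C+1$ and $\rank G_C=\rank sG_C+1$, and substituting these identities converts (d) into (e) and (f) into (g). For the isogeny statement (c), recall from Section~\ref{SS:MT pair} that $H_{A,B}\subset sG_A$ and $H_{B,A}\subset sG_B$, and that, by Goursat's lemma, $G_{A\times B}$ (respectively $sG_{A\times B}$) is the preimage in $G_A\times G_B$ (respectively $sG_A\times sG_B$) of the graph of an isomorphism of quotients; consequently the kernel of $G_{A\times B}\to G_A$ equals $\{e\}\times H_{B,A}$, which is also the kernel of $sG_{A\times B}\to sG_A$, and symmetrically for the other projection. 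Therefore $G_{A\times B}\to G_A$ is an isogeny if and only if $sG_{A\times B}\to sG_A$ is, which yields (b) $\iff$ (c) and closes the cycle of equivalences. No step is genuinely hard; the only points requiring a little care are the descent of the Mumford--Tate hypothesis from $A\times B$ to its factors and the bookkeeping that the Goursat kernels for the Mumford--Tate and Hodge diagrams literally coincide — so that ``isogeny'' is detected in exactly the same subgroup — both of which are immediate from the constructions of Section~\ref{SS:MT pair}.
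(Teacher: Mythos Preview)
Your proposal is correct and follows exactly the approach the paper intends: the paper states this corollary without proof, leaving it as an immediate consequence of Theorem~\ref{T:A abs simp} applied to both ordered pairs $(A,B)$ and $(B,A)$, together with the bookkeeping in Lemma~\ref{Lem: dim_and_rank} and the observation from Section~\ref{SS:MT pair} that $H_{A,B}\subset sG_A$, $H_{B,A}\subset sG_B$. Your write-up supplies precisely those details, including the small point that the Mumford--Tate conjecture for $A\times B$ descends to each factor, which the paper takes for granted.
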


At the opposite extreme, we have:

\begin{cor}
\label{C:hodge group is product}
Suppose that $A$ and $B$ are two absolutely simple abelian varieties
over $K$, and that the Mumford--Tate conjecture holds for $A\times
B$.  If $sG_{A\times B} = sG_A \times sG_B$, then $A$ and $B$ are
mutually essentially torsion finite.
\end{cor}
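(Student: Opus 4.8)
The plan is to reduce to Theorem~\ref{T:A abs simp}, showing that the hypothesis $sG_{A\times B} = sG_A\times sG_B$ makes the Mumford--Tate group of the product strictly larger than that of either factor, so that the dimension condition (c) of that theorem fails for both $A$ and $B$. First I would record that the Mumford--Tate conjecture for $A\times B$ implies it for $A$ and for $B$ -- a standard consequence of the surjections $\mathcal G_{A\times B/K,\ell}^\circ\twoheadrightarrow\mathcal G_{A/K,\ell}^\circ$ and $\mt(A\times B)\twoheadrightarrow\mt(A)$, which together identify $\mathcal G_{A/K,\ell}^\circ$ with the image of $\mt(A\times B)_{\rat_\ell}$, namely $\mt(A)_{\rat_\ell}$ (and similarly for $B$) -- so that Theorem~\ref{T:A abs simp} applies to both $A$ and $B$.

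Next I would turn the hypothesis into a dimension count. Reading ``$sG_{A\times B} = sG_A\times sG_B$'' as the assertion that the canonical inclusion $\iota$ is an isomorphism onto the whole product, and using $G_C/sG_C\iso\gp_m$ for every abelian variety $C$, one obtains
\[
\dim G_{A\times B} = 1 + \dim sG_A + \dim sG_B = \dim G_A + \dim G_B - 1 .
\]
Since $A$ and $B$ are absolutely simple, $H_1(A,\rat)$ and $H_1(B,\rat)$ are irreducible representations of $\mt(A)$ and $\mt(B)$ (as noted in the proof of Theorem~\ref{T:A abs simp}) of dimension $\ge 2$, so $sG_A$ and $sG_B$ are nontrivial; hence $\dim G_A,\dim G_B\ge 2$ and
\[
\dim G_{A\times B} > \max\{\dim G_A,\dim G_B\} .
\]
In particular $\dim G_{A\times B}\neq\dim G_B$, so by the equivalence of (a) and (c) in Theorem~\ref{T:A abs simp}, $A$ is not potentially torsion infinite for $B$, i.e.\ $A$ is essentially torsion finite for $B$; by the symmetric argument $B$ is essentially torsion finite for $A$. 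As an alternative final step, one can instead read off from the Goursat description \eqref{E:special hodge goursat} that the hypothesis forces $H_{A,B}=sG_A$ and $H_{B,A}=sG_B$ (Remark~\ref{R:goursat product}), whence $\dim H_{A,B}=\dim sG_A>0$ and $\dim H_{B,A}>0$, and then invoke the equivalence of (a) and (b) of Theorem~\ref{T:A abs simp}.

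I do not anticipate a genuine obstacle here; this corollary is precisely the bookkeeping promised in the introduction just after \eqref{E:smt is product}. The only point requiring a line of care is that ``$sG_{A\times B} = sG_A\times sG_B$'' must be interpreted as an equality of subgroups of $sG_A\times sG_B$ under $\iota$ (equivalently, that $\iota$ is an isomorphism onto the full product), not merely as an abstract isomorphism; this is the standard reading -- consistent with how \eqref{E:smt is product} is used by Ichikawa and Lombardo -- and it is what makes both the dimension count and the Goursat argument valid.
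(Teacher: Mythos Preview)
Your argument is correct and matches the paper's approach: the paper does not supply a separate proof for this corollary, but the sketch given in the introduction just after \eqref{E:smt is product} is exactly your dimension count $\dim\mt(A\times B)=\dim\mt(A)+\dim\mt(B)-1>\max\{\dim\mt(A),\dim\mt(B)\}$ followed by an appeal to Theorem~\ref{T:A abs simp}. Your alternative via Remark~\ref{R:goursat product} (giving $H_{A,B}=sG_A$ directly) is an equally valid and perhaps cleaner route to the same conclusion.
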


As we noted in the introduction, Ichikawa \cite{Ichikawa-Hg-gp-AV} and Lombardo \cite{Lombardo-Hg-gp-pair-AV-2016} have given sufficient criteria for a pair of abelian varieties $(A,B)$ to satisfy $sG_{A\times B} = sG_A \times sG_B$; a typical example is when $A$ and $B$ are non-isogenous abelian varieties of odd dimension with absolute endomorphism ring $\integ$.

\section{Applications}\label{Sect: application}
  
In this section we apply Theorem \ref{T:A abs simp} to the following two classes of abelian varieties:
\begin{enumerate}
    \item[(a)] CM abelian varieties;
    \item[(b)] Abelian varieties with dimension smaller than 4.
\end{enumerate}
Since the Mumford--Tate groups, and in fact the Mumford--Tate conjecture, are known for these classes of varieties, we are able to obtain some unconditional results.

More precisely, in Section \ref{Sect: CM}, after a brief review of CM abelian varieties, we give a criterion (Theorem~\ref{Thm:
  main_thm_CM_pair}) to decide the essential torsion finiteness of a pair
of CM abelian varieties in terms of their CM-types. This criterion,
compared with Theorem~\ref{T:A abs simp}, has
the advantage of being effective. In particular,  one can use it to create
examples of non-isogenous but potentially torsion infinite
pairs. This is demonstrated in  Examples~\ref{Exa: torsion_infinite-7} and
\ref{Exa: torsion_infinite-11}. Moreover, since the Mumford--Tate group of a CM abelian variety is a torus -- indeed, this characterizes CM abelian varieties -- our theorem
indicates a way to describe the extra Hodge classes on the product of CM abelian varieties via relations among the characters of their Mumford--Tate groups.   We also explore the
relation between extra Hodge classes and torsion infiniteness in the CM
case; this is the main content of Section
\ref{Sect: extra cycles}.

In a different direction, thanks to the classification of
Mumford--Tate groups of low-dimension abelian varieties (see
\cite[\S2]{Moonen-Zarhin-Hg-cls-av-low-dim} for instance), one is able
to analyze the equivalent conditions in Theorem~\ref{T:A abs simp} for each pair of realizable
Mumford--Tate groups. The result of this study is  Theorem~\ref{Thm:
  torsion_finite_low_dim}. In particular, this theorem allows the
comparison of Type IV varieties, which is not covered by the result of
\cite{Lombardo-Hg-gp-pair-AV-2016}. Details are given in Section
\ref{Sect: low dim}.

\subsection{CM abelian varieties}
\label{Sect: CM}

\subsubsection{CM types and Mumford--Tate groups}

We start by briefly reviewing some background on CM abelian
varieties. See \cite{MilCM} and
\cite{Serre-Tate-good-red-AV} for more details.

Let $A$ be a $g$-dimensional abelian variety over a number field
$K$, and let $E$ be a CM-algebra of dimension $2g$. We say $A$ has
complex multiplication by $E$ if there exists an embedding of algebras $i:E\hookrightarrow \End^{0}(A)\coloneqq\End (A)\otimes \Q$. In this case, we call $A$ a CM abelian variety and say that $A$ has CM by $E$.  (If there is some finite extension $L/K$ such that $A_L$ has CM by $E$, we will say that $A$ has potential CM by $E$.)

The embedding $i$
induces an $E$-action on the Lie algebra $\Lie(A_{\C})$. The character
of this $E$-representation is given by $\sum_{\varphi\in \Phi}\varphi$
for some subset $\Phi\subset \Hom_{\Q} (E, \C)$. Let $c$ be the
complex conjugation on $\C$. Then
\begin{equation}\label{Eqn: half_set}
    \Phi\sqcup \Phi^{c}=\Hom_{\Q}(E, \C),
\end{equation}
where $\Phi^{c} =\{c\circ \varphi|\varphi\in \Phi\}$. The pair
$(E, \Phi)$ is called the \emph{CM-type} of $A$ (or $(A, i)$). On the other hand,
for any pair $(E, \Phi)$ where $E$ is a CM-algebra and the subset
$\Phi\subset \Hom_{\Q}(E, \C)$ satisfies \eqref{Eqn: half_set}, there
exists a CM abelian variety $A_0$ defined over a number field equipped with an action $i:E \hookrightarrow \End^0(A_0)$ with CM-type $(E, \Phi)$. The correspondence between CM-types and isogeny classes of abelian varieties with action by a CM algebra is well-understood; see, e.g., \cite[Proposition~3.12]{MilCM}.

Fix an embedding $K\hookrightarrow \cx$, and let $\bar K$ be the
algebraic closure of $K$ in $\cx$. Then any embedding
$E \hookrightarrow \cx$ factors through $\bar K$, and we have a
bijection $\Hom_{\Q}(E, \overline{K})\stackrel\sim\to\Hom_{\Q}(E,
\C)$. Let $\til{E}$ be a Galois extension of $\Q$ in $\overline{K}$
which splits $E$. Then the Galois group $\gal(\til{E}/\Q)$ acts on
$\Hom_{\Q}(E, \overline{K})$ by left composition. Let $H = H_{(E,\Phi)}$ be the group
$\{\sigma\in \gal(\til{E}/\Q)| \Phi^{\sigma}=\Phi\}$. The fixed field
$E^{\ast}:=\til{E}^{H}$ of $H$ is called the reflex field of the CM-type $(E, \Phi)$.

We now introduce the reflex norm
associated with the CM abelian variety $A$. Recall (\S \ref{Sect: Goursat's_lemma}) that for any finite extension
$F$ of $\Q$, we let $T^{F}:=\Res_{F/\rat}\mathbb{G}_{m}$ be the Weil restriction of the multiplicative group. We define
$\widetilde{\Phi}:=\{\widetilde{\varphi}\in \Hom_{\Q}(\widetilde{E},
\overline{K})\mid \widetilde{\varphi}|_{E}\in \Phi\}$. Since $\widetilde E/\Q$ is Galois, 
$\Hom_{\Q}(\widetilde{E}, \overline{K})$ is a torsor under 
$\Gal(\widetilde{E}/\Q)$, and the choice of embedding $\widetilde E
\hookrightarrow \bar K$ gives a bijection between these two sets. We
use this to define, for $\phi \in \Hom_{\Q}(\widetilde E,
\bar K)$, the (group-theoretic) inverse $\phi\inv \in
\Hom_{\Q}(\widetilde E, \bar K)$.

Let $\widetilde{\Phi}^{-1}$ be the set
$\{\widetilde{\varphi}^{-1}\mid \widetilde{\varphi}\in
\widetilde{\Phi}\}$. The map
$N_{\til{\Phi}^{-1}}: \til{E}^{\ast}\rightarrow \til{E}^{\ast}$ given
by $N_{\til{\Phi}^{-1}}(a)=\prod_{\sigma\in \til{\Phi}^{-1}}\sigma(a)$
defines a map of algebraic tori
$N_{\til{\Phi}^{-1}}: T^{\til{E}}\rightarrow T^{\til{E}}.$ This map
factors through $T^{E^{\ast}}$ and has image contained in $T^{E}$. More precisely,
we have a commutative diagram of $\Q$-tori
\[
\begin{tikzcd}
 T^{\til{E}}\ar[r, "N_{\til{\Phi}^{-1}}"]\ar[d, "N_{\til{E}/E^{\ast}}"'] & T^{\til{E}} \\
 T^{E^{\ast}} \ar[r, "N_{\Phi}"] & T^{E}\ar[u, hook]
\end{tikzcd},
\]
where $N_{\til{E}/E^{\ast}}$ is the usual norm map and $N_{\Phi}$ is
called the reflex norm of $(E, \Phi)$. For any finite extension
$L/E^{\ast}$, we define $N_{L, \Phi}=N_{\Phi}\circ N_{L/E^{\ast}}$. We
will call $N_{L, \Phi}$ the $L$-reflex norm of $(E, \Phi)$. Note that
$N_{L/E^{\ast}}$ is a surjective map. The image of the map
$N_{L,\Phi}: T^{L}\rightarrow T^{E}$  is independent of
$L$, and we denote it by $T_{\Phi}$.  See, e.g., \cite[Lem.~4.2]{Yu-MTCM} for an explicit calculation of $X^*(T_\Phi)$. 
 
Consider the special case where $E$ is a Galois extension of $\rat$.  Then $E$ contains the reflex field $E^*$, and $T_{\Phi}$, as the image of $T^E$ under $N_{E,\Phi}: T^E \to T^{E^*} \to T^E$, is a \emph{quotient} of $T^E$.  Inside 
\begin{align}
    X^*(T^E) &\iso \Z \ang{\sigma | \sigma \in \gal(E/\rat)}  \end{align}
we find that
\begin{align}    
\label{Eqn: char_of_T_Phi}
    X^{\ast}(T_{\Phi})\cong & \Z\langle \sum_{\varphi\in \Phi}(\sigma\circ \varphi^{-1})\mid \sigma\in \gal(E/\Q) \rangle\subset X^{\ast}(T^{E}).
\end{align}

\begin{lemma}\label{Lem: MTC_CM_Ab.var}
If $A$ is a CM abelian variety over a number field, then the Mumford--Tate conjecture holds for $A$.  Moreover, if $(E,\Phi)$ is a CM type for $A$, then $\mt(A) \cong T_\Phi$.
\end{lemma}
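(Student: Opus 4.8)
The plan is to prove the second assertion first (the identification of $\mt(A)$), since it is the key input for the Mumford--Tate conjecture, and to begin by reducing to a convenient situation. Both statements are insensitive to replacing $K$ by a finite extension: $\mt(A)$ depends only on the Hodge structure $H_1(A,\rat)$, while $\mathcal G^\circ_{A/L,\ell} = \mathcal G^\circ_{A/K,\ell}$ for $L/K$ finite, so (by Remark~\ref{Rmk: Mumford--Tate}) the conjecture holds for $A/K$ if and only if it holds for $A/L$. I would therefore enlarge $K$ so that $K = K^{\operatorname{conn}}$ (Lemma~\ref{Lem: K^conn}), so that $K$ contains a Galois splitting field $\widetilde E$ of $E$ --- whence $\Hom_\rat(E,\bar K) = \Hom_\rat(E,K)$ --- and so that $A$ already has CM by $E$ over $K$. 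Writing $A$ up to $K$-isogeny as a product of absolutely simple CM abelian varieties, using that the Mumford--Tate conjecture is preserved under products \cite{commelinproductmt}, and noting that the reflex norm of a product CM algebra is the product of the reflex norms (so that $T_\Phi$ for a disjoint union of CM types is the image of the combined map, which is the same combinatorial object as $\mt$ of the product), one reduces to the case that $A$ is simple with CM by a CM \emph{field} $E$ of degree $2\dim A$.

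For the identification, the embedding $i\colon E\hookrightarrow\End^0(A)$ makes $H_1(A,\rat)$ a rank-one $E$-module, so $\mt(A)$, which commutes with $i$, is a $\rat$-subtorus of $T^E$. Over $\cx$ the Hodge decomposition refines the eigenspace decomposition $H_1(A,\cx) = \bigoplus_{\varphi\in\Hom_\rat(E,\cx)}H_1(A,\cx)_\varphi$ into lines, with the part of $H_1(A,\cx)$ of Hodge type $(-1,0)$ equal to $\bigoplus_{\varphi\in\Phi}H_1(A,\cx)_\varphi$; reading off the cocharacter of the Hodge structure, one finds $\mu_h = \sum_{\varphi\in\Phi}\varphi$ inside $X_*(T^E_\cx) = \integ[\Hom_\rat(E,\cx)]$. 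Since $\mt(A)$ is the smallest $\rat$-subtorus of $T^E$ whose base change to $\cx$ contains the image of $\mu_h$, and its cocharacter lattice is stable under $\gal(\widetilde E/\rat)$, the group $\mt(A)$ is the subtorus cut out by the full Galois orbit of $\mu_h$. Dualizing, this coincides with the torus $T_\Phi$ whose character group is recorded in \eqref{Eqn: char_of_T_Phi} (and, for general $E$, in \cite[Lem.~4.2]{Yu-MTCM}); this is the classical computation of the Mumford--Tate group of a CM abelian variety (see \cite{MilCM}, \cite{Serre-Tate-good-red-AV}), and it gives $\mt(A)\cong T_\Phi$.

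For the Mumford--Tate conjecture it remains to show $\mathcal G^\circ_{A/K,\rat_\ell} = \mt(A)_{\rat_\ell} = T_{\Phi,\rat_\ell}$; since the inclusion $\mathcal G^\circ_{A/K,\rat_\ell}\subseteq\mt(A)_{\rat_\ell}$ always holds (as $K = K^{\operatorname{conn}}$), it suffices to check that $\Gamma_{A/K,\ell^\infty}$ is Zariski dense in $T_{\Phi,\rat_\ell}$. Here I would invoke the main theorem of complex multiplication (\cite{Serre-Tate-good-red-AV}, \cite{MilCM}): because $K$ contains the reflex field $E^*$ and $A$ has CM over $K$, the composite of the reciprocity map of $K$ with $\rho_{A/K,\ell^\infty}$ agrees, on an open subgroup of the finite idèles of $K$, with the $\ell$-adic realization of the reflex norm $N_{K,\Phi}\colon T^K\to T^E$; in particular $\Gamma_{A/K,\ell^\infty}\subseteq T_\Phi(\rat_\ell)$ and it contains $N_{K,\Phi}$ of an open subgroup of $\prod_{\lambda\mid\ell}K_\lambda^\times = T^K(\rat_\ell)$. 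As $N_{K,\Phi}\colon T^K\to T_\Phi$ is a surjection of $\rat_\ell$-tori, this image is open in $T_\Phi(\rat_\ell)$, hence Zariski dense in the connected group $T_{\Phi,\rat_\ell}$ (any proper closed subgroup is a lower-dimensional $\ell$-adic submanifold and so has empty interior). Thus $\mathcal G^\circ_{A/K,\rat_\ell} = T_{\Phi,\rat_\ell}$, which is the Mumford--Tate conjecture. (One may also simply cite \cite{Pohlmann} and \cite{Yu-MTCM}.)

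The main obstacle is the input from the main theorem of complex multiplication: everything else is formal once one knows that the $\ell$-adic Galois image is governed by the reflex norm, and in particular that it is Zariski dense in $T_\Phi$ rather than in some proper subtorus. A lesser technical point is the reduction to a CM field and the compatibility of $T_\Phi$ with products of CM algebras, but this is routine given the explicit description of $X^*(T_\Phi)$ used above.
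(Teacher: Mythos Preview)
Your proposal is correct. The paper's own proof is simply a citation to Pohlmann \cite[Thm.~5]{Pohlmann} and Yu \cite[Lemma~4.2]{Yu-MTCM}, with the remark that Yu's argument works verbatim in the non-simple case; your write-up is precisely a sketch of the argument underlying those references (identify $\mt(A)$ as the $\rat$-Zariski closure of the Hodge cocharacter inside $T^E$, then use the main theorem of CM from \cite{Serre-Tate-good-red-AV} to see that the $\ell$-adic image is open, hence Zariski dense, in $T_\Phi(\rat_\ell)$), so the approaches coincide. Your reduction to the simple case via \cite{commelinproductmt} is valid but unnecessary: as the paper notes, the reflex-norm computation and the Serre--Tate description go through directly for a CM algebra, so you could drop that step and shorten the argument.
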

\begin{proof}
This is due to Pohlmann \cite[Thm.~5]{Pohlmann}; see also \cite[Lemma~4.2]{Yu-MTCM} for a modern proof.  (In fact, while \cite{Yu-MTCM} focuses on simple abelian varieties, the proof given there works verbatim in the non-simple case, too.)
\end{proof}

\begin{remark}\label{R: replace Galois CM}
Note in particular that (the character group of) the torus $\mt(A)$ can be explicitly described using a CM-type $(E, \Phi)$ of $A$. It is usually more convenient to assume that $E$ is Galois over $\Q$; and in studying the essential torsion finiteness problem for CM abelian varieties, we can always do this. 
Indeed, if $E/\Q$ is not Galois, choose a CM Galois extension $E'/\Q$ such that $E\subseteq E'$, and let $n=[E':E]$. Then $A^n$ has a CM-type $(E', \Phi')$ where $\Phi'=\{\sigma\in \Gal(E'/\Q)\mid \sigma\mid_{E}\in \Phi  \}$.  Moreover,   $\mt(A)\cong \mt (A^n)$; and if $A$ is defined over a field $K$, then $K_A = K_{A^n}$ (Lemma \ref{L:tf for products}). Thus, for our purposes, we may restrict our attention to \emph{Galois} CM fields.
\end{remark}

\subsubsection{Galois representations}
\label{SS:CM galois rep}
For use in later examples, we recall the calculation of the Galois representation of a CM abelian variety following \cite[\S7]{Serre-Tate-good-red-AV} and \cite[\S3]{Yu-MTCM}.  Let $A/K$ be an abelian variety with complex multiplication by $E$.  Let $\ell$ be a rational prime which does not divide the index $[\mathcal O_E: \End(A)]$.  (This condition only rules out finitely many primes.  Alternatively, for our applications we may replace $K$ by a finite extension and adjust $A$ in its isogeny class, in which case we may assume that $\End(A) = \mathcal O_E$.)  Then $\mathcal O_{E_\ell} := \mathcal O_E \otimes \integ_\ell$ is a direct sum of discrete valuation rings, and the Tate module $T_\ell A$ is free of rank one over $\mathcal O_{E_\ell}$.  The Galois group of $K$ acts $E$-linearly on $T_\ell A$, and so 
the Galois representation $\rho_{A/K,\ell^\infty}$ of $\gal(K)$ factors through $\gal(K)^\ab$.  Composing the $\ell$-adic representation with the Artin reciprocity map (in the idelic formulation of class field theory), one obtains a continuous group homomorphism which we still denote by $\rho_{A/K,\ell^\infty}$: \[
\xymatrix{
\mathbb A^\times_K \ar[r]^-{\widetilde \rho_{A/K,\ell^\infty}} \ar[d]_{\operatorname{art}} & \mathcal O_{E_\ell}\units = T^E(\integ_\ell) \subset \GL(T_\ell A).\\
\gal(K)^{\operatorname{ab}} \ar[ur]_{\rho_{A/K,\ell^\infty}}&
}
\]
After possibly replacing $K$ with a finite extension, we now assume that $K$ contains $E^*$, the reflex field of the CM-type of $A$, so that the reflex norm $N_{K,\Phi}$ is defined (\S \ref{Sect: CM}).  
Then by \cite[Theorem~10, 11]{Serre-Tate-good-red-AV} we can concretely describe the representation $\widetilde\rho_{A/K,\ell^\infty}$ by
\begin{equation}
\label{Eqn: Serre-Tate}
\xymatrix{
\mathbb A_K\units \ar[r]^-{\widetilde\rho_{A/K,\ell^\infty}} & T^E(\integ_\ell) = \mathcal O_{E_\ell}\units , \\
a = (a_v)_v \ar@{|->}[r] & \varepsilon(a) N_{K,\Phi,\ell}(a_\ell\inv).
}
\end{equation}
Here $a_\ell=(a_v)_{v\mid \ell}$ denotes the component of $a$ in $K_{\ell}\units=\prod_{v\mid \ell}K_v\units$; and the map 
\[
\xymatrix{
N_{K, \Phi, \ell}= N_{K, \Phi} \times \Q_{\ell}: K_{\ell}^{\times}=(K\otimes \Q_{\ell})^{\times}\ar[r] &  E_{\ell}^{\times}=(E\otimes \Q_{\ell}})^{\times}
\]
is induced by the reflex norm map from $T^K$ to $T^E$; and 
\[\xymatrix{
\varepsilon:  \mathbb{A}_K^{\times} \ar[r] & E^{\times}}
\]
is the unique homomorphism satisfying the following conditions: 
\begin{enumerate}[label=(\alph*)]
    \item The restriction of $\varepsilon$ to $K^{\times}$ is the reflex norm map $N_{K, \Phi}: K^{\times }\to E^{\times}$.
    \item The homomorphism $\varepsilon$ is continuous, in the sense that its kernel is open in $ \mathbb{A}_K^{\times}$.
    \item There is a finite set $S$ of places of $K$, including the infinite ones and those where $A$ has bad reduction, such that 
    \[
    \varepsilon(a)=\prod_{v\notin S} \pi_{v}^{\nu(a_v)} \quad \text{ for all }a=(a_v) \text{ with }a_v=1 \text{ when }v\in S
    \]
    where each $\pi_v$ is the Frobenius element attached to $v$ \cite[p.511]{Serre-Tate-good-red-AV}. 
\end{enumerate}

\subsubsection{Nondegenerate abelian varieties}
\label{S:nondegenerate}
 
Let $A/K$ be an abelian variety with CM type $(E,\Phi)$.  Recall that $A$ is called nondegenerate if $\dim \mt(A)$ is maximal, i.e., $\dim \mt(A) = \dim A+1$. Recall that $\mathcal G_{A/K,\rat_\ell} $ is the Zariski closure of the $\ell$-adic representation image (c.f. \S~\ref{Sect: connectedness}).

\begin{lemma}
\label{L:MT nondegenerate}
    Let $A/K$ be an abelian variety with nondegenerate CM type $(E,\Phi)$.  Then for each $\ell$,  $\mathcal G_{A/K,\rat_\ell} = \mt(A)\times_\rat \rat_\ell$, i.e., $K = K^{\text{conn},A}$.
\end{lemma}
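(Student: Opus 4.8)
The plan is to sandwich $\mathcal G_{A/K,\rat_\ell}$ between its identity component (which the Mumford--Tate conjecture identifies with $\mt(A)_{\rat_\ell}$) and the torus $T^E\cap\GSp$, and to observe that the nondegeneracy hypothesis forces these two outer groups to coincide, leaving no room for a nontrivial component group.

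First I would set up the two containments. Write $V=H^1(A,\rat)$ (equivalently $H_1$), $g=\dim A$, let $c$ denote the complex conjugation of the CM-algebra $E$, and fix a polarization of $A$ whose Rosati involution stabilises $E$ and restricts to $c$ there (automatic when $A$ is simple, and available in general; see \cite{MilCM}); let $\psi$ be the associated symplectic form on $V$. Since $E\hookrightarrow\End^0_K(A)$, the Galois action on $T_\ell A$ commutes with an order of $E$, hence with $E\otimes\rat_\ell$ on $V\otimes\rat_\ell$; as the latter is free of rank one over $E\otimes\rat_\ell$, the centralizer of $E\otimes\rat_\ell$ in $\GL_{V\otimes\rat_\ell}$ is $T^E_{\rat_\ell}$, so $\mathcal G_{A/K,\rat_\ell}\subseteq T^E_{\rat_\ell}$. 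The Weil pairing is Galois-equivariant up to the cyclotomic character, so also $\mathcal G_{A/K,\rat_\ell}\subseteq \GSp(V,\psi)_{\rat_\ell}$. Hence $\mathcal G_{A/K,\rat_\ell}\subseteq T'_{\rat_\ell}$, where $T':=T^E\cap\GSp(V,\psi)$.

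Next I would compute $T'$ and bring in nondegeneracy. An element of $T^E$, acting by multiplication by a unit $u$ of $E\otimes\bar\rat$, lies in $\GSp(V,\psi)$ exactly when $u\,c(u)\in\G_m$ (this is the adjointness identity $\psi(uv,uw)=\psi(v,uc(u)w)$); thus $T'$ is the preimage of the scalar subtorus $\G_m\hookrightarrow T^{E_0}$ under the norm $N_{E/E_0}\colon T^E\to T^{E_0}$, where $E_0=E^{c}$ is the maximal totally real subalgebra. The kernel of $N_{E/E_0}$ is $\{(x_\varphi)\colon x_\varphi x_{c\varphi}=1\}$, a connected torus of dimension $g$, so $T'$ is a connected torus of dimension $g+1$. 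On the other hand, the Hodge structure on $V$ is $E$-linear and polarized by $\psi$, so $\mt(A)\subseteq T^E\cap\GSp(V,\psi)=T'$. Now nondegeneracy enters: by hypothesis $\dim\mt(A)=g+1=\dim T'$, and $\mt(A)$ is connected, so $\mt(A)=T'$. Finally, the Mumford--Tate conjecture holds for $A$ (Lemma~\ref{Lem: MTC_CM_Ab.var}), so $\mathcal G^\circ_{A/K,\rat_\ell}=\mt(A)_{\rat_\ell}$ for every $\ell$; combining
\[
\mt(A)_{\rat_\ell}=\mathcal G^\circ_{A/K,\rat_\ell}\subseteq \mathcal G_{A/K,\rat_\ell}\subseteq T'_{\rat_\ell}=\mt(A)_{\rat_\ell}
\]
forces $\mathcal G_{A/K,\rat_\ell}=\mt(A)\times_\rat\rat_\ell$, which is connected; as this holds for all $\ell$, we get $K=K^{\mathrm{conn},A}$.

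\textbf{Main obstacle.} The structural step $T^E\cap\GSp(V,\psi)=T'$ together with the dimension count $\dim T'=g+1$ is the delicate part: one must choose the polarization so that the Rosati involution restricts to $c$ on $E$, check connectedness of $\ker(N_{E/E_0})$, and use the standard fact that a polarized $E$-linear Hodge structure has Mumford--Tate group inside $T'$. Once this is in place, nondegeneracy does the real work by pinning $\mt(A)$ to the full torus $T'\supseteq\mathcal G_{A/K,\rat_\ell}$. (An alternative route runs through the explicit Serre--Tate description \eqref{Eqn: Serre-Tate} of the Galois representation, using that $N_{E/E_0}$ of a Frobenius $\pi_v$ equals the residue cardinality $q_v\in\rat^\times$; but the argument above avoids that computation.)
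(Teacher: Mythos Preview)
Your proposal is correct and follows essentially the same approach as the paper: sandwich $\mathcal G_{A/K,\rat_\ell}$ between $\mt(A)_{\rat_\ell}$ (via the Mumford--Tate conjecture for CM abelian varieties) and the torus $T^E\cap\GSp$, then use nondegeneracy to force equality. Your version is more explicit than the paper's in computing $T'=T^E\cap\GSp$ via the norm map and checking its connectedness and dimension, whereas the paper simply asserts that this intersection is (the $\integ_\ell$-points of) a maximal torus in $\GSp_{2g}$; the paper also begins by reducing to a single $\ell$ via Lemma~\ref{Lem: K^conn}, a step your uniform argument renders unnecessary.
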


\begin{proof}
By Lemma \ref{Lem: K^conn}, it suffices to prove the statement for a single $\ell$, and so we assume that $\ell \nmid [\mathcal O_E: \End(A)]$.
    As we have seen above, $T_\ell A$ admits commuting actions by $\mathcal O_{E_\ell}$ and $\gal(K)$, and thus the $\ell$-adic representation $\gal(K) \to \aut(T_\ell A)$ factors through $\mathcal O_{E_\ell}\units$.  A choice of $K$-rational polarization on $A$ induces a symplectic form $\psi$ on $T_\ell A$, which is also preserved by $\gal(K)$ up to a scaling.  Thus, the image of $\gal(K)$ in $\aut(T_\ell A)$ is contained in $\mathcal O_{E_\ell}\units \cap \operatorname{GSp}(T_\ell A, \psi)$.  Since these are the $\integ_\ell$-points of a maximal torus in $\operatorname{GSp}_{2\dim A}$ -- indeed, both $\rank \operatorname{GSp}_{2\dim A}$ and $\dim \mt(A)$ are $1+\dim A$ --  the result follows.
\end{proof}

\subsubsection{Torsion finiteness}

With this preparation, we can now use Theorem \ref{T:A abs simp} to characterize the essential torsion finiteness of CM abelian varieties in terms of CM types.

Recall that an abelian variety $A$ over a field $K$ is called \emph{isotypic} if it is isogenous to a power of a simple abelian variety over the same field $K$, i.e., up to isogeny, $A$ has a unique simple factor \cite[Defn. 1.2.5.2]{chai-conrad-oort}. Any CM abelian variety is isogenous to a product of isotypic CM abelian varieties \cite[Prop.~1.3.2.1]{chai-conrad-oort}, and an isotypic CM abelian variety is geometrically isotypic \cite[Cor.~1.3.7.2]{chai-conrad-oort}. 
 
To state our next result, it is more convenient to name our abelian varieties $A_1$ and $A_2$.    In this case we will change notation slightly and write, for example, $G_1$ and $H_{12}$ for $G_{A_1}$ and $H_{A_1,A_2}$.

\begin{thm}
\label{Thm: main_thm_CM_pair}
    Let $A_1$ and $A_2$ be two isotypic  abelian varieties over a number field $K$, with $A_i$ of potential CM-type $(E_i, \Phi_i)$.  Let $T_i = T_{\Phi_i} = \mt(A_i)$ be the Mumford--Tate group of $A_i$, and let $T_{12} = \mt(A_1\times A_2)$.  Use the surjection $T_{12} \twoheadrightarrow T_i$ to identify $X^*(T_i)$ with a submodule of $X^*(T_{12})$.  Then either:
    \begin{enumerate}[label=(\alph*)]
    \item $X^*(T_1)\otimes \rat \subseteq X^*(T_2)\otimes \rat$.  Then $A_1$ is potentially torsion infinite for $A_2$.  For each $\ell$, there exists a finite extension $\widetilde K_\ell/K$ such that 
    \[
    A_1[\ell^\infty]((\widetilde K_\ell)_{A_2,\ell^\infty}) = A_{1,\ell^\infty}.
    \]
    Moreover,  if $X^*(T_1)\subseteq X^*(T_2)$, and if $A_1$ is simple and nondegenerate, then      \[
    A_1(K_{A_2})_{\tors} = A_1(\bar K)_{\tors}.
    \]
    \item $X^*(T_1)\otimes\rat\not\subseteq X^*(T_2)\otimes\rat$.  Then $A_1$ is essentially torsion finite for $A_2$.
    \end{enumerate}
\end{thm}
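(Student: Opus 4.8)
The plan is to reduce the entire statement to Theorem~\ref{T:A abs simp}, translating its group-theoretic criterion ``$A_1$ is potentially torsion infinite for $A_2$ iff $\dim H_{12}=0$'' into the character-theoretic condition on $T_1,T_2$, using that in the CM case all three Mumford--Tate groups are tori.

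First I would assemble the standing inputs. Since $A_1$ and $A_2$ have potential CM, the Mumford--Tate conjecture holds for each, hence for $A_1\times A_2$ (Lemma~\ref{Lem: MTC_CM_Ab.var}); and since $A_1$ is isotypic it is geometrically isotypic \cite[Cor.~1.3.7.2]{chai-conrad-oort}, so Theorem~\ref{T:A abs simp} applies to $A_1$ by way of Remark~\ref{R:A isotypic}. As recalled in \S\ref{SS:MT pair}, $T_1,T_2,T_{12}$ are tori fitting into the Goursat diagram \eqref{D:MT CM goursat}, so $X^*(T_{12})=X^*(T_1)+X^*(T_2)$ inside $X^*(T_{12})$, and \eqref{E:HAB for tori reprise} gives an isomorphism of $\integ$-modules
\[
X^*(H_{12})\;\iso\;\frac{X^*(T_1)}{X^*(T_1)\cap X^*(T_2)}.
\]
Comparing $\integ$-ranks, $\dim H_{12}=\dim_\rat X^*(T_1)_\rat-\dim_\rat\big(X^*(T_1)_\rat\cap X^*(T_2)_\rat\big)$, so $\dim H_{12}=0$ exactly when $X^*(T_1)_\rat\subseteq X^*(T_2)_\rat$. (If convenient I would first replace the $E_i$ by a common Galois CM field as in Remark~\ref{R: replace Galois CM}, which changes neither $\mt$, the torsion fields, nor torsion-finiteness, and makes the $X^*(T_i)$ explicit via \eqref{Eqn: char_of_T_Phi}.)

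With this in hand the dichotomy is immediate: by Theorem~\ref{T:A abs simp}, $A_1$ is potentially torsion infinite for $A_2$ if and only if $\dim H_{12}=0$, i.e.\ if and only if $X^*(T_1)_\rat\subseteq X^*(T_2)_\rat$. This gives case~(b) (failure of the containment means $A_1$ is \emph{not} potentially torsion infinite for $A_2$, hence essentially torsion finite, by Definition~\ref{Defn: ess_tor_fini}) and the first assertion of~(a). For the $\ell$-adic refinement in~(a), $\dim H_{12}=0$ makes $H_{12}$ a finite group scheme, so after a finite base extension arranging that $A_1\times A_2$ has connected, independent Galois representations (Lemmas~\ref{Lem: Serre_ind_repn},~\ref{Lem: K^conn}) the group $\gal\big(K_{A_1,\ell^\infty}K_{A_2,\ell^\infty}/K_{A_2,\ell^\infty}\big)\iso\mathtt H_{A_1,A_2,\ell^\infty}\subseteq H_{12}(\integ_\ell)$ is finite; as this compositum is a finite extension of $K_{A_2,\ell^\infty}$ inside $\bar K$, it equals $(\widetilde K_\ell)_{A_2,\ell^\infty}$ for a suitable finite $\widetilde K_\ell/K$ (obtained by adjoining a finite generating set of it over $K_{A_2,\ell^\infty}$), whence $K_{A_1,\ell^\infty}\subseteq(\widetilde K_\ell)_{A_2,\ell^\infty}$ and $A_1[\ell^\infty]\big((\widetilde K_\ell)_{A_2,\ell^\infty}\big)=A_{1,\ell^\infty}$. (One could instead cite Theorem~\ref{T:A abs simp}(e) and Lemma~\ref{L:newmainHabarbitrary}.)

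It remains to treat the ``moreover'' of~(a). If $X^*(T_1)\subseteq X^*(T_2)$ \emph{integrally}, then $X^*(H_{12})=0$, so $H_{12}$ is the trivial group scheme and the Goursat surjection $\pi_2\circ\iota\colon T_{12}\to T_2$ is an isomorphism of $\integ$-group schemes. Here it suffices to show $K_{A_1,\ell^\infty}\subseteq K_{A_2,\ell^\infty}$ for every $\ell$, since then $A_1[\ell^\infty](K_{A_2})\supseteq A_1[\ell^\infty](K_{A_2,\ell^\infty})=A_{1,\ell^\infty}$ for all $\ell$ and hence $A_1(K_{A_2})_{\tors}=A_1(\bar K)_{\tors}$. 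Assuming moreover that $A_1$ is simple and nondegenerate, Lemma~\ref{L:MT nondegenerate} gives $K=K^{\text{conn},A_1}$, so the $\ell$-adic image $\Gamma_{A_1/K,\ell^\infty}$ lies in $T_1(\integ_\ell)$ and is given by the Serre--Tate formula \eqref{Eqn: Serre-Tate}; using the isomorphism $T_{12}\iso T_2$ I would then check that $\rho_{A_1/K,\ell^\infty}$ factors through $\rho_{A_2/K,\ell^\infty}$ via the induced surjection $T_2\twoheadrightarrow T_1$, so that $\gal(K_{A_2,\ell^\infty})$ acts trivially on $T_\ell A_1$. I expect the hard part to be precisely this last verification \emph{over $K$ itself}, with no finite base extension: one must use nondegeneracy of $A_1$ — equivalently $K=K^{\text{conn},A_1}$, which kills the component group that would otherwise obstruct a direct comparison of $\rho_{A_1}$ and $\rho_{A_2}$ — together with the exact shape of the reflex-norm description of the CM Galois representations. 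Everything else is a direct repackaging of Theorem~\ref{T:A abs simp} through the torus/character-group dictionary.
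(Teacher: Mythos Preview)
Your reduction of the dichotomy (a)/(b) to Theorem~\ref{T:A abs simp} via the character-group computation \eqref{E:HAB for tori reprise} is exactly the paper's argument, and your handling of the $\ell$-adic refinement is equivalent to the paper's citation of Lemma~\ref{L:newmainHabarbitrary}.

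There is, however, a real gap in your treatment of the ``moreover'' clause. The theorem only assumes that $A_1$ has \emph{potential} CM over $K$; you invoke Lemma~\ref{L:MT nondegenerate} and the Serre--Tate formula \eqref{Eqn: Serre-Tate} as if the CM action and the reflex field $E_1^*$ were already available over $K$, but neither is guaranteed. The paper supplies the missing reduction: from the integral containment $X^*(T_1)\subseteq X^*(T_2)$ one gets a surjection $T_2\twoheadrightarrow T_1$, hence an inclusion of splitting (= reflex) fields $E_1^*\subseteq E_2^*$. For $N\ge 3$ the field $K_{A_2,N}$ contains $E_2^*$ (since all geometric endomorphisms of $A_2$ are defined there), hence $E_1^*$; and because $A_1$ is simple, this forces all geometric endomorphisms of $A_1$ to be defined over $K_{A_2,N}$ as well. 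Only then can one apply Lemma~\ref{L:MT nondegenerate} to $A_1$ over $K_{A_2,N}$ and conclude that $\gal(K_{A_2,N})$ acts on $A_{1,N}$ through a subgroup of $T_1(\integ/N)$, from which triviality follows via $H_{12}=1$.

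Your instinct that the factorization $T_2\twoheadrightarrow T_1$ should make $\rho_{A_1}$ factor through $\rho_{A_2}$ is morally the same endpoint, but you need this reflex-field argument first to put yourself over a base where the comparison is even formulated; the paper's route through $K_{A_2,N}$ is the cleanest way to do that without any base extension of $K$ itself.
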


\begin{proof}
By Theorem \ref{T:A abs simp} and Remark \ref{R:A isotypic}, $A_1$ is potentially torsion infinite for $A_2$ if and only if $\dim H_{12} = 0$.  Since $H_{12}$ is of multiplicative type, this happens if and only if $\dim_{\Q} X^*(H_{12})\otimes \Q = 0$.  After tensoring both sides of \eqref{E:HAB for tori reprise} with $\Q$, we find that this happens if and only if $X^*(T_1)\otimes \Q \subseteq X^*(T_2)\otimes\Q$.

If $A_1$ is potentially torsion infinite for $A_2$, then the description of $\widetilde K_\ell$, etc. is in Lemma \ref{L:newmainHabarbitrary}.  

Finally, suppose we have an inclusion of integral lattices $X^*(T_1)\subseteq X^*(T_2)$ and that $A_1$ is simple and nondegenerate.  The calculation \eqref{E:HAB for tori reprise} shows that $H_{12}$ is trivial.  Briefly suppose that $A_1/K$ has CM actually defined over $K$, and thus (Lemma \ref{L:MT nondegenerate}) has connected Galois representations.  Then for each natural number $N$ we have a containment $\Gamma_{A_1,N} \subset T_1(\integ/N)$, and thus $\gal(K_{A_2})$ acts trivially on $A_{1,N}$.

Now suppose that $A_1/K$ merely has potential complex multiplication.  The surjection $T_2 \to T_1$ means that the splitting field of $T_2$ contains the splitting field of $T_1$; equivalently, we have an inclusion of reflex fields $E_1^* \subset E_2^*$.  Suppose $N\ge 3$ is an integer.  Then all geometric endomorphisms of $A_2$ are defined over $K_{A_2,N}$ \cite{silverberg-FoD}.  Therefore $K_{A_2,N}$  contains $E_2^*$, and thus $E_1^*$ \cite[Prop.~7.11]{MilCM}.  Because $E_1^*$ is simple, all geometric endomorphisms of $A_1$ are defined over $K_{A_2,N}$.  Therefore, the image of the action of  $\gal(K_{A_2,N})$ on $A_{1,N}$, and \emph{a fortiori} that of $\gal(K_{A_2})$, is contained in $T_1(\integ/N)$ (Lemma \ref{L:MT nondegenerate}), and we conclude as before.

     \end{proof}

\begin{remark}
\label{R:CM common field}
    In the context of Theorem \ref{Thm: main_thm_CM_pair},  let $E/\rat$ be a Galois CM field containing $E_1$ and $E_2$, and assume that $A_1$ and $A_2$ share no common geometric isogeny factor.  As in Remark \ref{R: replace Galois CM}, after replacing $A_1$ and $A_2$ by suitable powers, we may assume $A_1$ and $A_2$ have CM by the same field $E$, with respective CM types $\Phi_{E,1}$ and $\Phi_{E,2}$; then $A_1\times A_2$ has a CM type $(E\times E, \Phi_{12})$, where $\Phi_{12} = \Phi_{E,1}\sqcup \Phi_{E,2}$.  Then $T_{\Phi_i} = T_{E,\Phi_i}$, and
the compatibility of the various (reflex) norm maps is expressed in the commutativity of the following diagram of tori:
\[
\begin{tikzcd}
      &                    & T_{\Phi_1}     \\
T^{E}\ar[urr, bend left=15, two heads, "N_{E, \Phi_1}"]\ar[drr, bend right=15, two heads, "N_{E, \Phi_2}"']\ar[r, two heads, "N_{E, \Phi_{12}}"] & T_{\Phi_{12}}\ar[ur, two heads, "\pi_1"']\ar[dr, two heads, "\pi_2"] &    \\
      &                    & T_{\Phi_2}   \\
\end{tikzcd}.
\]
In particular, in Theorem \ref{Thm: main_thm_CM_pair}, we may compare $X^*(T_{\Phi_1})$ and $X^*(T_{\Phi_2})$ inside $X^*(T^E)$ (or $X^{\ast}(T^{E})\otimes \Q$).
\end{remark}

\subsubsection{Examples}

In concrete cases, Theorem \ref{Thm: main_thm_CM_pair} gives a way to explicitly analyze essential torsion finiteness for pairs of CM abelian varieties.

\begin{exa}\label{eg: zeta_13}
Let $E = \Q(\zeta_{13})$. Then $\Gal(E/\Q)\cong\langle \sigma|
\sigma^{12}=1\rangle$. There are exactly six isomorphism classes of CM-types for $E$, with representatives
    \begin{align*}
\Phi_1&=\{1, \sigma, \sigma^2,
\sigma^3, \sigma^4, \sigma^5\}, &
\Phi_4 &= \{ 1, \sigma^7, \sigma^8, \sigma^3, \sigma^4, \sigma^5\},\\
\Phi_2&=\{1, \sigma^7, \sigma^2,
\sigma^3, \sigma^4, \sigma^5\},
& \Phi_5 &= \{1, \sigma^7, \sigma^8, \sigma^3, \sigma^{10}, \sigma^5\}, \\
\Phi_3&=\{1, \sigma, \sigma^8,
\sigma^3, \sigma^4, \sigma^5\},&
\Phi_6&=\{1, \sigma^4, \sigma^8,
\sigma, \sigma^5, \sigma^9\}.
\end{align*}
Let $A_i$ be an abelian sixfold with CM-type $(E,\Phi_i)$.  Then any abelian variety with CM by $E$ is
geometrically isogenous to one of the $A_i$; and for $1 \le i \le 5$,
$A_i$ is geometrically simple.
By an explicit computation, one can check that
in $X^{\ast}(T^{E})\otimes \Q$ we have  
\begin{align*}
    X^{\ast}(T_{\Phi_{i}})\otimes \Q & =X^{\ast}(T_{\Phi_{j}})\otimes
                                       \Q, \quad 1 \le i,j\leq 5 \text{, and }\\
    X^{\ast}(T_{\Phi_{6}})\otimes \Q& \subsetneq X^{\ast}(T_{\Phi_{i}})\otimes \Q, \quad 1\leq i\leq 5.
\end{align*}
Then for any $i
\leq 5$, $A_i$ is essentially torsion finite for $A_6$; and for any
$j \le 6$, $A_j$ is potentially torsion infinite for $A_i$.

We also can compute $X^{\ast}(H_{ij})$ explicitly. 
For example, consider $A_1$ and $A_2$. By \eqref{Eqn: char_of_T_Phi}, $X^{\ast}(T_{\Phi_1})$ is generated by the Galois orbit of $ 1+\sigma^{-1}+\sigma^{-2}+\sigma^{-3}+\sigma^{-4}+\sigma^{-5}$ and $X^{\ast}(T_{\Phi_2})$ is generated by the Galois orbit of $1+\sigma^{-7}+\sigma^{-2}+\sigma^{-3}+\sigma^{-4}+\sigma^{-5}$. Note that the Galois orbit of $ 1+\sigma^{-1}+\sigma^{-2}+\sigma^{-3}+\sigma^{-4}+\sigma^{-5}$ (resp. $1+\sigma^{-7}+\sigma^{-2}+\sigma^{-3}+\sigma^{-4}+\sigma^{-5}$) equals the Galois orbit of  $1+\sigma^{1}+\sigma^{2}+\sigma^{3}+\sigma^{4}+\sigma^{5}$ (resp. $1+\sigma^{7}+\sigma^{2}+\sigma^{3}+\sigma^{4}+\sigma^{5}$). We compute
\begin{equation}
\label{E:sample CM torus calc}
\begin{split}
1+\sigma^7+\sigma^2+\sigma^3+\sigma^4+\sigma^5=&(1+\sigma+\sigma^2+\sigma^3+\sigma^4+\sigma^5)-(\sigma+\sigma^2+\sigma^3+\sigma^4+\sigma^5+\sigma^6)\\ &+(\sigma^2+\sigma^3+\sigma^4+\sigma^5+\sigma^6+\sigma^7) \\
=& (1+\sigma+\sigma^2+\sigma^3+\sigma^4+\sigma^5)-\sigma (1+\sigma+\sigma^2+\sigma^3+\sigma^4+\sigma^5) \\ &+ \sigma^2(1+\sigma+\sigma^2+\sigma^3+\sigma^4+\sigma^5).
\end{split}
\end{equation}
Then $X^{\ast}(T_{\Phi_{2}})\subseteq X^{\ast}(T_{\Phi_{1}})$. So, by \eqref{E:HAB for tori reprise} \[
X^{\ast}(H_{21})\cong \frac{X^{\ast}(T_{\Phi_2})}{X^{\ast}(T_{\Phi_2})\cap X^{\ast}(T_{\Phi_1})}\cong \frac{X^{\ast}(T_{\Phi_2})}{X^{\ast}(T_{\Phi_2})}\cong 0.
\]
The $A_1$ and $A_2$ are primitive. Then $X^{\ast}(T_{\Phi_1})$ is a rank 7 free $\Z$-module with a basis $\{\sigma^{i} + \sigma^{i+1} + \sigma^{i+2} + \sigma^{i+3} + \sigma^{i+4} +\sigma^{i+5}\mid 0\leq i \leq 6\}$ and the $X^{\ast}(T_{\Phi_2})$ is a rank 7 free $\Z$-module with a basis $\{\sigma^{i} + \sigma^{i+7} + \sigma^{i+2} + \sigma^{i+3} + \sigma^{i+4} +\sigma^{i+5}\mid 0\leq i \leq 6\}$. A more detailed linear algebra calculation shows that 
\[
X^{\ast}(H_{12})\cong \frac{X^{\ast}(T_{\Phi_1})}{X^{\ast}(T_{\Phi_2})\cap X^{\ast}(T_{\Phi_1})}\cong \frac{X^{\ast}(T_{\Phi_1})}{X^{\ast}(T_{\Phi_2})}\cong (\Z/2)^{2}.
\]
\end{exa}

In Lemma \ref{L:newmainHabarbitrary}, one might hope that $\widetilde{K}_{\ell}$ could be chosen independently of $\ell$. To end this section, we will explain this is impossible in general by considering the following example.

\begin{exa}\label{Exa: torsion_infinite-11} 

    Let $E$ be $\Q(\zeta_{11})$. Then $\Gal(E/\Q)\cong\langle \sigma| \sigma^{10}=1\rangle$. There are exactly four isomorphism classes of CM-type for $E$, with representatives
    \begin{align*}         
\Phi_1&=\{1, \sigma^2, \sigma^4,
\sigma^6, \sigma^8\}, &
\Phi_3 &= \{1, \sigma^3, \sigma^6, \sigma^9, \sigma^2\},\\
\Phi_2 &= \{ 1, \sigma^6, \sigma^2, \sigma^3, \sigma^4\},
& \Phi_4&=\{1, \sigma, \sigma^2,
\sigma^3, \sigma^4\}. \\
\end{align*}
Let $K$ be a number field containing $E$ (and, in particular, the reflex fields of each $\Phi_i$), and let $A_{i}/K$ be an abelian fivefold with CM-type $(E, \Phi_{i})$. Further assume that each $A_i$ has independent representations over $K$, and that each $A_i$ has everywhere good reduction.  Let $S$ be a sufficiently large finite set of primes of $K$ so that the description of the Galois representations in \S \ref{SS:CM galois rep} holds.    Then any abelian variety with CM by $E$ is geometrically isogenous to one of the $A_i$.  For $2\leq i \leq 4$, $A_i$ is geometrically simple, while $A_1$ is geometrically isogenous to the cube of an elliptic curve with complex multiplication by $\rat(\sqrt{-11})$. By an explicit computation, one can check that in $X^{\ast}(T^{E})\otimes \Q$ we have
\begin{align*}
    X^{\ast}(T_{\Phi_{i}})\otimes \Q & =X^{\ast}(T_{\Phi_{j}})\otimes \Q, \quad 2 \le i,j\leq 4 \text{, and }\\
    X^{\ast}(T_{\Phi_{1}})\otimes \Q& \subsetneq X^{\ast}(T_{\Phi_{i}})\otimes \Q, \quad 2\leq i \leq 4.
\end{align*}
For any $i\geq 2$, $A_i$ is essentially torsion finite 
 for $A_1$; and for any $j\leq 4$, $A_{j}$ is potentially torsion infinite for $A_{i}$.
     
Now we focus on $A_1$ and $A_2$. 
 
Identifying $X^*(T^E)$ with the group ring $\integ[\ang \sigma]$, we may present $X^*(T_{\Phi_1})$ as $X^*(T_{\Phi_1}) \iso \integ\oplus \integ$, with basis elements $\sum_{0 \le j \le 5} \sigma^{2j}$ and $\sum_{0 \le j \le 5} \sigma^{2j+1}$.  The action of the generator $\sigma$ of $\gal(E/\rat)$ is to exchange these two basis vectors.  In particular, the torus is split by the fixed field $\rat(\zeta_{11})^{\sigma^5} = \rat(\sqrt{-11})$.

An explicit computation shows that
\[
X^*(T_{\Phi_1})\cap X^*(T_{\Phi_2}) = 3 X^*(T_{\Phi_1}).
\]
In particular, $X^*(H_{12}) = X^*(T_{\Phi_1})/3 X^*(T_{\Phi_1})$.  Thus ${H_{12}}_{\bar\rat} \iso (\integ/3)_{\bar\rat} \oplus (\integ/3)_{\bar\rat}$, and the action of $\gal(\rat(\sqrt{-11})/\rat)$ exchanges the two components.  

On one hand, $H_{12}$ is zero-dimensional.  Consequently, by Theorem \ref{T:A abs simp} (and the following remark), $A_1$ is potentially torsion infinite for $A_2$.

On the other hand, $H_{12}$ is not split over $\rat$, although it does admit $\rat$-points.  Indeed, $H_{12}(\rat) = \set{(0,0),(1,1),(2,2)} \subsetneq (\integ/3)\oplus (\integ/3)$.  Consequently, Lemma \ref{L:newmainHabarbitrary} only implies that, for each $\ell$, there exists some finite extension $\widetilde K_{2,\ell^\infty}$ of $K_{2,\ell^\infty}$ such that $A_1[\ell^\infty](\widetilde K_{2,\ell^\infty})$ is infinite.   We will now use the explicit calculation of the action of Galois to show that for any finite extension $L/K$, $A_1[\ell^\infty](L_{2,\ell^\infty})$ is finite for all but finitely many primes $\ell$.  This shows  that Lemma \ref{L:newmainHabarbitrary} is essentially optimal.

Let $\rho_{1 , \ell}$ and $\rho_{2, \ell}$ denote $\rho_{A_1/K,\ell^\infty}$ and $\rho_{A_2/K,\ell^\infty}$, respectively; and let $\widetilde \rho_{i,\ell}$ be the pullback of $\rho_{i,\ell}$ to $\mathbb A_K\units$.

Now suppose $\ell\not \in S$, 
 and embed $K_\ell^{\times}=\prod_{v\mid \ell} K_{v}^{\times}$ naturally into the $\ell$-adic component of $ \mathbb{A}_K^{\times}$.  Then the restriction of the $\ell$-adic representations \eqref{Eqn: Serre-Tate} to $\OO_{K_{\ell}}^{\times}$ reads as 
\begin{equation}\label{Eqn: N_K_simple_ver}
    \widetilde\rho_{i,\ell}(a)= N_{K, \Phi_i}(a_{\ell}^{-1})\quad \text{ if }a\in \OO_{K_\ell}^{\times} \text{, and }i=1,2.
\end{equation}
Now further assume that $\ell$ is a prime integer that totally splits in $K$, and use the fact that the reflex norm $N_{K, \Phi_i}=N_{E, \Phi_i}\circ { N}_{K/E}$, where $N_{K/E}$ is the usual norm map of fields. Then $K_{\ell}$ is unramified over $E_{\ell}$, and thus  $N_{K/E}(\OO_{K_{\ell}}^{\times})=\OO_{E_{\ell}}^{\times}$ \cite[V. \S2, Corollary of Proposition~3]{Serre-LF}. Hence \eqref{Eqn: N_K_simple_ver} factors through (which will still be denoted by $\widetilde\rho_{i, \ell}$ to ease notation)
\[
\widetilde\rho_{i,\ell}(a)= N_{E, \Phi_i}(a_{\ell}^{-1})\quad \text{ if }a\in \OO_{E_\ell}^{\times} \text{, and }i=1,2.
\]
Since $\ell$ splits in $K$, it is also totally split in the sub-extension $E$. Recall that $\Gal(E/\Q)=\langle \sigma \rangle\simeq \Z/(10)$, hence 
\[
\OO_{E_\ell}^{\times}=(\Z_\ell\otimes E)^{\times}\simeq \prod_{\tau\in \Gal(E/\Q) }\Z_{\ell,\tau}^{\times}=\prod_{i=0}^{9} \Z_{\ell, \sigma^i}^{\times},
\]
With respect to this isomorphism, every element $a_{\ell}$ of $\OO_{E_\ell}^{\times}$ can be expressed by a vector of the form $a_{\ell}=(a_1, a_2, a_3,\cdots, a_{10})$, and $\sigma\in \Gal(E/\Q)$ acts on $a_{\ell}$ by cyclically permuting its coordinates. By definition, 
\[
N_{E,\Phi_i}(a_\ell) = \prod_{\tau \in \Phi_i}\tau\inv(a_\ell).
\]
Fix $x,y \in \integ_\ell\units$, and consider the element $a_\ell \in \mathcal O_{E_\ell}\units$ with coordinates
\begin{align*}
a_\ell &= (x^{-2}y^{-1}, x^{-1}y^{-1}, xy, x^2y^2, x^2y, y, 1,1,1,x).
\intertext{Direct computation then shows that}
N_{E,\Phi_1}(a_\ell) &= (x y, x^{2} y^{2}, x y, x^{2} y^{2}, x y, x^{2} y^{2}, x y, x^{2} y^{2}, x y, x^{2} y^{2})\\
N_{E,\Phi_2}(a_\ell) &= (x^{3} y^{3}, x^{3} y^{3}, x^{3} y^{3}, x^{3} y^{3}, 1, 1, 1, 1, 1, x^{3} y^{3}) 
\end{align*}
Now suppose that $xy$ is a primitive third root of unity; this is possible, of course, exactly if $\ell \equiv 1 \bmod 3$.  On one hand, $N_{E,\Phi_1}(a_\ell) = 1$; on the other hand, because $xy\not\equiv 1 \bmod \ell$ and $(xy)^2\not\equiv 1 \bmod \ell$, $N_{E,\Phi_2}(a_\ell)$ acts without fixed points on $\mathcal O_E\otimes \integ/\ell$.
In particular, $a_\ell \in\ker \widetilde\rho_{2,\ell}\smallsetminus \ker \widetilde\rho_{1,\ell}$. 
 Let $g\in \gal(K)$ be the image of $a_\ell$ under the reciprocity map.  
By \eqref{Eqn: N_K_simple_ver} and the formula of Serre-Tate, $g\in \ker \rho_{2,\ell}\smallsetminus \ker\rho_{1,\ell}$. Moreover, since $\rho_{1,\ell}(g)= \widetilde\rho_{1,\ell}(a_{\ell})\in E^{\times}$, we know that it does not have eigenvalue $1$, even when working with $\integ/\ell$-coefficients. In particular, $A_1[\ell](K_{A_2,\ell^\infty})$ is trivial.

Finally, notice that we can produce such an $a_\ell$ for each $\ell$ with $\ell \equiv 1 \bmod 3$ which is totally split in $K$.  Since $A_1$ has independent extensions, there is not a finite extension $L/K$ on which each $\operatorname{art}(a_\ell)$ acts trivially.  In particular, there is no finite extension $L/K$ such that, for each $\ell$, $A_1[\ell^\infty](L_{A_2,\ell^\infty}) = A_1[\ell^\infty](\bar L)$.

\end{exa}

\begin{exa}     In \cite[Thm.~5.1]{Lombardo-iso-Kummerian}, Lombardo gives a construction of an infinite family of iso-Kummerian CM pairs of abelian varieties.  We briefly interpret his work in the framework developed here.

    Given an a CM field $E$ which is the compositum of a cyclic totally real field of dimension $g$ and a quadratic imaginary field, and the auxiliary choice of two integers $r$ and $h$, Lombardo defines two different CM types $\Phi_1$ and $\Phi_2$, and chooses corresponding abelian varieties $A_1$ and $A_2$.  After passage to a suitably large common field of definition $K$, one shows that the kernels of the $\ell$-adic representations $\widetilde \rho_{A_k,K,\ell^\infty}$ coincide.

This calculation shows that the characters of $T^E$ which vanish on the image of $T^K$ under $N_{K,\Phi_1}$ are the same as those characters which vanish on the image of $T^K$ under $N_{K,\Phi_2}$.  Consequently, $\mt(A_1)$ and $\mt(A_2)$ are the same sub-torus of $T_E$, and thus $A_1$ and $A_2$ are mutually torsion infinite.
    
\end{exa}

 \subsection{Extra Hodge classes and torsion infiniteness}\label{Sect: extra cycles}
Following the notation in the previous section, let $A_1$ and $A_2$ be two isotypic CM abelian varieties over $K$.  In this section, we will see that if $A_1$ is potentially torsion infinite for $A_2$, then this is explained by a certain sort of Hodge class in some degree $2w$ on some product $A_1^m \times A_2^n$.  The particular values of $w$, $m$ and $n$ are not unique; and even once these are specified, the class itself, or even its $\rat$-span is not canonical.  Consequently, we will call any such Hodge class a ''torsion-infinite Hodge class from $A_1$ to $A_2$'', even though it actually lives on some unspecified product $A_1^m \times A_2^n$.

Suppose that $A_i$ has a CM-type $(E, \Phi_i)$ where $E$ is a CM Galois extension of $\Q$. We also assume that the base field $K$ is sufficiently large (e.g., it contains $E$).

We first describe the Hodge classes on $A_{1}^{m}\times A_{2}^{n}$ (see \cite{Pohlmann} for more details). Let  $V_i=H^{1}(A_{i}, \Q)$.  Recall that $\ang{V_i}$, the tensor category generated by $V_i$,  is equivalent to the category $\Rep_{\Q}(\mt(A_i))$ of representations of $\mt(A_i)$. By Lemma \ref{Lem: MTC_CM_Ab.var} and our assumption for $E$, the reflex norm defines a quotient map $N_{E, \Phi_i}: T^{E}\twoheadrightarrow \mt(A_{i})$, which induces a fully faithful map on the categories of representations $\Rep_{\Q}(\mt(A_i))\rightarrow \Rep_{\Q}(T^{E})$. This allows us to describe the Hodge classes on $A_{1}^{m}\times A_{2}^{n}$ using the representation theory of the algebraic torus $T^{E}$ for any positive integers $m$ and $n$.

We denote the representation $T^{E}\twoheadrightarrow \mt(A_i) \hookrightarrow \GL_{V_i}$ by $\rho_i$. Note that $X^{\ast}(T^{E})\cong \bigoplus_{\sigma\in \Gal(E/\Q)} \Z\langle \sigma \rangle$, and the Galois group $\Gal (E/\Q)$ acts on it by left multiplication. Since $E/\Q$ is Galois, we can identify $\Phi_{i}$ with a subset of $\Gal(E/\Q)$. 

For any representation $\rho:T^{E}\rightarrow \GL_V$, we let $\Xi_{V}$ (or $\Xi_{\rho}$) be the collection of weights of this representation. The set $\Xi_{V}$ is a finite sub-multiset of $X^{\ast}(T^{E})$; the support $\supp(\Xi_V)$ of $\Xi_V$ -- that is, those elements of $X^*(T^E)$ with nonzero multiplicity -- is finite, and all multiplicities are finite. For future use, we note that if $\Xi_V = \set{\alpha_1, \dots, \alpha_d}$ is a set of distinct characters, then $\operatorname{supp}(\Xi_{V^{\oplus m}})$ is the same set, and each weight now occurs with multiplicity $m$.  Moreover, the support of $\Xi_{\wedge^r(V^{\oplus m})}$ is then
\begin{equation}
    \label{E:char of wedge}
\operatorname{supp}(\Xi_{\wedge^r(V^{\oplus m})}) = \set{ \sum e_i \alpha_i: \sum e_i = r\text{ and }0 \le e_i \le m\text{ for each }i}.
\end{equation}

By the definition of the reflex norm,
\[
\Xi_{V_{i}}=\set{ \sum_{\sigma \in \Phi_{i}} (\sigma\circ \varphi^{-1})\mid \varphi\in \Gal(E/\Q)}.
\]
 Since $\Gal(E/\Q)$ acts transitively on this set, we have 
\[
X^{\ast}(T_{\Phi_i})\otimes \Q\cong \sum_{\alpha \in \Xi_{V_{i}}}\Q \langle\alpha\rangle\subset X^{\ast}(T^{E})\otimes \Q
\]
as Galois modules. We also denote the 1-dimensional representation $\Nm :T^{E}\rightarrow \G_{m}$ by $\Q(1)$. The weight of the representation $\rat(1)$ is $\chi:=\sum_{\sigma\in \Gal(E/\Q)}\sigma \in X^{\ast}(T^{E})$. If $n\ge 0$ let $\rat(n) = \rat(1)^{\otimes n}$, and if $n < 0$ set $\rat(n) = \rat(1)^{\vee, \otimes -n}$.  Finally, let $\rat(0)$ denote the trivial representation of $T^E$.  If $V$ is any representation of $T^E$, then we let $V(n) = V\otimes \rat(n)$.
If $V=H^{1}(A, \Q)$ for some abelian variety $A$, then $V^{\vee}\cong V(1)$.

If $V$ is any Hodge structure, the group of Hodge classes in $V$ is $\Hom_{\Q-\mathrm{HS}}(\one, V)$, where $\one$ is the trivial Hodge structure.
 For an integer $w\ge 0$ we have
\[
\begin{split}
\Hom_{\Q-\mathrm{HS}}(\one, H^{2w}(A_{1}^{m}\times A_{2}^{n}, \Q)(w))&\cong \Hom_{T^{E}} (\Q(0), H^{2w}(A_{1}^{m} \times A_{2}^{n}, \Q)(w)) \\
&\cong H^{2w}(A_{1}^{m} \times A_{2}^{n}, \Q)(w)^{T^{E}} \\
&\cong \bigoplus_{r+s=2w}(H^{r}(A_{1}^{m}, \Q) \otimes H^{s}(A_{2}^{n}, \Q))(w)^{T^{E}} \\
&\cong \bigoplus_{r+s=2w}(\wedge^{r} V_{1}^{\oplus m} \otimes (\wedge^{s}V_{2}^{\oplus n}(w))^{T^{E}}\\
&\cong \bigoplus_{r+s=2w}((\wedge^{r} V_{1}^{\vee, \oplus m})(-r) \otimes (\wedge^{s}V_{2}^{\oplus n})(w))^{T^{E}}\\
&\cong \bigoplus_{r+s=2w}(\wedge^{r} V_{1}^{\vee, \oplus m} \otimes (\wedge^{s}V_{2}^{\oplus n})(w-r))^{T^{E}}.
\end{split}
\]
So the $\rat$-span of a Hodge class can be identified with an element $\alpha \in X^{\ast}(T^{E})$ such that   \[
-\alpha\in \Xi_{\wedge^{r} V_{1}^{\vee, \oplus m}}\ \ \text{and}\ \ \alpha\in \Xi_{(\wedge^{s}V_{2}^{\oplus n})(w-r)}.
\]
Using the polarization on $V_1$, we rewrite these conditions as
\[
\alpha\in \Xi_{\wedge^{r} V_{1}^{\oplus m}}\ \ \text{and}\ \ \alpha\in \Xi_{(\wedge^{s}V_{2}^{\oplus n})(w-r)}.
\]
 Moreover, the existence of a Hodge class in degree $2w$ on some product $A_1^m \times A_2^n$ is equivalent to the existence of  $\alpha\in X^{\ast}(T^{E})$ and $r$ with $0 \le r \le 2w$ such that $\alpha$ is a $\integ_{\ge 0}$-linear combination of the weights in $\Xi_{V_{1}}$ and 
$\alpha+(w-r)\chi$ is a $\integ_{\ge 0}$-linear combination of the weights in $\Xi_{V_{2}}$. Let $s = 2w-r$.  The choices $(r,s) = (0,2w)$ and $(r,s) = (2w,0)$ correspond to Hodge classes which come from $A_1^m$ and $A_2^n$ by pullback, while classes with $r$ and $s$ positive are conjecturally the classes of nontrivial correspondences between $A_1^m$ and $A_2^n$.
 
For such $\alpha\in \Xi_{\wedge^{r} V_{1}^{ \oplus m}}$, if moreover $\alpha=r\alpha_{0}$ for some $\alpha_{0}\in \Xi_{V_{1}}$ and some positive integer $r$, i.e., 
\[ 
\alpha\in r \Xi_{V_1} \subset r \Xi_{V_1^{\oplus m}} \subset \Xi_{\wedge^{r} V_{1}^{\oplus m}}\ \ \text{and}\ \ \alpha\in \Xi_{(\wedge^{s}V_{2}^{\oplus n})(w-r)}.
\]
we call the related Hodge classes \emph{torsion-infinite Hodge classes from $A_{1}$ to $A_{2}$}, regardless of the choice of $m$ and $n$ (and of $r$ and $s$).
 As a consequence of our definition, these classes are in $H^{r,0}(A_{1}^{m})^{\vee}\otimes H^{s}(A_{2}^{n}, \C)(w-r)$ (or $H^{0,r}(A_{1}^{m})^{\vee}\otimes H^{s}(A_{2}^{n}, \C)(w-r)$).  In particular, these classes are \emph{not} in the $\rat$-span of those classes which are pulled back from $A_1^m$ or $A_2^n$, and thus the torsion-infinite Hodge classes are extra Hodge classes. 

 \begin{prop}\label{Prop: torsion_infinite_hodge-cycles}
Let $A_1$ and $A_2$ be two isotypic CM abelian varieties over a number field $K$.  Suppose that $\operatorname{Hom}_{\bar K}(A_1, A_2) =(0)$, i.e., that  $A_{1,\bar K}$ and $A_{2,\bar K}$ have no common nontrivial isogeny factor.
 
Then $A_1$ is potentially torsion infinite for $A_2$ if and only if there is a torsion-infinite Hodge class from $A_{1}$ to $A_2$.
\end{prop}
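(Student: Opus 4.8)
The plan is to reduce the statement, via Theorem~\ref{T:A abs simp} and the character‑lattice computation \eqref{E:HAB for tori reprise}, to a containment of $\rat$‑subspaces of $X^*(T^E)_\rat$, and then to match that containment against the combinatorial criterion for torsion‑infinite Hodge classes established above. Throughout I work in the standing setup of this section: using the hypothesis $\Hom_{\bar K}(A_1,A_2)=(0)$ together with Remark~\ref{R: replace Galois CM} and Remark~\ref{R:CM common field}, I replace $A_1$ and $A_2$ by suitable powers so that both have CM by one Galois CM field $E$, with $\mt(A_i)=T_i$ a quotient of $T^E$, $\Xi_{V_i}\subset X^*(T^E)$ the set of weights of $V_i=H^1(A_i,\rat)$, and $\dim A_1=\dim A_2=g:=[E:\rat]/2$. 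This replacement affects neither side of the claimed equivalence: potential torsion infiniteness of the pair is unchanged by Lemma~\ref{L:isogenous}(c), and the notion of a torsion‑infinite Hodge class already quantifies over all powers of $A_1$ and $A_2$.

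\emph{Reduction.} By Theorem~\ref{T:A abs simp} and Remark~\ref{R:A isotypic} (an isotypic CM abelian variety is absolutely isotypic), $A_1$ is potentially torsion infinite for $A_2$ if and only if $\dim H_{12}=0$. Since $H_{12}$ is of multiplicative type, this holds if and only if $X^*(H_{12})_\rat=0$, which by \eqref{E:HAB for tori reprise} tensored with $\rat$ means precisely $X^*(T_1)_\rat\subseteq X^*(T_2)_\rat$ inside $X^*(T^E)_\rat$. Thus it suffices to prove: $X^*(T_1)_\rat\subseteq X^*(T_2)_\rat$ if and only if there exist $\alpha_0\in\Xi_{V_1}$ and integers $r\ge 1$, $k\ge 0$ such that $r\alpha_0+k\chi$ is a $\integ_{\ge 0}$‑linear combination of weights in $\Xi_{V_2}$; for the latter is exactly the combinatorial description, given above, of a torsion‑infinite Hodge class from $A_1$ to $A_2$ (take $w=r+k$, $s=2w-r$, and $m,n$ large, the constraint $\sum(\text{coefficients})=s$ being automatic upon applying the augmentation map $\integ[\gal(E/\rat)]\to\integ$, under which $\alpha_0\mapsto g$ and $\chi\mapsto 2g$).

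\emph{The dictionary.} Two facts about $\Xi_{V_i}$ do the work. First, $\Xi_{V_i}$ is a single $\gal(E/\rat)$‑orbit for the left‑translation action on $X^*(T^E)\cong\integ[\gal(E/\rat)]$, consisting of the indicators $\mathbb 1_{\tau\Phi_i^{-1}}$, and it spans $X^*(T_i)_\rat$. Second, because complex conjugation $c$ is central in $\gal(E/\rat)$ — a standard property of CM fields — $\Xi_{V_i}$ is closed under $\beta\mapsto\bar\beta:=\chi-\beta$; in particular $\chi=\beta+\bar\beta\in X^*(T_i)_\rat$ for each $i$. Given a torsion‑infinite Hodge class, i.e.\ such $\alpha_0,r,k$, the elements $r\alpha_0+k\chi$ and $k\chi$ lie in $X^*(T_2)_\rat$, hence so do $r\alpha_0$ and $\alpha_0$; by $\gal(E/\rat)$‑stability of $X^*(T_2)_\rat$ and the first fact, $X^*(T_1)_\rat=\rat\langle\Xi_{V_1}\rangle\subseteq X^*(T_2)_\rat$. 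Conversely, assume $X^*(T_1)_\rat\subseteq X^*(T_2)_\rat$, fix $\alpha_0\in\Xi_{V_1}$, and clear denominators to write $r_0\alpha_0=\sum_{\beta\in\Xi_{V_2}}m_\beta\beta$ with $r_0\ge 1$ and $m_\beta\in\integ$. Replacing $m_\beta\beta$ by $|m_\beta|\bar\beta-|m_\beta|\chi$ for each $\beta$ with $m_\beta<0$ and collecting terms gives, with $k:=\sum_{m_\beta<0}|m_\beta|\ge 0$,
\[
r_0\alpha_0+k\chi=\sum_{m_\beta>0}m_\beta\beta+\sum_{m_\beta<0}|m_\beta|\bar\beta ,
\]
a $\integ_{\ge 0}$‑linear combination of weights in $\Xi_{V_2}$; taking $r=r_0$ produces the desired datum, and with the reduction this finishes the proof.

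I expect the crux to be precisely this positivity step in the converse direction — promoting an a priori \emph{integral} relation $r_0\alpha_0=\sum m_\beta\beta$ to a genuinely \emph{non‑negative} one at the cost of adding a nonnegative multiple of $\chi$. The mechanism that makes it work is the self‑duality $\Xi_{V_2}=\chi-\Xi_{V_2}$ coming from the centrality of complex conjugation; the remaining points (that the numerical parameters $r,s,w,m,n$ can be chosen to meet the constraints in the definition, and that the resulting weight identity genuinely assembles into a Hodge class via the chain of isomorphisms preceding the definition) are routine bookkeeping.
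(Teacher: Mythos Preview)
Your proof is correct and follows essentially the same route as the paper's: both reduce via Theorem~\ref{T:A abs simp} (or Theorem~\ref{Thm: main_thm_CM_pair}) to the containment $X^*(T_1)_\rat\subseteq X^*(T_2)_\rat$, both use the self-duality $\Xi_{V_2}=\chi-\Xi_{V_2}$ from complex conjugation to convert an integral relation into one with nonnegative coefficients at the cost of a multiple of $\chi$, and both invoke Galois transitivity on $\Xi_{V_1}$ for the converse. The only cosmetic differences are that the paper makes coefficients nonnegative over $\rat$ first and then clears denominators, while you clear denominators first and then flip signs; and that your reformulation restricts to $k\ge 0$, which is harmless since any torsion-infinite datum with $k<0$ can be shifted by adding $|k|\chi=|k|\beta+|k|\bar\beta$ to both sides.
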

\begin{proof}
Assume that $A_1$ is potentially torsion infinite for $A_2$ over $K$. By Theorem~\ref{Thm: main_thm_CM_pair}, 
\[
X^{\ast}(T_{\Phi_{1}})\otimes \Q\subset X^{\ast}(T_{\Phi_{2}})\otimes \Q.
\]
Suppose that $\alpha\in \Xi_{V_1}$. Then $\alpha=\sum_{\beta\in \Xi_{V_{2}}}c_{\beta} \beta$ for certain rational numbers $c_\beta$.
  Note that, if $\beta\in \Xi_{V_{2}}$, then its complex conjugate $\beta\circ c = \chi-\beta$ is in $\Xi_{V_2}$, too.
 Using this fact, we can rewrite $\alpha$ as
\[
\alpha=\left(\sum_{\beta\in \Xi_{V_{2}}}c_{\beta}^{+}\beta\right)-c_{\chi}^{+} \chi,
\]
where $c_{\beta}^{+}$ and $c_{\chi}^{+}$ are non-negative rational numbers. Choose a positive integer $m$ such that $mc_{\beta}^{+}$ and $mc_{\chi}^{+}$ are integers. Then
\begin{equation}
    \label{E:represent alpha in V2}
m\alpha=(\sum_{\beta\in \Xi_{V_{2}}}mc_{\beta}^{+} \beta) -mc_{\chi}^{+}\chi.
\end{equation}
Let $n=\sum_{\beta\in \Xi_{\rho_2}}mc_{\beta}^{+}$.

Consider the embedding $\mathbb G_m \to T^E$ induced by $\rat\hookrightarrow E$, and let $\tau$ be the standard (positive) generator of $X^*(\mathbb G_m)$.  Then $\gamma|_{\mathbb G_m} = g\tau$ for any $\gamma \in \Xi_{V_1}\cup \Xi_{V_2}$,
while $\chi|_{\mathbb G_m} = 2g\tau$.  Thus, restricting \eqref{E:represent alpha in V2} to $\mathbb G_m$ and computing coefficients of $\tau$ yields
\[
mg = ng - 2 mc_\chi^+.
\]
In particular, $m+n=n-m +2m=2mc_{\chi}^{+}+2m=2(mc_{\chi}^{+}+m)$ is even. Then $H^{m}(A_{1}^{m},\Q)\otimes H^{n}(A_{2}^{n},\Q)(mc_{\chi}^{+}+m)$ contains a torsion-infinite Hodge class from $A_{1}$ to $A_{2}$.

Conversely, if there is a torsion-infinite Hodge class from $A_{1}$ to $A_{2}$, then there exists a weight $\alpha_0\in \Xi_{V_{1}}$ which is a $\rat$-linear combination of the weights in $\Xi_{V_{2}}$. Since $\Gal(E/\Q)$ acts transitively on $\Xi_{V_{1}}$, $X^{\ast}(T_{\Phi_{1}})\otimes \Q\subset X^{\ast}(T_{\Phi_{2}})\otimes \Q $. By theorem~\ref{Thm: main_thm_CM_pair}, $A_1$ is potentially torsion infinite for $A_2$ over $K$.
\end{proof}

\begin{remark}
In the proof, we choose  $r=m$ and $s=n=\sum_{\beta\in \Xi_{\rho_{2}}}mc_{\beta}^{+}$ for convenience. However, sometimes smaller $m$ and $n$ can be chosen. See Examples \ref{Exa: torsion_infinite-7} and \ref{Exa: torsion_infinite-11}.
\end{remark}

Before displaying some concrete examples, let us prove the following lemma. Recall our discussion of nondegenerate abelian varieties (\ref{S:nondegenerate}).
 
 \begin{lemma}\label{Lem: nondegenerate}
Let $A_1$ and $A_2$ be two CM abelian varieties. Suppose that $A_i$ has a CM-type $(E, \Phi_i)$ and $A_{2}$ is nondegenerate. Then $X^{\ast}(T_{\Phi_{1}})\otimes \Q\subseteq X^{\ast}(T_{\Phi_{2}})\otimes \Q$. In particular, if $A_{1}$ and $A_{2}$ are nondegenerate, then $X^{\ast}(T_{\Phi_{1}})\otimes \Q= X^{\ast}(T_{\Phi_{2}})\otimes \Q$.
\end{lemma}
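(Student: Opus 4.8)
The plan is to work inside the rational group algebra $\Q[G]$ of $G := \gal(E/\Q)$ and to exploit the action of complex conjugation. Recall (the discussion around \eqref{Eqn: char_of_T_Phi}) that $X^*(T^E)\otimes\Q$ is the left regular representation of $G$ -- identify it with $\Q[G]$ on which $G$ acts by left multiplication -- and that under this identification $\chi = \sum_{\sigma\in G}\sigma$, while $V_i := X^*(T_{\Phi_i})\otimes\Q$ is the left $\Q[G]$-submodule generated by $v_i := \sum_{\varphi\in\Phi_i}\varphi^{-1}$. Since $E$ is a Galois CM field, complex conjugation $c$ is a \emph{central} involution of $G$; hence left multiplication by $c$ is an involution of $\Q[G]$ that commutes with the $G$-action, and $\Q[G] = \Q[G]^{c=1}\oplus\Q[G]^{c=-1}$ with each summand of dimension $g := \tfrac12[E:\Q] = \dim A_i$. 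Every left submodule, in particular $V_i$, is then $c$-stable.

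First I would record the identity $c\cdot v_i = \chi - v_i$; this is a one-line computation, using the defining property \eqref{Eqn: half_set} of a CM type -- which, as $c$ is central, says $\Phi_i c = G\smallsetminus\Phi_i$ -- together with $c = c^{-1}$. Since $c\in G$ we get $\chi = v_i + c\cdot v_i \in V_i$, and since $g\cdot\chi = \chi$ for all $g\in G$, applying the projector $\tfrac12(1+c)$ to a general element $\sum_{g} a_g\, g\cdot v_i$ of $V_i$ yields $(\sum_g a_g)\tfrac{\chi}{2}$. Therefore $V_i^{c=1} = \Q\chi$ is one-dimensional, so $V_i = \Q\chi\oplus V_i^{c=-1}$ with $V_i^{c=-1}\subseteq\Q[G]^{c=-1}$; in particular $\dim V_i\le g+1$ (recovering the standard inequality $\dim \mt(A_i)\le\dim A_i+1$ for a CM abelian variety).

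Now the nondegeneracy hypothesis enters. By Lemma \ref{Lem: MTC_CM_Ab.var}, $\mt(A_2)\cong T_{\Phi_2}$, so nondegeneracy of $A_2$ means $\dim V_2 = \dim\mt(A_2) = \dim A_2 + 1 = g+1$, which forces $V_2^{c=-1} = \Q[G]^{c=-1}$. Combining this with the decomposition $V_1 = \Q\chi\oplus V_1^{c=-1}$, the containment $V_1^{c=-1}\subseteq\Q[G]^{c=-1} = V_2^{c=-1}\subseteq V_2$, and $\chi\in V_2$, we obtain $V_1\subseteq V_2$, i.e., $X^*(T_{\Phi_1})\otimes\Q\subseteq X^*(T_{\Phi_2})\otimes\Q$. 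If $A_1$ is also nondegenerate, running the same argument with the indices reversed gives the reverse inclusion, hence the asserted equality.

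I do not anticipate a genuine obstacle; the only point demanding care is the bookkeeping distinguishing left from right multiplication in $\Q[G]$ and the repeated use of the centrality of $c$ (which is exactly what makes $\Phi_i c = G\smallsetminus\Phi_i$ and what makes the $c$-eigenspace decomposition compatible with the $G$-action). Everything else is elementary linear algebra over $\Q[G]$.
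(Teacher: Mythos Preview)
Your proof is correct. Both arguments ultimately rest on the same dichotomy --- that the ``trivial'' part of $X^*(T_{\Phi_i})\otimes\Q$ is always the line $\Q\chi$, while its complement sits inside a fixed $g$-dimensional space --- but they package it differently. The paper works on the torus side: it observes that the norm-one torus $\mathbb{U}_1^{E^+}$ of the maximal totally real subfield $E^+$ always lies in $(\ker N_{E,\Phi_i})^\circ$ (because $N_{E,\Phi_i}|_{T^{E^+}} = N_{E^+/\Q}$), with equality precisely in the nondegenerate case, and then reads off the desired inclusion from the induced surjection $X^*(\ker N_{E,\Phi_1})\otimes\Q \twoheadrightarrow X^*(\ker N_{E,\Phi_2})\otimes\Q$ via a short-exact-sequence diagram. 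You instead compute directly in $\Q[G]$ using the $c$-eigenspace decomposition; this is the character-lattice shadow of the same phenomenon (the $c=1$ eigenspace is dual to $T^{E^+}$, and your identity $(1+c)v_i=\chi$ is dual to the paper's $N_{E,\Phi_i}|_{T^{E^+}}=N_{E^+/\Q}$). Your route is a bit more self-contained, avoiding the auxiliary torus $\mathbb{U}_1^{E^+}$ altogether; the paper's formulation has the virtue of making the geometric reason visible (every reflex-norm kernel contains the same $(g-1)$-dimensional torus) and of slotting into the standard language of CM tori.
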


\begin{proof}
First, we let $E^+$ be the totally real subfield of $E$ and let $\mathbb{U}_1^{E^+}$ be the norm one subtorus of $T^{E^+}$. For any CM type $(E,\Phi)$, 
\[
\mathbb{U}_1^{E^+}\subset (\ker (N_{E, \Phi}))^{\circ}
\]
because the restriction of $N_{E,\Phi}$ to $T^{E^+}$ is simply $N_{E^+/\rat}$; and a dimension count shows that equality holds if and only if $(E,\Phi)$ is nondegenerate.

Under the hypotheses of the lemma, we have the following commutative diagram 
\[
\begin{tikzcd}
 0\ar[r] & X^{\ast}(T_{\Phi_{1}})\otimes\Q\ar[r, hook]& X^{\ast}(T^{E})\ar[d, equal]\otimes \Q\ar[r, two heads]&  X^{\ast}(\ker (N_{E, \Phi_{1}}))\otimes \Q\ar[r]\ar[d, two heads, "f"] & 0\\
  0\ar[r] & X^{\ast}(T_{\Phi_{2}})\otimes\Q\ar[r, hook]& X^{\ast}(T^{E})\otimes \Q\ar[r, two heads]&  X^{\ast}(\ker (N_{E, \Phi_{2}}))\otimes \Q\ar[r] & 0
\end{tikzcd}
\]
where the surjection $f$ is induced by the inclusion $(\ker (N_{E, \Phi_2}))^{\circ}=\mathbb{U}_1^{E^+}\hookrightarrow (\ker (N_{E, \Phi_1}))^{\circ}$. This implies that $X^{\ast}(T_{\Phi_{1}})\otimes \Q\subseteq X^{\ast}(T_{\Phi_{2}})\otimes \Q$. 
\end{proof}

\begin{exa}\label{Exa: torsion_infinite-7}
Let $E$ be $\Q(\zeta_{7})$. Then $\Gal(E/\Q)\cong\langle \sigma| \sigma^{6}=1\rangle$.  There are two isomorphism classes of CM types for $E$:
\begin{align*}
    \Phi_1 &= \left\{ 1, \sigma^2, \sigma^4\right\} &\Phi_2 &= \left\{1, \sigma, \sigma^2\right\}
\end{align*}
Let $A_i$ be an abelian variety with CM type $(E,\Phi_i)$.  Then $A_1$ is geometrically isogenous to the third power of an elliptic curve with CM by $\Q(\sqrt{-7})$, while $A_2$ is nondegenerate.  By Lemma~\ref{Lem: nondegenerate}, $A_1$ is potentially torsion infinite for $A_2$.  In fact, we have
\begin{align*}
      1+\sigma^{2}+\sigma^{4} & =(1+\sigma+\sigma^2)+(\sigma^2+\sigma^3+\sigma^4)+(\sigma^4+\sigma^5+1)-\chi\\
    &= \sum_{\tau\in \Phi_2} \tau + \sigma^2(\sum_{\tau\in \Phi_2} \tau)+ \sigma^4(\sum_{\tau\in \Phi_2} \tau)-\chi.  
\end{align*}
This means that $H^1(A_1,\Q)^\vee \otimes H^3(A_2,\Q)(1)$ contains a torsion-infinite Hodge class from $A_1$ to $A_2$.
\end{exa}

\begin{exa}
We return to the setting of Example~\ref{Exa: torsion_infinite-11}, with $E = \Q(\zeta_{11})$. Consider $A_2$ and $A_3$ with CM types $(E, \Phi_2=\{1, \sigma^6, \sigma^2, \sigma^3, \sigma^4\})$ and $(E, \Phi_{3}=\{1, \sigma^3, \sigma^6, \sigma^{9}, \sigma^2\})$. Since $A_2$ and $A_3$ are primitive, by Lemma \ref{Lem: nondegenerate}, $X^{\ast}(T_{\Phi_2})\otimes\Q= X^{\ast}(T_{\Phi_3})\otimes \Q$. Considering the relation between $X^{\ast}(T_{\Phi_2})$ and $X^{\ast}(T_{\Phi_3})$, we have 
\[
X^{\ast}(T_{\Phi_2})\subseteq X^{\ast}(T_{\Phi_{3}}).
\]
More precisely,
\[
\begin{split}
1+\sigma^{6}+\sigma^2+\sigma^{3}+\sigma^4=&(1+\sigma^3+\sigma^6+\sigma^{9}+\sigma^2)+(\sigma^2+\sigma^5+\sigma^{8}+\sigma+\sigma^{4})\\
&+(\sigma^{4}+\sigma^7+1+\sigma^{3}+\sigma^6) -\chi,
\end{split}
\]
and 
\[
\begin{split}
    3(1+\sigma^3+\sigma^6+\sigma^9+\sigma^2)=& 2(1+\sigma^6+\sigma^2+\sigma^3+\sigma^4)+(\sigma^7+\sigma^3+\sigma^9+1+\sigma)\\
    &+(\sigma^9+\sigma^5+\sigma+\sigma^2+\sigma^3)+2(\sigma^6+\sigma^2+\sigma^8+\sigma^9+1)\\
    &+(\sigma^3+\sigma^9+\sigma^5+\sigma^6+\sigma^7)-2\chi.
\end{split}
\]
So $A_2$ and $A_3$ are potentially torsion infinite for each other.
Moreover, $H^{1}(A_3, \Q)^{\vee}\otimes H^{3}(A_2, \Q)(1)$ contains a torsion-infinite Hodge class from $A_3$ to $A_2$, and $H^{3}(A_2^3, \Q)^{\vee}\otimes H^{7}(A_3^2, \Q)(2)$ contains a torsion-infinite Hodge class from $A_2^3$ to $A_3^2$.

\end{exa}

\subsection{Low dimension abelian varieties}\label{Sect: low dim}

In \cite[\S2]{Moonen-Zarhin-Hg-cls-av-low-dim}, Moonen and Zarhin list all the possible Hodge groups for absolute simple abelian varieties with dimension $g \leq 3$. We will follow their classification and use the notation $(g, {\rm Type})$ to denote an absolutely simple abelian variety with dimension $g$ and the indicated endomorphism type in the Albert classification. For instance, $(2, {\rm IV}(2,1))$ refers to an absolutely simple CM abelian surface. 

\begin{thm}\label{Thm: torsion_finite_low_dim}
Suppose $A$ and $B$ are absolutely simple abelian varieties over a common number field and assume that they are  non-isogenous over $\C$. Suppose that $\dim A \le \dim B \le 3$.  Then $A$ and $B$ are mutually essentially torsion finite except for the following cases:
\begin{enumerate}[label=(\alph*)]
\item $A$ is a CM elliptic curve and $B$ is of type $(3, {\rm IV}(3,1))$, i.e., $B$ is a simple CM abelian threefold. Then $B$ is  essentially torsion finite for $A$; and $A$ is potentially torsion infinite for $B$ exactly when there is an embedding of $\rat$-algebras $\End^0(A)\hookrightarrow \End^0(B)$. 
    \item  $A$ is a CM elliptic curve and $B$ is of type $(3, {\rm IV}(1,1))$. Then  $B$ is  essentially torsion finite for $A$; and $A$ is potentially torsion infinite for $B$ exactly when there is an isomorphism of $\rat$-algebras $\End^0(A) \iso \End^0(B)$.
    \item $[A,B]$ is of type $[(3,{\rm IV}(3,1)), (3,{\rm IV}(3,1))]$, i.e., both of them are CM abelian threefolds. Then the essential torsion finiteness depends on the CM-types of $A$ and $B$ as in Theorem~\ref{Thm: main_thm_CM_pair}.
\end{enumerate}
\end{thm}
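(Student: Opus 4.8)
The plan is to reduce everything to Theorem~\ref{T:A abs simp}. Since the Mumford--Tate conjecture holds for abelian varieties of dimension $\le 3$ \cite{Moonen-Zarhin-Hg-cls-av-low-dim}, $A$ is potentially torsion infinite for $B$ iff $\dim H_{A,B}=0$, equivalently $\rank\mt(A\times B)=\rank\mt(B)$ (Lemma~\ref{Lem: dim_and_rank}), and symmetrically for $B$ and $A$. Unwinding the Goursat diagram of Section~\ref{SS:MT pair}, and using that the projection $sG_{A\times B}\twoheadrightarrow sG_A$ is always surjective (the common weight character pulls back compatibly), this condition becomes: $sG_B$ admits $sG_A$ as a quotient, via the Goursat quotient $sG_B/H_{B,A}$. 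I would then run through the Moonen--Zarhin classification \cite{Moonen-Zarhin-Hg-cls-av-low-dim} of the groups $sG$ for absolutely simple $A$ with $\dim A\le 3$, recording in each case whether $sG$ is a torus, a $\rat$-simple (semisimple, hence perfect) group, or a $\rat$-form of $\U(p,q)$ whose abelianization is a norm-one torus $T^{F_0,1}$. For type IV the crucial distinction is the "half-twist": when the relative dimension is even (e.g. $(2,\mathrm{IV}(1,1))$) $sG$ is a form of $\SU$, hence perfect; when it is odd (e.g. $(1,\mathrm{IV}(1,1))$, $(3,\mathrm{IV}(1,1))$) it is a form of $\U$, with a one-dimensional torus quotient.

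Most pairs are then eliminated uniformly. If both $sG_A$ and $sG_B$ are $\rat$-simple (types I, II, and type IV of even relative dimension), Goursat forces either $sG_{A\times B}=sG_A\times sG_B$ or a $\rat$-isogeny $sG_A\sim sG_B$ matching the standard representations; in the latter case $H_1(A,\rat)\iso H_1(B,\rat)$ as $\mt(A\times B)$-representations, so $A\sim B$ by Lemma~\ref{L:MT isomorphic}, contrary to hypothesis. If one of $sG_A$, $sG_B$ is a torus and the other is perfect, the only common quotient is trivial, so again $sG_{A\times B}=sG_A\times sG_B$. Either way $A$ and $B$ are mutually essentially torsion finite. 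This leaves only the cases where $A$ and $B$ are \emph{both} of type IV with $sG$ either a torus or a $\U(p,q)$-form, i.e.\ (a), (b), (c) together with the sub-cases $(\dim A,\dim B)\in\{(1,2),(2,2),(2,3)\}$, which must be handled by hand.

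For a CM elliptic curve $A$ (with $\End^0(A)=F$) versus a simple CM abelian variety $B$ of dimension $2$ or $3$, both are CM; using Remark~\ref{R: replace Galois CM} to arrange Galois CM fields and Remark~\ref{R:A isotypic}, Theorem~\ref{Thm: main_thm_CM_pair} reduces the question to whether $X^*(T^F)_\rat\subseteq X^*(T_{\Phi_B})_\rat$. Projecting to isotypic components in \eqref{Eqn: char_of_T_Phi}, I expect this to hold iff the quadratic character $\chi_F$ occurs in $X^*(T_{\Phi_B})_\rat$, which (computing the $\chi_F$-component of the generating vector, whose coefficient is $\sum_{\varphi\in\Phi_B}\chi_F(\varphi)$, odd whenever $\chi_F$ factors through $\Gal(\End^0(B)/\rat)$) happens iff $F\hookrightarrow\End^0(B)$. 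Since a quartic CM field admitting a primitive CM type has no imaginary quadratic subfield, this never holds for $\dim B=2$ (no exception there), and gives exactly case (a) for $\dim B=3$; and $B$ is essentially torsion finite for $A$ since $\rank\mt(B)=\dim\mt(B)\ge 3>2=\dim\mt(A)$. For case (b), $B=(3,\mathrm{IV}(1,1))$ with $\End^0(B)=F'$ is not CM, but $sG_B$ is a $\rat$-form of $\U(2,1)$ with abelianization $T^{F',1}$; a surjection $sG_B\twoheadrightarrow sG_A=T^{F,1}$ exists iff $T^{F,1}\sim T^{F',1}$, i.e.\ iff $F\iso F'$, and one checks the converse by exhibiting $\gu(h)\xrightarrow{\det_{F'}}T^{F'}\to T^F$ and verifying it carries the Hodge cocharacter of $B$ to that of $A$ up to isogeny, so that $\dim\mt(A\times B)=\dim\mt(B)$; again $B$ is essentially torsion finite for $A$ because $sG_A$ is a torus while $sG_B$ is noncommutative. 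Case (c) is literally an application of Theorem~\ref{Thm: main_thm_CM_pair}, and both outcomes occur. The residual CM-surface-versus-(CM or $\mathrm{IV}(1,1)$)-threefold and non-CM $\mathrm{IV}(1,1)$-surface-versus-$\mathrm{IV}(1,1)$-threefold pairs fall to the same rank and torus-quotient arguments.

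The step I expect to be the main obstacle is the exhaustive bookkeeping over the Moonen--Zarhin table together with pinning down the \emph{precise} Hodge groups in the type IV cases: the entire dichotomy between the exceptional pairs (a)--(c) and all the rest turns on the half-twist, i.e.\ on whether $sG$ of a type IV variety is a perfect form of $\SU$ or a form of $\U$ with a one-dimensional torus quotient. A secondary technical point is the character-theoretic input for the tori $T_\Phi$: both the $\sum_{\varphi\in\Phi}\chi_F(\varphi)$ computation used in (a), and the claim (needed to rule out a $(2,2)$ exception) that the two non-conjugate CM types of a non-Galois quartic CM field yield non-isomorphic Mumford--Tate tori.
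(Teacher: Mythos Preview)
Your enumeration of residual cases is incomplete, and the omission is exactly where the real work lies. After your abstract Goursat dichotomy (both $sG$'s $\rat$-simple, or torus versus perfect), what remains is every pair where both Hodge groups are either tori or $\U$-forms. In dimensions $\le 3$ the type~IV Hodge groups are tori for $(1,\mathrm{IV}(1,1))$, $(2,\mathrm{IV}(2,1))$, $(3,\mathrm{IV}(3,1))$ and a $\U_F(3)$-form for $(3,\mathrm{IV}(1,1))$. Your list ``(a), (b), (c) together with $(\dim A,\dim B)\in\{(1,2),(2,2),(2,3)\}$'' omits the $(3,3)$ pairs $[(3,\mathrm{IV}(1,1)),(3,\mathrm{IV}(1,1))]$ and $[(3,\mathrm{IV}(1,1)),(3,\mathrm{IV}(3,1))]$. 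These are precisely the two cases the paper treats by hand, and the first is the hardest step in the entire proof: when both $sG_i\cong\U_{F_i}(3)$, one can have $F_1\cong F_2$ without $A_1\sim A_2$, so neither your ``both simple'' nor your ``torus-quotient'' argument applies. The paper has to argue carefully that if $\dim H_{12}=0$ then the simply connected cover $\su_F(3)$ lifts to $sG_{12}$, pin down $sG_{12}$ up to isomorphism (either $\su_F(3)\times\U_F(1)$ or $\U_F(3)$), and check that in either case the two projections $sG_{12}\to sG_i$ induce isomorphic representations, forcing $A_1\sim A_2$ via Lemma~\ref{L:MT isomorphic}. Your sketch gives no indication of how to handle this. (Also, there is no absolutely simple $(2,\mathrm{IV}(1,1))$: a geometrically simple abelian surface in characteristic zero admits no imaginary quadratic action, as the paper notes.)

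Your overall approach also differs from the paper's in a way worth flagging. The paper does not run a Goursat analysis on each pair; instead it invokes existing product results of Ichikawa \cite{Ichikawa-Hg-gp-AV} and Moonen--Zarhin \cite{Moonen-Zarhin-Hg-cls-av-low-dim} (their Theorem~IA, Proposition~3.8, (5.4)--(5.5), Theorem~0.2(iv)) which directly give $\smt(A\times B)=\smt(A)\times\smt(B)$ and hence mutual essential torsion finiteness by Corollary~\ref{C:hodge group is product}. For the exceptional cases (a) and (b) the paper again cites Moonen--Zarhin's Proposition~3.8 rather than computing with characters. Your direct route is not wrong in spirit, but it reproves results already in the literature and, in the ``both $\rat$-simple'' step, you assert that an isogeny $sG_A\sim sG_B$ forces the standard representations to match; that is not automatic and would itself require a case check (different faithful irreducible representations of the same group do occur). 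Citing the existing product theorems is both shorter and safer.
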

\begin{proof}
Our proof contains two parts.  In the first part, we will assume that the pair $(A,B)$ is \emph{not} one of the cases (a), (b), or (c). The analysis of the special situations is carried out in the second part. 

Recall that, if 
\begin{equation}\label{Eqn: smt_prod=prod_smt}
    \smt(A\times B)=\smt(A)\times \smt(B),
\end{equation}
then $A$ and $B$ are mutually torsion finite (Corollary \ref{C:hodge group is product}).
Since  $\dim A\leq \dim B$, \eqref{Eqn: smt_prod=prod_smt} holds in each of the following cases.
\begin{enumerate}
    \item Suppose both $A$ and $B$ are of odd relative dimension, and they are not both of type IV.  Then \cite[Theorem~IA]{Ichikawa-Hg-gp-AV} states that \eqref{Eqn: smt_prod=prod_smt} holds.
    \item Suppose $A$ is a CM elliptic curve and condition (a) does not hold. Then \eqref{Eqn: smt_prod=prod_smt} is a consequence of \cite[Proposition~(3.8)]{Moonen-Zarhin-Hg-cls-av-low-dim}.  In particular, \eqref{Eqn: smt_prod=prod_smt} holds if $B$ is a surface (since a geometrically simple abelian surface in characteristic zero does not admit an action by a quadratic imaginary field -- this result of Shimura informs \cite[(2.2)]{Moonen-Zarhin-Hg-cls-av-low-dim}) or a non-type IV threefold.
    \item If $\dim A=\dim B=2$, then \eqref{Eqn: smt_prod=prod_smt} follows from \cite[(5.4),(5.5)]{Moonen-Zarhin-Hg-cls-av-low-dim}.
    \item If $\dim A=2$ and $\dim B=3$, then \eqref{Eqn: smt_prod=prod_smt} follows from \cite[Theorem(0.2)(iv)]{Moonen-Zarhin-Hg-cls-av-low-dim}.
\end{enumerate}

Hence we are left with two situations to discuss. For expository ease, we will let $A_1=A$ and $A_2=B$ in the following discussion. 

\textbf{Case 1.} 
Suppose the pair is of type [(3,IV(1,1)), (3,IV(1,1))], and that $A_1$ is potentially torsion infinite for $A_2$; we will
show that $A_1$ and $A_2$ are geometrically isogenous.  We start by
describing the Mumford--Tate groups of each $A_i$ although ultimately
we will analyze their \emph{special} Mumford--Tate groups, in order to
exploit the fact that isogenous one-dimensional algebraic tori are
actually isomorphic.  Recall that if $G$ is a reductive group with
derived group $G'$ and connected center $Z $, then $Z$ is a torus and $G$ is canonically isomorphic to $G'\times Z/(G'\cap Z)$.

For $i = 1,2$, the endomorphism algebra $F_i := \End^0(A_i)$ is an imaginary
quadratic field.  The Mumford--Tate group $G_i$ of $A_i$ is a unitary
similitude group in three variables attached to the quadratic
extension  $F_i/\rat$, which we denote
$\gu_{F_i}(3)$, and the Hodge group $sG_i$ is the unitary group
$\U_{F_i}(3)$.  The center of $G_i$ is $T^{F_i} = \res_{F_i/\rat}\gp_m$;
the connected center $Z_i$ of $sG_i$ is the norm one torus $T^{F_i,1} =\res^{(1)}_{F_i/\rat}\gp_m \iso  \U_{F_i}(1)$; and we have exact sequences
\[
\xymatrix{
1 \ar[r] & \su_{F_i}(3)\ar[d]^= \ar[r] & sG_i \iso \U_{F_i}(3) \ar@{^(->}[d] \ar[r]^{\ \ \ \ \ \ \det}
&T^{F_i,1}\ar[r] \ar@{^(->}[d]& 1\\
1 \ar[r] & \su_{F_i}(3)  \ar[r] & G_i \iso \gu_{F_i}(3) \ar[r] & T^{F_i} \ar[r] & 1}
\]
The restriction $\delta_i := \det|_{Z_i}$ is $[3]_{Z_i}$, the cubing map.
Moreover, $H_1(A_i,\rat)$ is the standard
representation of $G_i$ (see, e.g.,
\cite[(2.3)]{Moonen-Zarhin-Hg-cls-av-low-dim}).

Note that $\dim G_1 = \dim G_2$.  Under the assumption that $A_1$ is
potentially torsion infinite for $A_2$, we have $\dim G_{12} = \dim
G_2$ (Theorem \ref{T:A abs simp}).  Therefore $\dim G_{12} = \dim G_1$ as well, and thus
$A_1$ and $A_2$ are mutually potentially torsion infinite.

The isogenies $\pi_i: G_{12} \to G_i$ induce isomorphisms of Lie
algebras $\mathfrak g_{12} \to \mathfrak g_{i}$.  We thus have an
isomorphism of $\rat$-Lie algebras
$\mathfrak{gu}_{F_1}(3) \iso \mathfrak{gu}_{F_2}(3)$, and so
$F_1 \iso  F_2$.  We relabel this common quadratic field $F$ and proceed.

For each $i$, the inclusion $H_1(A_i,\rat)\hookrightarrow H_1(A_1\times A_2,\rat)$ is $F$-linear.  Therefore, we have commutative diagrams
\[
\vcenter{\xymatrix{
& G_{12} \ar@{->>}[dd]^{\pi_i} \\
T^F \ar@{^(->}[ur] \ar@{^(->}[dr] \\
& G_i
}}
\text{\quad and \quad }
\vcenter{\xymatrix{
& sG_{12} \ar@{->>}[dd]^{\pi_i} \\
T^{F,1} \ar@{^(->}[ur] \ar@{^(->}[dr] \\
& sG_i
}}
\]
where the
right-hand diagram is the restriction of the left-hand diagram to
Hodge groups.

Fix some $i$, and consider the isogeny of Hodge groups $\pi_i: sG_{12}
\to sG_i$.  Let $M_i = \pi_i^{-1}(\su_F(3))^\circ$.  Since $\su_F(3)$
is simply connected, $M_i \iso \su_F(3)$ maps isomorphically onto its
image.  Let $d_{12}\colon sG_{12} \to sG_{12}/M_i$ be the projection.  The quotient $sG_{12}/M_i$ is a smooth geometrically connected
group which is isogenous to the one-dimensional torus $T^{F,1}$, and
thus is isomorphic to $T^{F,1}$.  Similarly, the connected center $Z_{12}$ of $sG_{12}$ is isomorphic to
$T^{F,1}$, and we have a commutative diagram:
\[
\xymatrix{
 & sG_{12} \ar@{->>}[r]^{d_{12}} \ar@{->>}[dd]^{\pi_i} & T^{F,1}
 \ar@{->>}[dd]^{\bar\pi_i} \\
 T^{F,1}\ar@{^(->}[ur] \ar@{^(->}[dr] &\\
& sG_i  \ar@{->>}[r]^\det & T^{F,1} 
}
\]
Let $\delta_{12}= d_{12}|_{Z_{12}}$, and note that $\ker \delta_{12} =
Z_{12} \cap \su_F(3)$. Since $\bar\pi_i \circ \delta_{12} =
\delta_1=[3]$ and $\ker[3]$ is simple,  exactly one of $\bar\pi_i$ and $\delta_{12}$ is an isomorphism.  So either:
\begin{itemize}
    \item $\delta_{12}$ is an isomorphism.  Then $Z_{12}\cap \su_F(3)
      = \set{1}$, and so $sG_{12} \iso \su_F(3)\times \U_F(1)$, and
      $\pi_i$ is the canonical projection; or

    \item $\bar\pi_i$ is an isomorphism. Then $Z_{12}\cap \su_F(3) = \ker[3]$, $sG_{12} \iso \U_F(3)$, and $\pi_i$ is an isomorphism.
\end{itemize}
Of course, the isomorphism class of $sG_{12}$ is independent of the
choice of $i$; and we have seen that each $\pi_i$ is determined, up to
isomorphism, by the isomorphism class of $sG_{12}$.  Therefore,
$sG_{12} \to sG_i \to \GL_{V_i}$ is independent of $i$, and so $A_1$
and $A_2$ are isogenous (Lemma \ref{L:MT isomorphic}).  (After the fact, using Lemma \ref{L:MT isomorphic}(c), we
recognize that the second case happens, i.e., that $sG_{12} \iso U_F(3)$.)

\textbf{Case 2.} If the pair is of type $[(3, {\rm IV}(1,1)), (3, {\rm IV}(3,1))]$, then the Hodge group of $A_1$ has been explained in Case 1.  Note in particular that the center of $\smt(A_1)$ is $U_{F_1}(1)$, a one-dimensional torus.

Now consider $A_2$.  It has complex multiplication by a CM field $E_2$.  Since $\dim A_2 = 3$ is prime, the CM type is nondegenerate, i.e., $\dim \smt(A_2) = 3$.

In particular, there is no isogeny from the center of $\smt(A_1)$ to $\smt(A_2)$.  By \cite[Lemma 3.6]{Moonen-Zarhin-Hg-cls-av-low-dim}, $A_1$ and $A_2$ satisfy \eqref{Eqn: smt_prod=prod_smt}, and thus are mutually essentially torsion finite. This finishes the first part of the proof.

It remains to discuss cases (a), (b), and (c).  Of course, there is nothing to prove for case (c). As for (a) and (b), since 
\[\dim \mt(A_1\times A_2)\geq \dim \mt(A_2)>2=\dim \mt(A_1),\]
we immediately deduce that $A_2$ is essentially torsion finite for $A_1$. 

Moreover, by \cite[Proposition~(3.8)]{Moonen-Zarhin-Hg-cls-av-low-dim}, $\smt(A_1\times A_2) = \smt(A_1)\times \smt(A_2)$ if and only if there is no embedding $\End^0(A_1)\hookrightarrow \End^0(A_2)$; and this is equivalent to the essential torsion finiteness of $A_1$ for $A_2$ (Corollary~\ref{C: TF cm elliptic curve}).
 \end{proof}

\bibliographystyle{amsalpha}
\bibliography{draft}

\end{document}